\documentclass[a4paper, 10pt, twoside, reqno]{amsart} 
\usepackage[a4paper,inner=2cm,outer=2cm,top=2.5cm,bottom=2.5cm]{geometry}
\usepackage{amsmath,amscd}
\usepackage{amssymb}
 
\def\diag{ {\rm diag} }

\usepackage{lmodern}
\usepackage{amsthm}
\usepackage{comment}
\usepackage{mathrsfs}
\usepackage{graphicx, xcolor}
\usepackage{xcolor}
\usepackage{mathtools}
\usepackage[dvipsnames]{xcolor}
\definecolor{myred}{RGB}{220,20,60}
\definecolor{chatakred}{RGB}{255,0,0}
\usepackage[ocgcolorlinks, linkcolor=blue,citecolor=red,urlcolor=blue]{hyperref}
\usepackage{bm}
\usepackage{bbm}
\usepackage{url}
\usepackage[utf8]{inputenc}
\usepackage{mathtools,amssymb}
\usepackage{yhmath}
    \usepackage{amssymb}
\usepackage{tikz}
\usepackage{dsfont}
\usepackage{relsize}
\usepackage{url}
\usepackage{xcolor}
\usepackage{graphicx}
\usepackage{mathrsfs}
\usepackage[shortlabels]{enumitem}
\usepackage{lineno}
\usepackage{amsmath}
\usepackage{enumitem}
\usepackage{amsthm} 
\usepackage{verbatim}
\usepackage{dsfont}
\usepackage[utf8]{inputenc}
\usepackage{tikz}
\numberwithin{equation}{section}

\allowdisplaybreaks
 
 \mathtoolsset{showonlyrefs}
\theoremstyle{plain}
\newtheorem{theorem}{Theorem}[section]
\newtheorem{lemma}[theorem]{Lemma}
\newtheorem{corollary}[theorem]{Corollary}

\newtheorem{proposition}[theorem]{Proposition}

\newtheorem{remark}[theorem]{Remark}
\allowdisplaybreaks[0]
\usepackage{blindtext}
\usepackage{cleveref}
\renewcommand{\O}{\Omega}
\renewcommand{\o}{\omega}
\title[Stability result for reaction-diffusion-convection equation] {
Stable determination of time-dependent coefficients in a
reaction-diffusion-convection system}
\date{}
\author[Bhardwaj and Kumar]{Rahul Bhardwaj{$^{*}$} and Parveen Kumar$^{\diamond}$}
\address{{$^{*}$ Department of Mathematics, Indian Institute of Technology Ropar, Rupnagar- 140001, Punjab, India.
		\newline
		\indent E-mail:{\tt\  bhardwaj161067@gmail.com}}}
        \address{$^{\diamond}$ Department of Mathematics, Indian Institute of Technology Ropar, Rupnagar- 140001, Punjab, India.
	\newline\indent E-mail:{\tt \ parveen.24maz0013@iitrpr.ac.in}}
    \thanks{{\bf Mathematics Subject Classification (2020)}: Primary: 35R30, 35K20; Secondary: 35K45, 35R25, 35B30.}
    \thanks{{\bf Key words and phrases.}  Inverse problem, reaction-diffusion-convection system, uniqueness, geometric optics solutions, stability, Carleman estimate.}

\DeclareMathOperator{\supp}{supp} %support
\begin{document}
	\begin{abstract}
In this manuscript, we investigate an inverse boundary value problem for a reaction-diffusion-convection system in a bounded domain of $\mathbb{R}^{1+n}$, $n\geq 2$. We aim to obtain a stability estimate for determining the time-dependent convection coefficient and matrix-valued potential from boundary measurements represented by the Dirichlet-to-Neumann map. We consider a partial data setting in which the measurements are available only on a subset of the lateral boundary that slightly exceeds one-half of the boundary.
We first establish the well-posedness of the associated initial-boundary value problem. Subsequently, by combining Carleman estimates with suitable geometric optics solutions, we derive stability estimates for the unknown coefficients. More precisely, we prove a double logarithmic ($\log$-$\log$) stability estimate for the time-dependent convection coefficient from the knowledge of the partial Dirichlet-to-Neumann map. This stability result is then employed to recover the matrix-valued potential, yielding a triple logarithmic ($\log$-$\log$-$\log$) stability estimate for the zeroth-order coefficient.
	\end{abstract}
	\maketitle
\section{Introduction}
\subsection{Mathematical formulation and problem of interest}\label{Mathematical formulation} 
Let $T > 0$ be fixed and let $\Omega \subset \mathbb{R}^n$ $(n \geq 2)$ be a bounded open set with smooth boundary $\partial \Omega$. We define the cylindrical space-time domain $\Omega_T := (0,T) \times \Omega$, and denote its lateral boundary by $\partial \Omega_T := (0,T) \times \partial \Omega$. Throughout the article, we use boldface notation to denote spaces of vector-valued functions, with each component belonging to the corresponding underlying function space. For instance, $\mathbf{L}^2(X)$ denotes the space of vector-valued functions on a measurable set $X$ whose components belong to $L^2(X)$. 
Also for any function $w: \O_T \rightarrow\mathbb{R}$, we denote by $\partial_t w$, the partial derivative of $w$ with respect to time variable $t$  and by $\nabla_x w$, the gradient operator with respect to space variable $x$ which is given by $\nabla_x w:=(\partial_{x_1}w,\partial_{x_2}w, \cdots , \partial_{x_n}w)$.
In the present article, we consider the following linear reaction-diffusion-convection operator 
\begin{align}
 \label{Operator L_{A,Q}}
  \hspace{-.6cm} \displaystyle     \mathcal{L}_{A,Q}\overrightarrow{u}(t,x) &:= \begin{bmatrix}
 \partial_t u_1(t,x)-  \displaystyle \sum_{j=1}^{n}(\partial_j+ A^1_j(t,x))^2u_1(t,x)+ \sum_{j=1}^k q_{1j}(t,x) u_j(t,x)
\\ 
\partial_t u_2(t,x)- \displaystyle  \sum_{j=1}^{n}(\partial_j+ A^2_j(t,x))^2u_2(t,x)+ \sum_{j=1}^k q_{2j}(t,x) u_j(t,x)\\
\vdots\\
\partial_t u_k(t,x)-  \displaystyle \sum_{j=1}^{n}(\partial_j+ A^k_j(t,x))^2u_k(t,x)+ \sum_{j=1}^k q_{kj}(t,x) u_j(t,x)
\end{bmatrix}, \quad (t,x)\in \Omega_T,
\end{align} where   $\overrightarrow{u}:= \left(u_1, u_2, \cdots, u_k\right)^T$ represents the flow field, $A$  and $Q$ represent the convection term and the matrix-valued potential respectively.
For the convection term 
\(
    A(t,x):= \operatorname{diag}\bigl(\overrightarrow{A}^1(t,x),\overrightarrow{A}^2(t,x), \ldots, \overrightarrow{A}^k(t,x)\bigr),
\)
we assume that each vector field \(\overrightarrow{A}^i(t,x):=\bigl(A^i_1(t,x), A^i_2(t,x), \ldots, A^i_n(t,x)\bigr)\), for \(1\leq i\leq k\), belongs to \(H^{\ell}(\Omega_T;\mathbb{R}^n)\). Also, for
the matrix-valued potential 
\(
    Q(t,x) := \bigl(q_{ij}(t,x)\bigr)_{1\leq i,j\leq k},
\)
we assume \(q_{ij}(t,x) \in H^{\ell-1}(\Omega_T;\mathbb{R})\) for all \(1\leq i,j\leq k\), where 
\(
    \ell > \frac{1+n}{2} + 3.
\)
Next, for two vectors  $\overrightarrow{u}(t,x):=(u_1(t,x),u_2(t,x),\cdots,u_k(t,x))^T$ and $\overrightarrow{v}(t,x):=(v_1(t,x),v_2(t,x),\cdots,v_k(t,x))^T$, $(t,x)\in\Omega_T$, we define 
\begin{align*}
    \partial_t \overrightarrow{u}:=\begin{bmatrix}
    \partial_t u_1\\\partial_t u_2 \\ \vdots \\\partial_t u_k
\end{bmatrix},\; \nabla_x\overrightarrow{u}:=\begin{bmatrix}
    \nabla_x u_1\\\nabla_x u_2 \\ \vdots \\\nabla_x u_k
\end{bmatrix} ,\;\Delta_x\overrightarrow{u}:=\begin{bmatrix}
    \Delta_x u_1\\\Delta_x u_2 \\ \vdots \\\Delta_x u_k
\end{bmatrix}\quad \text{and}\quad \overrightarrow{u}\cdot \overrightarrow{v}=\sum_{j=1}^k u_i v_i.
\end{align*}
 In view of the above notations, the operator $\mathcal{L}_{A,Q}$ in compact form is given by 
\begin{align*}
\mathcal{L}_{A,Q}\overrightarrow{u}=\partial_t \overrightarrow{u}-\Delta_x \overrightarrow{u}-2A\cdot\nabla_x\overrightarrow{u}-(\nabla_x\cdot A)\overrightarrow{u}- A^2\overrightarrow{u}+Q\overrightarrow{u},
\end{align*}
where
\(
    A\cdot \nabla_x \overrightarrow{u}:=\left(\overrightarrow{A}^{1}\cdot\nabla_x {u}_1,\overrightarrow{A}^{2}\cdot\nabla_x {u}_2,\cdots,\overrightarrow{A}^{k}\cdot\nabla_x{u}_k\right)
\text{ and }
    A^2:=\diag \left(\lvert \overrightarrow{A}^1\rvert^2, \lvert \overrightarrow{A}^2\rvert^2, \cdots, \lvert \overrightarrow{A}^k\rvert^2\right).
\)
In this article, we consider a coefficient identification  inverse problem related to the following initial-boundary value problem (IBVP) for a linear reaction-diffusion-convection equation 
 \begin{equation}{\label{PDE}}
\left\{\begin{array}{r l c}
\displaystyle \mathcal{L}_{A,Q}\overrightarrow{u}(t,x) &= \overrightarrow{0},\quad& (t,x)\in \Omega_T,\\[2pt]
\overrightarrow{u}(0,x)&= \overrightarrow{0}, \quad& x\in \Omega,  \\[2pt]
\overrightarrow{u}(t,x)&= \overrightarrow{F}(t,x),\quad&  (t,x)\in \partial\Omega_T .
\end{array}\right.
\end{equation}
Before proceeding to the main results, we recall the well-posedness of the forward problem associated with IBVP \eqref{PDE}. To formulate the existence and uniqueness theorem for the IBVP \eqref{PDE}, as well as the corresponding boundary operators, we first introduce a few  notations and function spaces, which  will be used throughout this article. 
Motivated by \cite{Bellassoued_2021, senapati2021stability}, we define, for $m>0$, the admissible set of convection coefficients $A$ and matrix-valued potentials $Q$ as follows:
 \begin{align*}
 \mathcal{A}_m := \bigg\{(A, Q) : \text{for all } 1 \leq i,j \leq k,\ \overrightarrow{A}^{\,i} \in \mathbf{H}^{\ell}(\Omega_{T}),\ q_{ij} \in H^{\ell-1}(\Omega_{T}),\text{ where } \ell > \dfrac{n+1}{2} + 3,\ \text{and}\\[4pt]
\big\lVert A \big\rVert_{\mathbf{H}^{\ell}(\Omega_{T})} + \big\lVert Q \big\rVert_{\mathbf{H}^{\ell-1}(\Omega_{T})} \leq m \bigg\}.
 \end{align*}
 Here,
 \begin{align*}
   \lVert A\rVert_{\mathbf{H}^{\ell}(\O_T)}:= \left(\sum_{i=1}^k \lVert \overrightarrow{A}^i \rVert_{\mathbf{H}^{\ell}(\O_T)}^2\right)^{1/2} \text{ and } \;\; \lVert Q \rVert_{\mathbf{H}^{\ell-1}(\O_T)}:=\left(\sum_{i,j=1}^k \lVert q_{ij} \rVert_{H^{\ell-1}(\O_T;\mathbb{R})}^2\right)^{1/2}.  
 \end{align*}
 Next, for any non-negative real number $r,s$ and for $U = \O$ or $U = \partial\O$, we define the time-dependent Sobolev spaces  by
  \begin{align*}
      H^{r,s}((0,T)\times U)&:= L^2(0,T: H^r(U)) \cap H^s(0,T; L^2(U)),
  \end{align*}
  and
  \begin{align*}
      {}_{0}H^{r,s}(\partial \O_T)&:=\{u\in H^{r,s}(\partial \O_T);u(0,x)=0, x\in \O\}. 
  \end{align*}
 Both the Sobolev spaces $H^{r,s}((0,T)\times U)$ and ${}_{0}H^{r,s}(\partial \O_T)$ defined above are  equipped with the following norm
  \begin{align}
      \lVert u \rVert_{H^{r,s}((0,T)\times U)}^2= \lVert u \rVert^2_{L^{2}(0,T;H^{r}( U))}+\lVert u \rVert^2_{H^{s}(0,T; L^2(U))}.
  \end{align}
Now it follows from Theorem \ref{existence in forward pde} (see below)  that the IBVP \eqref{PDE} admits a unique solution $\overrightarrow{u}\in \mathbf{H}^{2,1}(\Omega_T)$ whenever  $\overrightarrow{F}\in{}_{0}\mathbf{H}^{\frac{3}{2},\frac{3}{4}}(\partial \O_T)$. Moreover, the solution $\overrightarrow{u}$ satisfies the following estimate
\begin{align*}
  \lVert \overrightarrow{u}\rVert_{\mathbf{H}^{2,1}(\Omega_T)}  \leq C \lVert\overrightarrow{F}\rVert_{{}_{0}\mathbf{H}^{\frac{3}{2},\frac{3}{4}}(\partial \O_T)},  
\end{align*}
where $C$ depends only on $\Omega,\ T$ and $m$.
In addition, we analyze the well-posedness of the problem for a more general operator in which, instead of a diagonal matrix, a full matrix is employed in the convection term (see below Theorem \ref{existence in forward pde} for more details). Finally, with the help of Theorem \ref{existence in forward pde}, we observe that the operator  $\mathcal{N}_{A,Q}$ given by 
\begin{align}\label{Weak form of DN map}
\begin{aligned}
    &\Big\langle \mathcal{N}_{A,Q}\overrightarrow{u},\overrightarrow{w}\rvert_{\partial \O_T}\Big\rangle\\[4pt]& :=\displaystyle \int_{\O_T} \left(-\overrightarrow{u}\cdot\overline{\partial_t \overrightarrow{w}}+\nabla_x \overrightarrow{u}\cdot \overline{\nabla_x \overrightarrow{w}}+2 A\overrightarrow{u}\cdot \overline{\nabla_x \overrightarrow{w}}+ (\nabla_x\cdot A) \overrightarrow{u}\cdot \overline{\overrightarrow{w}}- A^2\overrightarrow{u}\cdot \overline{\overrightarrow{w}}+ Q\overrightarrow{u}\cdot \overline{\overrightarrow{w}}\right)\ dx dt,
    \end{aligned}
\end{align}
is well-defined for $\overrightarrow{w} \in \mathbf{H}^{2,1}(\Omega_T)$ such that $\overrightarrow{w}(T,x)=0$, for $x \in \O$ and  $\overrightarrow{u}\in \mathbf{H}^{2,1}(\Omega_T)$, a solution to the IBVP \eqref{PDE}. Now, using integration by parts, we have
\begin{align*}
    \Big\langle \mathcal{N}_{A,Q}\overrightarrow{u},\overrightarrow{w}\rvert_{\partial \O_T}\Big\rangle&=\displaystyle \int_{\O_T} \mathcal{L}_{A,Q}\overrightarrow{u}\cdot \overline{\overrightarrow{w}}\ dx dt-\int_{\O} \left(\overrightarrow{u}(T,x)\cdot \overline{\overrightarrow{w}}(T,x)-\overrightarrow{u}(0,x)\cdot \overline{\overrightarrow{w}}(0,x)\right)  dx\\[2pt]&\qquad +\int_{\partial \O_T} \partial_\nu \overrightarrow{u}\cdot \overline{\overrightarrow{w}} \ dS_x dt+\int_{\partial \O_T} 2\left(\nu\cdot A\right) \overrightarrow{u}\cdot \overline{\overrightarrow{w}} \
     dS_x dt\\[2pt]
     &=\int_{\partial \O_T} \left (\partial_\nu \overrightarrow{u}+ 2\left(\nu\cdot A\right) \overrightarrow{u}\right)\cdot \overline{\overrightarrow{w}} \
     dS_x dt.
\end{align*}
This implies the following identity,  whenever $\overrightarrow{u}$ is a solution to the IBVP \eqref{PDE} 
\begin{align}
    \mathcal{N}_{A,Q}\overrightarrow{u}=\left(\partial_\nu \overrightarrow{u} + 2\left(\nu\cdot A\right) \overrightarrow{u}\right)\big|_{\partial \Omega_T},
\end{align}
 where $\nu$ denotes the  outward unit normal vector on $\partial \Omega$ and $\partial_{\nu}\overrightarrow{u}:=\left(\partial_{\nu}u_1, \partial_{\nu} u_2, \cdots, \partial_{\nu}u_k\right)^T$. Inspired by  this, we define the associated Dirichlet-to-Neumann (DN) map, denoted by \(\Lambda_{A,Q}\), as a bounded linear operator
\begin{align*}
    \Lambda_{A,Q}:{}_{0}\mathbf{H}^{\frac{3}{2},\frac{3}{4}}(\partial \O_T)\rightarrow \mathbf{H}^{\frac{1}{2},\frac{1}{4}}(\partial \O_T),
\end{align*}  by 
\begin{align}{\label{DN map}}
    \Lambda_{A,Q}(\overrightarrow{F}):= \mathcal{N}_{A,Q}\overrightarrow{u},
\end{align}
where $\overrightarrow{u}$ is a unique solution to the IBVP \eqref{PDE} corresponding to the Dirichlet boundary data $\overrightarrow{F}$.
Now, to define the partial DN map, we need to specify the region of $\partial \O$ where the boundary measurements are available. To do so, we fix  $\o_0 \in \mathbb{S}^{n-1}$ and  define the front face and back face of $\partial \O$ with respect to $\o_0$, respectively, as follows (see \cite{senapati2021stability})
\begin{align}\label{front and back face}
F(\o_0) := \{x \in \partial\Omega: \o_0\cdot\nu(x) \leq 0 \} \quad \mbox{and} \quad 
B(\o_0) := \{x \in \partial\Omega: \o_0\cdot\nu(x) \geq 0 \},
\end{align}
where $\nu(x)$ is the unit normal vector pointing outside at $x\in \partial\O$. Since the boundary information is available on just slightly more than half of the boundary, we consider $\epsilon>0$ sufficiently small and  define $\widetilde{F}$ and $\widetilde{B}$  by
\begin{align}\label{front and back face partial}
\widetilde{F}(\o_0) := \left\{x \in \partial\Omega: \o_0\cdot\nu(x) < \frac{\epsilon}{2}  \right\} \quad \mbox{and} \quad 
\widetilde{B}(\o_0) := \left\{x \in \partial\Omega: \o_0\cdot\nu(x) > \frac{\epsilon}{2}  \right\},   
\end{align}
 where $\widetilde{F}(\o_0)$ is a sufficiently small open neighborhood of $F(\o_0)$. Let us denote the partial DN map by $\Lambda^{\sharp}$ and is given by \begin{align}\label{partial dn map}
 \Lambda^{\sharp}(\overrightarrow{F}):=\left(\partial_\nu \overrightarrow{u} + 2\left(\nu\cdot A\right) \overrightarrow{u}\right)\big|_{(0,T)\times \widetilde{F}(\o_0)}.
 \end{align}
 In this article, we consider the following inverse problem.
\\[2pt]
\textbf{Problem of interest.}
To establish a stability estimate for determining the convection coefficient $A$ and the matrix-valued potential $Q$ appearing in the IBVP \eqref{PDE} from knowledge of the aforementioned partial DN map.

\subsection{Applications and physical significance.}
Systems of reaction-diffusion-convection equations arise naturally in a wide range of scientific and engineering applications, where they are used to describe the transport, diffusion, and interaction of multiple interacting species or physical quantities. Such models are widely used in chemical engineering, ecology, environmental sciences, and biomedical engineering. Representative applications include predator-prey dynamics \cite{doi:10.1142/S0219525904000056,XuWang2018}, multispecies transport in porous media \cite{KolevaVulkov2023,OliveiraBluntBijeljic2019}, river water quality modelling \cite{EfendievEberl2004,Hao2025}, and the transport and interaction of chemical substances in reactive media \cite{XuSamperAyoraManzanoCustodio1999,YuanZhao2025}. In these applications, the convection term describes the underlying transport mechanism, while the matrix-valued reaction term accounts for the coupling and interactions among different species.

From the perspective of inverse problems, recovering these coefficients from boundary measurements provides valuable information about the internal properties of the underlying medium. This has important applications in environmental monitoring, contaminant transport, chemical process control, and biomedical imaging, highlighting the practical relevance of the mathematical model studied in this work.
%%%%%%%%%%%%%%%%%%%%%%%%%%%%%%%%%%%%%%%%%%%%%%%%%%%%%%%%%%
\subsection{Existing works}
In this subsection, we briefly review some existing results related to the inverse problems considered in the present work. While the inverse coefficient problem for a single parabolic equation has been extensively investigated over the past few decades, comparatively little is known for coupled systems of parabolic equations, particularly when multiple lower-order coefficients are recovered simultaneously from partial boundary measurements. We therefore begin with the literature on single parabolic equations before discussing the available results for coupled systems.
 Consider the following IBVP
\begin{equation}\label{Convection diffusion equation}
\left\{
\begin{array}{r l c}
\big( \partial_t - \displaystyle\sum_{j=1}^{n} (\partial_j + A_j(t,x))^2 + q(t,x)\big) u(t,x) =& 0, \quad& (t,x) \in \Omega_T, \\
u(0,x) =& 0, \quad& x \in \Omega, \\
u(t,x) =& f(t,x), \quad& (t,x) \in \partial\Omega_T.
\end{array}
\right.
\end{equation}
\\
The inverse problem of recovering the coefficients in IBVP
\eqref{Convection diffusion equation} has attracted considerable attention under various assumptions on the unknown coefficients and the available measurements. We first summarize the results concerning time-independent coefficients. In the case where the convection term is absent, that is, $A=0$, the uniqueness of determining the potential $q$ from the DN map was established by Isakov \cite{isakov1991completeness}. Subsequently, Choulli \cite{Choulli_Book} derived logarithmic-type stability estimates corresponding to this uniqueness result. The inverse problem of identifying first-order coefficients in a one-dimensional parabolic equation from final-time observations was investigated by Deng et al \cite{deng2008identifying}. In two spatial dimensions, Cheng \emph{et al.} \cite{cheng2002identification} proved the unique determination of the convection coefficient from a single boundary measurement when the zeroth-order term is absent. Later, Bellassoued and Rassas \cite{Bellassoued_2020} studied a stationary convection-diffusion equation and established logarithmic stability estimates for the unknown coefficients from the associated DN map.

Inverse problems involving final-time observations have also been studied extensively. In the absence of a convection term, existence and uniqueness results for recovering coefficients from final-time measurements were obtained in
\cite{Isakov1991,PrilepkoSolovev1987,Yu1991}, where the conclusions depend on the sign of the density coefficient. These results were later extended by Choulli \cite{Choulli1994} to semilinear parabolic equations without imposing sign restrictions on the density coefficient. Furthermore, Choulli and Yamamoto \cite{ChoulliYamamoto1997} established local stability estimates for the recovery of the density coefficient in dimensions $n\leq3$.

We next discuss inverse problems for parabolic equations with space-time dependent coefficients. Choulli and Kian \cite{choulli2012stability} investigated a semilinear parabolic equation and established the recovery of a time-dependent coefficient from Neumann boundary measurements. For the case $A=0$ in IBVP \eqref{Convection diffusion equation},  Choulli and Kian  in \cite{choulli2018logarithmic} derived a logarithmic stability estimate for the potential $q$ using partial DN data measured on a suitable open subset of the boundary. Subsequently, Sahoo and Vashisth \cite{sahoo2019partial} considered the simultaneous recovery of the time-independent convection coefficient and the time-dependent potential, proving uniqueness up to a gauge transformation from partial boundary measurements. Bellassoued and Fraj in  \cite{Bellassoued_2021}  established stability estimates under the more general assumption that the measurements are available on an arbitrary open subset of boundary. Under the divergence-free condition $\nabla_x\cdot A=0$, Senapati \emph{et al.} \cite{senapati2021stability} obtained double-logarithmic and triple-logarithmic stability estimates for the convection coefficient and the potential, respectively. More recently, Mishra \emph{et al.} \cite{MishraPurohitVashisth2025} investigated partial data inverse problems for evolution equations on Riemannian manifolds and established the recovery of time-dependent first- and zeroth-order perturbations.

We also mention several works employing logarithmic Carleman weights to recover coefficients from measurements available on relatively small portions of the boundary. Fan and Duan \cite{fan2021determining} first established the unique determination of the time-dependent potential in the case $A=0$, from boundary data prescribed on a sufficiently small open subset of the boundary. This result was subsequently generalized by Purohit \cite{Purohit2024DeterminingTC}, who proved the unique recovery of both the time-dependent convection coefficient and the potential from the corresponding partial boundary measurements.
For additional results on inverse coefficient problems for parabolic equations, we refer the reader to
\cite{caro2018determination,NakamuraSasayama2013,prilepko1992inverse}
 and the references therein. 

%%%%%%%%%%%%%%%%%%%%%%%%%%%%%
We conclude this subsection by reviewing some representative results on inverse problems for coupled systems with matrix-valued coefficients. In contrast to the extensive literature available for scalar equations, inverse coefficient problems for coupled parabolic and hyperbolic systems have received comparatively less attention and remain an active area of research.
An early contribution in this direction was made by Bube and Burridge \cite{bube1983one}, who investigated the recovery of $2\times2$ matrix-valued coefficients in a coupled first-order parabolic as well as hyperbolic system using one-dimensional observations. Later, Cristofol \emph{et al.} \cite{cristofol2006inverse} studied a coupled reaction-diffusion system and established simultaneous stability results for one of the coefficients together with the initial condition by combining a global Carleman estimate with observations available on a subdomain. Subsequently, Allal \emph{et al.} \cite{allal2020lipschitz} considered an inverse source problem for a cascade system of coupled degenerate parabolic equations and proved uniqueness and stability for the recovery of the source terms from observations of a single component over a time interval together with final-time measurements of all components.
Inverse coefficient and source problems for coupled parabolic systems have since been investigated in a variety of settings. We refer to
\cite{bellassoued2017carleman, montoya2024source,ren2026uniqueness}
for representative developments, and to
\cite{ainseba2015stability,benabdallah2009inverse,cristofol2013inverse, cristofol2006inverse,cristofol2008identification, dinakar2017identification}
for further results on coupled reaction-diffusion equations.

We also refer to \cite{BellassouedChoulli2008, BukhgeimUhlmann2002, HeckWang2006, KenigSjostrandUhlmann2007, Kian2016SIAM, Kian2016, Krishnan2020} for earlier contributions concerning uniqueness and stability estimates for other classes of PDEs. 
Inverse problems for coupled systems associated with different types of PDEs have likewise attracted substantial attention. In particular, uniqueness and stability results derived from boundary measurements have been established in \cite{ben2025stable, bhardwaj2026inverse, DouYamamoto2019, filippas2025recoverymatrixvaluedpotential, khanfer2019inverse, Kumar2024, LiuTriggiani2011, Mishra10122021, ZhuDou2023} under a variety of geometric settings and measurement configurations.

Motivated by these developments, the present article investigates the inverse problem of simultaneously recovering a time-dependent matrix-valued convection coefficient and a matrix-valued potential in a coupled reaction-diffusion-convection system from partial DN data. To the best of our knowledge, the stability results established here are new for this class of coupled parabolic systems.

\subsection{Statement of the main results}
 In this subsection, we present the main results of the article, which provide stability estimates for the time-dependent convection coefficient and the matrix-valued potential from partial boundary measurements.
 
\begin{theorem}\label{main theorem}{(Stability result)} 
Let $\Omega \subset \mathbb{R}^n$ $(n \geq 2)$ be an open, bounded, and simply connected domain with smooth boundary $\partial \Omega$. Assume that $T> \operatorname{diam}(\Omega)$ and that, for $i=1,2$, the pairs $(A_{(i)},Q_{(i)})\in \mathcal{A}(m)$ satisfy
\begin{align}\label{extra condition imposed on A}
\lVert\overrightarrow{A}^j_{(2)}-\overrightarrow{A}^\ell_{(2)}\rVert_{L^2(\Omega_T)} \leq \lVert A_{(1)}-A_{(2)}\rVert_{\mathbf{L}^2(\Omega_T)} \quad \text{for all } j,\ \ell\in \{1,2,\dots,k\}.
\end{align}
Moreover, let $\Lambda^{\sharp}_i$ denote the partial DN map associated with the operator $\mathcal{L}_{A_{(i)},Q_{(i)}}$, as defined in Equation \eqref{partial dn map}. Then there exist positive constants $C(\Omega_T,m)$ and $a_i(\Omega,m)$ $(1\leq i\leq 4)$ such that the following stability estimates hold:
\begin{align}\label{stable estimate of A and Q}
\begin{aligned}
    \lVert A_{(1)}-A_{(2)}\rVert_{\mathbf{L}^2(\Omega_T)}
    &\leq C\left( \lVert \Lambda^{\sharp}_1- \Lambda^{\sharp}_2\rVert^{a_1}+\bigl\lvert \log \bigl\lvert \log  \lVert \Lambda^{\sharp}_1- \Lambda^{\sharp}_2\rVert\bigr\rvert \bigr\rvert^{-a_2}\right), \quad\text{and}\\
    \lVert Q_{(1)}-Q_{(2)}\rVert_{\mathbf{L}^2(\Omega_T)}
    &\leq C\left( \lVert \Lambda^{\sharp}_1- \Lambda^{\sharp}_2\rVert^{a_3}+\bigl\lvert \log\bigl\lvert\log \bigl\lvert \log  \lVert \Lambda^{\sharp}_1- \Lambda^{\sharp}_2\rVert\bigr\rvert \bigr\rvert\bigr\rvert^{-a_4}\right),
    \end{aligned}
\end{align}
provided that $A_{(1)}=A_{(2)}$ on $\partial \Omega_T$ and 
\(
    \nabla_x\cdot \overrightarrow{A}^i_{(1)}=\nabla_x\cdot \overrightarrow{A}^i_{(2)} 
\) in $\Omega_T$,
 for any $i\in \{1,\cdots,k\}$.
\end{theorem}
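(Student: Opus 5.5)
We will follow the strategy of Senapati \emph{et al.} for the scalar case, adapted componentwise to the system; the coupling through $Q$ is handled by choosing suitable vector-valued test data.

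\textbf{Step 1 (integral identity).} Write $A=A_{(1)}-A_{(2)}$ and $Q=Q_{(1)}-Q_{(2)}$. Let $\overrightarrow{u}_1$ solve $\mathcal{L}_{A_{(1)},Q_{(1)}}\overrightarrow{u}_1=0$ with zero initial data. Let $\overrightarrow{v}$ solve the adjoint problem $\mathcal{L}^*_{A_{(2)},Q_{(2)}}\overrightarrow{v}=0$, which is backward in time with $\overrightarrow{v}(T,\cdot)=0$. Let $\overrightarrow{u}_2$ solve the problem for $(A_{(2)},Q_{(2)})$ with the same boundary data as $\overrightarrow{u}_1$, and set $\overrightarrow{w}=\overrightarrow{u}_1-\overrightarrow{u}_2$. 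Using the weak form \eqref{Weak form of DN map}, $A=0$ on $\partial\Omega_T$, and $\nabla_x\cdot\overrightarrow{A}^i_{(1)}=\nabla_x\cdot\overrightarrow{A}^i_{(2)}$, we obtain
\[
\int_{\Omega_T}\Big(2A\cdot\nabla_x\overrightarrow{u}_1\cdot\overline{\overrightarrow{v}}+\big(A_{(1)}^2-A_{(2)}^2\big)\overrightarrow{u}_1\cdot\overline{\overrightarrow{v}}-Q\overrightarrow{u}_1\cdot\overline{\overrightarrow{v}}\Big)\,dx\,dt=\int_{\partial\Omega_T}(\Lambda_1-\Lambda_2)\overrightarrow{F}\cdot\overline{\overrightarrow{v}}\,dS_x\,dt .
\]
Split the right-hand side into the part over $(0,T)\times\widetilde F(\omega_0)$, bounded by $\lVert\Lambda^\sharp_1-\Lambda^\sharp_2\rVert$, and the part over $(0,T)\times\widetilde B(\omega_0)$. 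For the latter we need a boundary Carleman estimate for the parabolic system with linear weight $\varphi=\pm\lambda(t+x\cdot\omega)$, $\omega$ near $\omega_0$. Since the principal part is diagonal, this follows componentwise from the scalar estimate, absorbing the coupling $Q\overrightarrow{w}$ and first-order terms for large $\lambda$. It bounds $\lVert e^{-\varphi}\partial_\nu\overrightarrow{w}\rVert_{L^2((0,T)\times\widetilde B)}$ by $\lVert e^{-\varphi}\partial_\nu\overrightarrow{w}\rVert_{L^2((0,T)\times\widetilde F)}$ plus a small factor times the bulk term.

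\textbf{Step 2 (geometric optics).} Following the scalar construction, take
\[
\overrightarrow{u}_1=e^{-(\lambda^2 t+\lambda x\cdot\omega)}\big(e^{\Phi_1}\overrightarrow{a}+\overrightarrow{R}_1\big),\qquad
\overrightarrow{v}=e^{\lambda^2 t+\lambda x\cdot\omega}\big(e^{\Phi_2}\overrightarrow{b}+\overrightarrow{R}_2\big).
\]
Here $\overrightarrow{a}=\phi(t,x)e_j$ and $\overrightarrow{b}=e^{-i(t\tau+x\cdot\xi)}e_j$, with $\xi\perp\omega$. The phases satisfy $\omega\cdot\nabla_x\Phi_1=\omega\cdot\overrightarrow{A}^j_{(1)}$ and similarly for $\Phi_2$, after extending $A$ to $\mathbb{R}^{1+n}$ with $H^\ell$ control; $\ell>\frac{n+1}{2}+3$ gives the needed $C^3$ bounds. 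Solve for the remainders with $\lVert\overrightarrow{R}_i\rVert\le C\lambda^{-1}$ via a vector-valued version of the solvability result used earlier. Since the system couples only through $Q$, which is of lower order, the remainder equation picks up only $O(1)$ bounded coupling terms.

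\textbf{Step 3 (convection coefficient).} Multiply the identity by $\lambda^{-1}$ and let $\lambda\to\infty$. With $T>\operatorname{diam}\Omega$, this yields a bound on
\[
\int_{\mathbb{R}}\int_{\mathbb{R}^n}\omega\cdot\big(\overrightarrow{A}^j_{(1)}-\overrightarrow{A}^j_{(2)}\big)\,\exp\Big(\textstyle\int_{\mathbb{R}}\omega\cdot(\overrightarrow{A}^j_{(1)}-\overrightarrow{A}^j_{(2)})(t,x+s\omega)\,ds\Big)\,\phi\, e^{-i(t\tau+x\cdot\xi)}
\]
by $C(e^{c\lambda}\lVert\Lambda^\sharp_1-\Lambda^\sharp_2\rVert+\lambda^{-1})$. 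Hypothesis \eqref{extra condition imposed on A} is used here to control the cross terms $\overrightarrow{A}^j_{(2)}-\overrightarrow{A}^\ell_{(2)}$ that arise from the coupling remainders by $\lVert A\rVert$, so they can be absorbed. A standard choice of $\phi$ then gives estimates of the light-ray (X-ray) transform of $\omega\cdot A^j$, hence of the Fourier transform of $\xi_m A^j_\ell-\xi_\ell A^j_m$, for $\omega$ in a neighbourhood of $\omega_0$.

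The difficulty is that only an open cone of directions is available. We would use analytic continuation estimates for the Fourier transform, exploiting the compact support of the coefficients, to pass from that cone to a ball. This, followed by optimisation in $\lambda$, is the step that produces the $\log$-$\log$ rate.

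Finally, $\nabla_x\cdot A=0$ together with $A=0$ on the boundary lets us recover $A$ from its curl (Hodge decomposition on simply connected $\Omega$). Interpolation with the $H^\ell$ bound $m$ then gives the first inequality in \eqref{stable estimate of A and Q}.

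\textbf{Step 4 (potential).} Return to the identity at order $\lambda^0$, choosing $\overrightarrow{a}=\phi e_j$ and $\overrightarrow{b}=e^{-i(\cdot)}e_i$ to isolate each $q_{ij}$. The terms involving $A$ are bounded by the estimate of Step 3. This gives the light-ray transform of $q_{ij}$ up to an error of $\log$-$\log$ size, and repeating the continuation argument adds one more logarithm, yielding the second inequality.

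The main obstacle is the analytic continuation step combined with the Carleman absorption on $\widetilde B$ for the coupled system.
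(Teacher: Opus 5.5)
Your architecture matches the paper's: integral identity, back-face boundary Carleman estimate for the system, GO solutions from the $H^{-1}$--$H^{-2}$ solvability, Fourier bounds on a cone of frequencies, analytic continuation, then frequency splitting and parameter balancing. However, Step 4 has a genuine gap. It sits exactly where hypothesis \eqref{extra condition imposed on A} is needed, and you spend that hypothesis in Step 3 instead.

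The transport phases depend on the component. Put the amplitude of the forward solution in component $j$ and that of the adjoint solution in component $i\neq j$. The $\lambda^0$ term is then $\int_{\Omega_T} q_{ij}\,\Xi_\varrho^2\,e^{-\mathrm{i}(t\tau+x\cdot\xi)}\,W\,dx\,dt$ with $W=\exp\bigl(\pm\int_0^\infty\omega\cdot(\overrightarrow{A}^j_{(1)}-\overrightarrow{A}^i_{(2)})(t,x+s\omega)\,ds\bigr)$. Split $\overrightarrow{A}^j_{(1)}-\overrightarrow{A}^i_{(2)}=(\overrightarrow{A}^j_{(1)}-\overrightarrow{A}^j_{(2)})+(\overrightarrow{A}^j_{(2)}-\overrightarrow{A}^i_{(2)})$; the last field is arbitrary and need not be small. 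So $W$ is neither close to $1$ nor constant along the lines $x+\mathbb{R}\omega$. You therefore obtain neither the light-ray transform nor the Fourier transform of $q_{ij}$, and nothing from Step 3 controls the discrepancy.

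The paper closes the gap with \eqref{extra condition imposed on A}. It bounds $|W-1|$ by $C\int_0^\infty|\overrightarrow{A}^j_{(1)}-\overrightarrow{A}^i_{(2)}|(t,x+s\omega)\,ds$, splits the field as above, and controls the result by $C\|A_{(1)}-A_{(2)}\|_{\mathbf{L}^2(\Omega_T)}$. This gives $|\mathcal{F}(\Xi_\varrho^2\widetilde{q}_{ji})(\tau,\xi)|\le C\varrho^{-2}\langle\tau,\xi\rangle_*^3\bigl(\sqrt{\lambda}\,\|A\|_{\mathbf{L}^2(\Omega_T)}+\lambda^{-1/2}+e^{\vartheta_2\lambda^2}\mathcal{E}\bigr)$ on $\mathbb{R}\times\mathcal{P}$, where $\mathcal{E}=\|\Lambda^\sharp_1-\Lambda^\sharp_2\|$. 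Analytic continuation plus the $\log$-$\log$ bound for $A$ then give the third logarithm.

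In Step 3 the hypothesis plays no role. With both amplitudes in the same component $i$, the weight involves only $\overrightarrow{A}^i_{(1)}-\overrightarrow{A}^i_{(2)}$. All other contributions, including the $Q$-coupling and the remainder terms, are $O(\varrho^{-2}\langle\tau,\xi\rangle_*^4)$ and become negligible after division by $\lambda$. There are no cross terms $\overrightarrow{A}^j_{(2)}-\overrightarrow{A}^\ell_{(2)}$ to absorb.

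Some smaller corrections follow.

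\begin{itemize}
\item The phases in Step 2 are reversed in time. With $\psi=\lambda^2t+\lambda x\cdot\omega$ one has $e^{\psi}(\partial_t-\Delta_x)e^{-\psi}=\partial_t-\Delta_x+2\lambda\,\omega\cdot\nabla_x-2\lambda^2$, so $e^{-\psi}$ cannot be the phase of a solution of the forward equation. The forward solution must carry $e^{\phi}$ and the backward adjoint solution $e^{-\phi}$. This is consistent with the Carleman weight $e^{-\phi}$, which trades $B(\omega)$ for $F(\omega)\subset\widetilde{F}(\omega_0)$ for $\omega$ near $\omega_0$.
\item As a result, the Dirichlet data have size $e^{c\lambda^2}$, and the back-face term yields only $\lambda^{-1/2}$. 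Your error is therefore $\lambda^{-1/2}+e^{c\lambda^2}\mathcal{E}$ rather than $\lambda^{-1}+e^{c\lambda}\mathcal{E}$. The $\log$-$\log$ rate survives.
\item With a linear weight, the diagonal first-order terms are not absorbed just by taking $\lambda$ large. The conjugated term $\lambda(\omega\cdot\overrightarrow{A}^i)w_i$ is of the same order as the left-hand side. The paper convexifies $\phi$ and takes $\sigma$ large. Quoting the scalar magnetic estimate componentwise and absorbing only the zero-order coupling also works.
\item Your curl-plus-Hodge recovery of $A$ is a legitimate alternative to Lemma~\ref{Lemma 5.1}, which instead inserts $\xi\cdot\widehat{\Xi_\varrho^2\partial_j\overrightarrow{A}^{i}}=0$ directly into the linear system on the cone.
\item The paper's gradient amplitude $\frac{\xi}{|\xi|}\cdot\nabla_x\bigl(e^{-\mathrm{i}(t\tau+x\cdot\xi)}e^{\int_{\mathbb{R}}\omega\cdot\overrightarrow{A}^i(t,x+s\omega)\,ds}\bigr)$ makes the leading term exactly $\mathrm{i}|\xi|\widehat{\Xi_\varrho^2\,\omega\cdot\overrightarrow{A}^i}(\tau,\xi)$. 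This spares you the linearization of the exponential weight that your Step 3 leaves implicit.
\end{itemize}
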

\begin{remark}
\begin{enumerate}
\item We impose an extra condition on $A$ given by Equation \eqref{extra condition imposed on A} required to establish the stability estimate for the matrix-valued potential (see Equation \eqref{condition on matrix A required for estimating Q}).
    \item 
    If one considers the operator $\mathcal{L}_{A,Q_{(i)}}$, or the operator $\mathcal{L}_{A_{(i)},Q_{(i)}}$ with $A_{(i)}$ a scalar matrix, then Equation \eqref{extra condition imposed on A} is satisfied in a straightforward manner. Moreover, if we define
\[
A_{(2)}(t,x) := \operatorname{diag}\bigl(\overrightarrow{A}^1(t,x),0,\ldots,0\bigr)
\quad\text{and}\quad
A_{(1)}(t,x) := \operatorname{diag}\bigl(0,\overrightarrow{A}^2(t,x),\ldots,\overrightarrow{A}^k(t,x)\bigr),
\]
then Equation \eqref{extra condition imposed on A} is again automatically fulfilled.  Consequently, there are multiple choices for the convection coefficients which satisfy \eqref{extra condition imposed on A}.

    \item In the scalar (single-equation) framework, as previously discussed, one can in general guarantee uniqueness in the recovery of the coefficients only modulo a gauge transformation; see, for example, \cite{Bellassoued_2021,caro2018determination,  MishraPurohitVashisth2025, sahoo2019partial} and the references therein. In contrast, under the divergence-free constraint, the coefficients become fully identifiable. This observation provides the primary motivation for imposing the divergence-free condition in our analysis.
\end{enumerate}
\end{remark}
Finally, as a corollary of Theorem \ref{main theorem}, we have the following uniqueness result, which is established without using the assumption \eqref{extra condition imposed on A} on convection term $A$.
\begin{corollary}\label{Uniqueness for main PDE}{(Uniqueness result)}
   Let $\Omega \subset \mathbb{R}^n$ $(n \geq 2)$ be an open, bounded, and simply connected domain with smooth boundary $\partial \Omega$. Assume that $T> \operatorname{diam}(\Omega)$ and that, for $i=1,2$, the pairs $(A_{(i)},Q_{(i)})\in \mathcal{A}(m)$. Moreover, let $\Lambda^{\sharp}_i$ denote the partial DN map associated with the operator $\mathcal{L}_{A_{(i)},Q_{(i)}}$, as defined in Equation \eqref{partial dn map}. Then 
$\Lambda^\sharp_1=\Lambda^\sharp_2$ implies $A_{(1)}=A_{(2)}$ in $\O_T$, provided that $A_{(1)}=A_{(2)}$ on $\partial \Omega_T$ and 
    $\nabla_x\cdot \overrightarrow{A}^i_{(1)}=\nabla_x\cdot \overrightarrow{A}^i_{(2)} \quad$ 
in $\Omega_T$, for each $1\leq i \leq k$.
\end{corollary}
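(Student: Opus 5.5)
The plan is to deduce the corollary from the first estimate in \eqref{stable estimate of A and Q} of Theorem \ref{main theorem}. That estimate is the one for $A_{(1)}-A_{(2)}$, and we must check that assumption \eqref{extra condition imposed on A} is not used to derive it. As the Remark points out, \eqref{extra condition imposed on A} enters only at the later step, where the $A$-estimate is fed into the recovery of $Q$ (see \eqref{condition on matrix A required for estimating Q}).

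I would therefore retrace the proof of the convection estimate and confirm which ingredients it needs. These are:
\begin{itemize}
\item the well-posedness result, Theorem \ref{existence in forward pde};
\item the integral identity built from the difference of the partial DN maps;
\item geometric optics solutions for $\mathcal{L}_{A_{(1)},Q_{(1)}}$ and for the adjoint of $\mathcal{L}_{A_{(2)},Q_{(2)}}$;
\item the boundary Carleman estimate, which controls the contribution of $\widetilde{B}(\o_0)$.
\end{itemize}
Because $A$ is diagonal, the leading (principal-in-$\lambda$) part of the system decouples componentwise. For each $i$ one obtains an estimate on the light-ray transform of $\overrightarrow{A}^i_{(1)}-\overrightarrow{A}^i_{(2)}$ against directions $\o$ near $\o_0$. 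The gauge is fixed by the hypotheses $\nabla_x\cdot\overrightarrow{A}^i_{(1)}=\nabla_x\cdot\overrightarrow{A}^i_{(2)}$ and $A_{(1)}=A_{(2)}$ on $\partial\O_T$, together with simple connectedness; this turns the information on the curl into the $\log$-$\log$ bound. None of these steps involves comparing different components $\overrightarrow{A}^j_{(2)}$ and $\overrightarrow{A}^\ell_{(2)}$. The off-diagonal components of $Q$ only appear in terms of order $O(\lambda^{-1})$ or lower, which are absorbed as $\lambda\to\infty$.

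With the estimate established under the present hypotheses, set $\Lambda^\sharp_1=\Lambda^\sharp_2$, so that $\lVert\Lambda^\sharp_1-\Lambda^\sharp_2\rVert=0$. The right-hand side must be read as a limit: $\lVert\cdot\rVert^{a_1}\to 0$ and $\lvert\log\lvert\log s\rvert\rvert^{-a_2}\to0$ as $s\to0^+$. Cleanest is to note that the proof actually gives, for every $\delta\ge 0$ and every large parameter $\lambda$, a bound of the form $C(\lambda^{-\alpha}+e^{e^{C\lambda}}\delta)$ with $\delta=\lVert\Lambda^\sharp_1-\Lambda^\sharp_2\rVert$. Taking $\delta=0$ and letting $\lambda\to\infty$ yields $\lVert A_{(1)}-A_{(2)}\rVert_{\mathbf{L}^2(\O_T)}=0$, hence $A_{(1)}=A_{(2)}$ in $\O_T$.

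The main obstacle is the first step: verifying rigorously that \eqref{extra condition imposed on A} is not hidden in the derivation of the $A$-estimate. The delicate point is whether the remainder terms in the geometric optics construction, which involve the coupling through $Q$, are bounded using only the a priori bound $m$ and not any relation between components. I expect this to hold, because the amplitudes can be chosen to concentrate on a single component $i$ (amplitude vector $e_i$). The remaining components then enter only through the correction term, whose norm is $O(\lambda^{-1})$ uniformly in $\mathcal{A}_m$.
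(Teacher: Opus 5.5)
Your proposal is correct and matches the paper's own proof: note that \eqref{extra condition imposed on A} is first used in \eqref{condition on matrix A required for estimating Q}, so the convection estimate \eqref{in comparing A, at last, 1 form} holds under the corollary's hypotheses, and then set $\Lambda^\sharp_1=\Lambda^\sharp_2$. Your finish is slightly cleaner than the paper's direct substitution of $\mathcal E=0$ into the $\log$-$\log$ expression: you return to the pre-optimization bound \eqref{estimate:A_with_R}, which for $\mathcal E=0$ reads $\|\overrightarrow A^{\,i}\|_{L^2(\Omega_T)}^{2/\theta}\le CR^{-2/(3\theta)}$ for all large $R$ (in the paper the large parameter is $R$, with $\lambda$ a function of $R$, and the decay is $R^{-2/(3\theta)}$, so the power-in-$\lambda$ form you wrote should be read with $R$ in place of $\lambda$), and one small slip that does not matter: the paper takes the exponentially growing solution for $\mathcal L_{A_{(2)},Q_{(2)}}$ and the decaying one for $\mathcal L^*_{A_{(1)},Q_{(1)}}$, the reverse of what you wrote.
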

\noindent The proof of Theorem~\ref{main theorem} is based on the derivation of a suitable integral identity involving solutions of the IBVP \eqref{PDE} and its adjoint problem. Combining this identity with Carleman estimates and appropriately constructed geometric optics solutions, we establish a double logarithmic ($\log$-$\log$) stability estimate for the time-dependent convection coefficient from partial boundary measurements. This result is then utilized to derive a triple logarithmic ($\log$-$\log$-$\log$) stability estimate for the matrix-valued potential, thereby completing the recovery of the lower-order coefficients.

 \subsection{Outline} 
The remainder of the paper is organized as follows. In Section~\ref{section 2}, we establish the existence and uniqueness of solutions to a more general IBVP given by \eqref{more general forward pde}. Section~\ref{Section 3} is devoted to the derivation of interior and boundary Carleman estimates; in particular, we prove an interior Carleman estimate in negative-order Sobolev spaces. In Section~\ref{Construction of solutions}, we construct geometric optics solutions for the operator $\mathcal{L}_{A,Q}$ and its adjoint operator $\mathcal{L}_{A,Q}^{*}$. Finally, Section~\ref{Proof of main theorem} is dedicated to the proof of the main theorem and the derivation of the corresponding stability estimates.

\section{Well-posedness of forward problem}\label{section 2}
In this section, we establish the well-posedness of a more general IBVP, namely the system \eqref{more general forward pde}. This result guarantees that the DN map given in Equation~\eqref{DN map} is well defined. We begin by introducing the notation and preliminary definitions used throughout this article.
Let
$M(t,x):=\bigl(\overrightarrow{M}_{ij}(t,x)\bigr)_{1\le i,j\le k}$
be a $k\times k$ matrix whose entries are vector-valued functions in $\mathbb{R}^n$, where
\(
   \overrightarrow{M}_{ij}(t,x)
:=
\bigl(
M_{ij}^{1}(t,x),
M_{ij}^{2}(t,x),
\ldots,
M_{ij}^{n}(t,x)
\bigr),
1\le i,j\le k. 
\)
For such a matrix-valued function $M$, we define
$\nabla_x\cdot M
:=
\bigl(
\nabla_x\cdot\overrightarrow{M}_{ij}
\bigr)_{1\le i,j\le k},$
and, for a scalar-valued function $v$,
$M\cdot\nabla_x v
:=
\bigl(
\overrightarrow{M}_{ij}\cdot\nabla_x v
\bigr)_{1\le i,j\le k}$.
Moreover, for a vector-valued function
$\overrightarrow{u}
=
(u_1,\ldots,u_k)^T,$
we define
\begin{align*}
    M\cdot\nabla_x\overrightarrow{u}
:=
\left(
\displaystyle\sum_{j=1}^{k}
\overrightarrow{M}_{1j}\cdot\nabla_x u_j, 
\sum_{j=1}^{k}
\overrightarrow{M}_{2j}\cdot\nabla_x u_j,
\cdots,
\displaystyle\sum_{j=1}^{k}
\overrightarrow{M}_{kj}\cdot\nabla_x u_j
\right)^T.
\end{align*}
Furthermore, for an open set $\Omega\subset\mathbb{R}^n$ and $m\in\mathbb{N}$, we denote by $H^m(\Omega)$ and $W^{m,\infty}(\Omega)$ the usual Sobolev spaces, equipped with their standard norms
\begin{equation*} \displaystyle\|u\|_{H^m(\Omega)}: =\left(\sum_{\lvert\alpha\rvert\leq m} \lVert\partial ^{\alpha} u\rVert^2_{L^2(\Omega)}\right)^{1/2}\,\text{ and }\quad \,\|u\|_{W^{m,\infty}(\Omega)}:=\max_{\lvert\alpha\rvert\leq m}\|\partial^{\alpha}u\|_{L^{\infty}(\Omega)},
\end{equation*} 
respectively, where, for a multi-index
$\alpha=(\alpha_1,\alpha_2,\ldots,\alpha_n)\in(\mathbb{N}\cup\{0\})^n$,
we use the notation
$\partial^{\alpha}
=
\partial_{x_1}^{\alpha_1}
\partial_{x_2}^{\alpha_2}
\cdots
\partial_{x_n}^{\alpha_n}$
to denote the partial differential operator of order
$|\alpha|
=
\alpha_1+\cdots+\alpha_n$.
We also introduce the vector-valued Sobolev spaces
$\mathbf{H}^m(\Omega)$ and
$\mathbf{W}^{m,\infty}(\Omega)$, endowed with the norms
\begin{align*}
\displaystyle\|\overrightarrow{u}\|_{\mathbf{H}^m(\Omega)}=\left(\sum_{i=1}^k\|u_i\|^2_{H^m(\Omega)}\right)^{1/2} \text{ and } \quad \,\|\overrightarrow{u}\|_{\mathbf{W}^{m,\infty}(\Omega)}=\max_{1 \leq i \leq k}\|u_i\|_{W^{m,\infty}(\Omega)}.
\end{align*}
The Sobolev space $H_0^{m}(\Omega)$ denotes the completion of $C_c^\infty(\Omega)$ with respect to  $\lVert \cdot\rVert_{H^m(\Omega)}$ norm and $H^{-m}(\O)$ denotes the dual space of $H_0^m(\O)$. 
In what follows, we fix $M(t,x):= \left(\overrightarrow{M}_{ij}(t,x)\right)_{{1\leq i,j\leq k}}$ with each $\overrightarrow{M}_{ij} :=\left(M_{ij}^1, M_{ij}^2, \cdots , M_{ij}^n\right) \in \textbf{L}^{\infty}(\Omega_T),$ $N(t,x):= \left(\overrightarrow{N}_{ij}(t,x)\right)_{{1\leq i,j\leq k}}$ with each $\overrightarrow{N}_{ij}:=\left(N_{ij}^1, N_{ij}^2, \cdots , N_{ij}^n\right) \in \mathbf{W}^{1,\infty}(\Omega_T)$, and, $P(t,x) := \left(p_{ij}(t,x)\right)_{{1\leq i,j\leq k}}$ with each $p_{ij} \in L^\infty(\Omega_T)$. 
We consider the following general linear reaction-diffusion-convection operator on the cylindrical domain $\Omega_T$, denoted by $\mathcal{K}_{M,N,P}$, and defined by
\begin{align}
 \label{Operator K_{M,N,P}}
   \displaystyle     \mathcal{K}_{M,N,P}\overrightarrow{u} &:= \begin{bmatrix}
 \partial_t u_1- \Delta u_1+  \displaystyle \sum_{j=1}^{k}\overrightarrow{M}_{1j}\cdot \nabla_x u_j+ \sum_{j=1}^{k} (\nabla_x\cdot \overrightarrow{N}_{1j})u_j +\sum_{j=1}^k p_{1j} u_j
\\ 
 \partial_t u_2- \Delta u_2+   \displaystyle\sum_{j=1}^{k}\overrightarrow{M}_{2j}\cdot \nabla_x u_j+ \sum_{j=1}^{k} (\nabla_x\cdot \overrightarrow{N}_{2j})u_j +\sum_{j=1}^k p_{2j} u_j\\
\vdots\\
 \partial_t u_k- \Delta u_k+   \displaystyle\sum_{j=1}^{k}\overrightarrow{M}_{kj}\cdot \nabla_x u_j+ \sum_{j=1}^{k} (\nabla_x\cdot \overrightarrow{N}_{kj})u_j +\sum_{j=1}^k p_{kj} u_j
\end{bmatrix}.
\end{align}
We now prove the well-posedness of the operator $\mathcal{K}_{M,N,P}$ in the following theorem.
\begin{theorem}\label{existence in forward pde}
Assume that $M\in \textbf{L}^{\infty}(\O_T),\ N \in \mathbf{W}^{1,\infty}(\O_T)$ and $P\in \textbf{L}^{\infty}(\O_T)$. Then for $\overrightarrow{F}\in\mathbf{H}^{\frac{3}{2},\frac{3}{4}}(\partial \O_T)$
the IBVP
\begin{align}\label{more general forward pde}
 \left\{  \begin{array}{r l c}
\mathcal{K}_{M,N,P}\overrightarrow{u}(t,x) &= \overrightarrow{0},\quad& (t,x) \in \Omega_T,\\
\overrightarrow{u}(0,x)&= \overrightarrow{0}, \quad& x \in \Omega  ,\\
\overrightarrow{u}(t,x)&= \overrightarrow{F}(t,x),\quad& (t,x) \in \partial \Omega_T,
\end{array} \right.
\end{align}
admits a unique solution $\overrightarrow{u} \in \mathbf{H}^{2,1}(\Omega_T)$ satisfying
   \begin{align}\label{main estimate in forward pde 1.1}
       \lVert \overrightarrow{u}\rVert_{\mathbf{H}^{2,1}(\Omega_T)}  \leq C \lVert\overrightarrow{F}\rVert_{{}_{0}\mathbf{H}^{\frac{3}{2},\frac{3}{4}}(\partial \O_T)},
   \end{align}
   where $C$ depends on $\Omega,\ T$ and  the coefficients of operator $\mathcal{K}_{M,N,P}$. In particular,  if we define
\begin{align}\label{Value of M,N,P in term of A}
\begin{aligned}
 \overrightarrow{M}_{ij}:=\left\{  \begin{array}{c l }
-2\overrightarrow{A}^{i} \quad& \text{ if }\quad i= j\\0 \quad& \text{ if }\quad i\neq j
\end{array} \right. ,\quad  \overrightarrow{N}_{ij}:=\left\{  \begin{array}{c l }
-\overrightarrow{A}^{i} \quad& \text{ if }\quad i= j\\0 \quad& \text{ if }\quad i\neq j
\end{array} \right. \text{ and}\\  {p}_{ij}:=\left\{  \begin{array}{c l }
q_{ii}-\lvert\overrightarrow{A}^{i}\rvert^2 \quad& \text{ if }\quad i= j\\q_{ij} \quad& \text{ if }\quad i\neq j
\end{array} \right.,
\end{aligned}
\end{align}
where $(A,Q) \in \mathcal{A}_m$ then the operator $\mathcal{K}_{M,N,P}$ reduces to $\mathcal{L}_{A,Q}$. Thus  there exists a unique solution $\overrightarrow{u} \in \mathbf{H}^{2,1}(\Omega_T)$ to IBVP \eqref{PDE}  which satisfies
   \begin{align}\label{main estimate in forward pde}
   \lVert \overrightarrow{u}\rVert_{\mathbf{H}^{2,1}(\Omega_T)}  \leq C \lVert\overrightarrow{F}\rVert_{{}_{0}\mathbf{H}^{\frac{3}{2},\frac{3}{4}}(\partial \O_T)},
   \end{align}   where $C$ depends on $\Omega,\ T$ and $m$.
\end{theorem}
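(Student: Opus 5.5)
We reduce the problem to homogeneous boundary data by lifting the Dirichlet data, and then treat the coupled lower-order terms as a perturbation of the decoupled heat system.

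\emph{Lifting.} Since $\overrightarrow{F}\in{}_{0}\mathbf{H}^{\frac32,\frac34}(\partial\Omega_T)$ satisfies the compatibility condition $\overrightarrow{F}(0,\cdot)=0$, the parabolic trace theorem (Lions--Magenes) gives a lifting $\overrightarrow{G}\in\mathbf{H}^{2,1}(\Omega_T)$ with
\[
\overrightarrow{G}|_{\partial\Omega_T}=\overrightarrow{F},\qquad \overrightarrow{G}(0,\cdot)=0,\qquad \lVert\overrightarrow{G}\rVert_{\mathbf{H}^{2,1}(\Omega_T)}\le C\lVert\overrightarrow{F}\rVert .
\]
One can, for instance, take $\overrightarrow{G}$ to be the solution of the decoupled heat system $(\partial_t-\Delta)\overrightarrow{G}=0$ with boundary data $\overrightarrow{F}$, componentwise.

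Writing $\overrightarrow{u}=\overrightarrow{v}+\overrightarrow{G}$, the problem becomes
\[
\mathcal{K}_{M,N,P}\overrightarrow{v}=\overrightarrow{f}:=-\mathcal{K}_{M,N,P}\overrightarrow{G},\qquad \overrightarrow{v}(0,\cdot)=0,\qquad \overrightarrow{v}|_{\partial\Omega_T}=0 .
\]
Because $M\in\mathbf{L}^\infty$, $N\in\mathbf{W}^{1,\infty}$ (so $\nabla_x\cdot N\in L^\infty$) and $P\in\mathbf{L}^\infty$, the source satisfies $\overrightarrow{f}\in\mathbf{L}^2(\Omega_T)$ with $\lVert\overrightarrow{f}\rVert_{\mathbf{L}^2}\le C\lVert\overrightarrow{F}\rVert$.

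\emph{Homogeneous problem.} Write the operator as $\partial_t-\Delta+B(t)$, where
\[
B\overrightarrow{v}=M\cdot\nabla_x\overrightarrow{v}+(\nabla_x\cdot N)\overrightarrow{v}+P\overrightarrow{v}
\]
is first order with bounded coefficients. I see two routes.

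The first is a Galerkin or Lions-theorem argument in $L^2(0,T;\mathbf{H}^1_0)\cap H^1(0,T;\mathbf{H}^{-1})$. The key energy estimate comes from testing with $\overrightarrow{v}$:
\[
\tfrac12\tfrac{d}{dt}\lVert\overrightarrow{v}\rVert^2+\lVert\nabla\overrightarrow{v}\rVert^2\le C\lVert\overrightarrow{v}\rVert\,\lVert\nabla\overrightarrow{v}\rVert+C\lVert\overrightarrow{v}\rVert^2+\lVert\overrightarrow{f}\rVert\,\lVert\overrightarrow{v}\rVert .
\]
Young's inequality and Gronwall then give a weak solution. To upgrade to $\mathbf{H}^{2,1}$, test with $-\Delta\overrightarrow{v}$ (or with $\partial_t\overrightarrow{v}$). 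The term $\lVert M\cdot\nabla\overrightarrow{v}\rVert\,\lVert\Delta\overrightarrow{v}\rVert$ is absorbed by $\tfrac12\lVert\Delta\overrightarrow{v}\rVert^2$ plus $C\lVert\nabla\overrightarrow{v}\rVert^2$, and Gronwall yields bounds on $\lVert\overrightarrow{v}\rVert_{L^\infty(0,T;\mathbf{H}^1_0)}$ and $\lVert\Delta\overrightarrow{v}\rVert_{L^2}$. Elliptic regularity on the smooth domain $\Omega$ then controls $\lVert\overrightarrow{v}\rVert_{L^2(0,T;\mathbf{H}^2)}$, and $\partial_t\overrightarrow{v}$ is read off from the equation.

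The alternative is a fixed point. Let $S$ be the solution operator of the decoupled heat system, which maps $\mathbf{L}^2(\Omega_T)$ into $\mathbf{H}^{2,1}$ with zero initial and boundary data. Then solve $\overrightarrow{v}=S(\overrightarrow{f}-B\overrightarrow{v})$. On a short interval $(0,\tau)$ the map is a contraction, since by interpolation
\[
\lVert B\overrightarrow{v}\rVert_{L^2(0,\tau)}\le C\,\tau^{\theta}\lVert\overrightarrow{v}\rVert_{\mathbf{H}^{2,1}} .
\]
Because $\tau$ depends only on the coefficient bounds, one iterates over finitely many intervals.

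Uniqueness follows from the same energy estimate with $\overrightarrow{f}=0$ and Gronwall.

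\emph{Reduction to $\mathcal{L}_{A,Q}$.} Expanding $(\partial_j+A_j^i)^2u_i$ shows that the choice \eqref{Value of M,N,P in term of A} reproduces $\mathcal{L}_{A,Q}$. Since $\ell-1>\frac{n+1}{2}+2$, the Sobolev embedding gives $\overrightarrow{A}^i\in W^{1,\infty}(\Omega_T)$ and $q_{ij}\in L^\infty(\Omega_T)$, with norms bounded by $Cm$. Hence $M,N,P$ satisfy the hypotheses of the general case, and the constant depends only on $\Omega$, $T$ and $m$.

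The main obstacle is the $\mathbf{H}^{2,1}$ regularity step, specifically keeping the first-order coupling term $M\cdot\nabla_x\overrightarrow{v}$ under control using only $L^\infty$ coefficients. The coupling is harmless at the energy level, but the absorption argument has to be carried out carefully to obtain $C$ uniform in terms of $\lVert M\rVert_\infty$, $\lVert N\rVert_{W^{1,\infty}}$ and $\lVert P\rVert_\infty$.
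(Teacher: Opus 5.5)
Your proposal is correct and follows essentially the same route as the paper. Both arguments lift the Dirichlet data by the parabolic trace theorem, reduce to a problem with zero lateral and initial data and an $\mathbf{L}^2$ source, and control the bounded first-order coupling through an energy (G\aa rding-type) estimate. The paper checks continuity and the G\aa rding inequality for the time-dependent sesquilinear form and then cites Lions--Magenes for the $\mathbf{H}^{2,1}$ solution, whereas you derive that regularity yourself by testing with $-\Delta\overrightarrow{v}$ and using elliptic regularity.

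One point in your favour: you explicitly choose the lifting with $\overrightarrow{G}(0,\cdot)=\overrightarrow{0}$. This is needed to get $\overrightarrow{u}(0,\cdot)=\overrightarrow{0}$, and the paper's proof leaves it implicit, since its lifting is only required to have the correct lateral trace.
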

\begin{proof}
The proof is based on the classical results presented in
\cite{lionsmagenes1972vol2}. By the trace theorem, the trace operator
$\Gamma:\mathbf{H}^{2,1}(\Omega_T)\longrightarrow
\mathbf{H}^{\frac32,\frac34}(\partial\Omega_T)$
is a continuous surjection map. Consequently, for every
$\overrightarrow{F}\in
{}_{0}\mathbf{H}^{\frac32,\frac34}(\partial\Omega_T)$,
there exists a function
$\overrightarrow{v}\in
\mathbf{H}^{2,1}(\Omega_T)$
such that
$\Gamma(\overrightarrow{v})
=
\overrightarrow{F}$.
Moreover, there exists a constant $C>0$, depending only on $\Omega_T$, such that
\begin{align}\label{eq; estimate trace map}
\|\overrightarrow{v}\|_{\mathbf{H}^{2,1}(\Omega_T)}
\le
C
\|\overrightarrow{F}\|_{{}_{0}\mathbf{H}^{\frac32,\frac34}(\partial\Omega_T)}.
\end{align}
We emphasize that the lifting function $\overrightarrow{v}$ is, in general,
not unique. Fix any such function $\overrightarrow{v}$, and define
$\overrightarrow{u}
:=
\overrightarrow{w}
+
\overrightarrow{v}$.
Substituting this decomposition into IBVP
\eqref{more general forward pde}, we find that
$\overrightarrow{w}$ satisfies the following IBVP:
 \begin{align}\label{for which we required bilinear form}
     \left\{   \begin{array}{r l c}
\displaystyle \left(\partial_t-\Delta_x+ M\cdot \nabla_x+ \nabla_x \cdot N+P\right)\overrightarrow{w}(t,x)&= F(\overrightarrow{v}), \ & (t,x) \in \Omega_T,\\
\overrightarrow{w}(0,x)&= \overrightarrow{0}, \ & x \in \Omega  ,\\
\overrightarrow{w}(t,x)&= \overrightarrow{0},\ & (t,x) \in \partial \Omega_T, 
\end{array}\right.
    \end{align}
    where $F(\overrightarrow{v})
:=
-
\left(
\partial_t
-\Delta_x
+
M\cdot\nabla_x
+
\nabla_x\cdot N
+
P
\right)
\overrightarrow{v}$.
    First, we prove that Equation~\eqref{for which we required bilinear form} admits a unique solution $\overrightarrow{w}\in\mathbf{H}^{2,1}(\Omega_T)$. We then show that the uniqueness of the solution to Equation~\eqref{for which we required bilinear form} implies the uniqueness of the solution to the IBVP~\eqref{more general forward pde}. Consequently, establishing uniqueness for the former problem is sufficient to deduce the uniqueness of the latter. 
Since $\overrightarrow{v}\in \mathbf{H}^{2,1}(\Omega_T)$, it follows from the regularity of $\overrightarrow{v}$ and the assumptions on the coefficients $M,N$ and $P$ that
\begin{align*}
    F(\overrightarrow{v})
:=
-
\left(
\partial_t
-\Delta_x
+
M\cdot\nabla_x
+
\nabla_x\cdot N
+
P
\right)
\overrightarrow{v}
\in
L^{2}\bigl(0,T;\mathbf{L}^{2}(\Omega)\bigr).
\end{align*}
Moreover, by the boundedness of the coefficients $M$, $N$, and $P$, we obtain
\begin{align}\label{right hand bound in inhomogenous operator}
\begin{aligned}
&
\left\|
-\left(
\partial_t
-\Delta_x
+
M\cdot\nabla_x
+
(\nabla_x\cdot N)
+
P
\right)
\overrightarrow{v}
\right\|_{\mathbf{L}^{2}(\Omega_T)}
\\
&\qquad
\le
C\Bigl(
\|\partial_t\overrightarrow{v}\|_{\mathbf{L}^{2}(\Omega_T)}
+
\|\Delta_x\overrightarrow{v}\|_{\mathbf{L}^{2}(\Omega_T)}
+
C_1\|\nabla_x\overrightarrow{v}\|_{\mathbf{L}^{2}(\Omega_T)}
+
(C_2+C_3)
\|\overrightarrow{v}\|_{\mathbf{L}^{2}(\Omega_T)}
\Bigr)
\\
&\qquad
\le
C
\|\overrightarrow{v}\|_{\mathbf{H}^{2,1}(\Omega_T)},
\end{aligned}
\end{align}
where
\begin{align*}
    C_1
=
\max_{1\le i,j\le k}
\|\overrightarrow{M}_{i,j}\|_{\mathbf{L}^{\infty}(\Omega_T)},
\quad
C_2
=
\max_{1\le i,j\le k}
\|\overrightarrow{N}_{i,j}\|_{\mathbf{W}^{1,\infty}(\Omega_T)}, \quad\text{ and }\quad
    C_3
=
\max_{1\le i,j\le k}
\|p_{i,j}\|_{L^{\infty}(\Omega_T)}.
\end{align*}
Motivated by the estimate \eqref{right hand bound in inhomogenous operator}, we now introduce the sesquilinear form
$B(\cdot,\cdot;t)$ associated with the IBVP
\eqref{for which we required bilinear form}. This form will play a key role in establishing the existence and uniqueness of weak solutions. It is defined on the Hilbert space
$\mathbf{H}_0^{1}(\Omega)$ by
    \begin{align}\label{bilinear operator}
\begin{aligned}
B(\overrightarrow{\mathrm{u}}_1,\overrightarrow{\mathrm{u}}_2;t)
:=
\int_{\Omega}
\nabla_x\overrightarrow{\mathrm{u}}_1
\cdot
\overline{\nabla_x\overrightarrow{\mathrm{u}}_2}\,dx
+
\int_{\Omega}
\left(M\cdot\nabla_x\overrightarrow{\mathrm{u}}_1\right)
\cdot
\overline{\overrightarrow{\mathrm{u}}_2}\,dx
+
\int_{\Omega}
(\nabla_x\cdot N)\overrightarrow{\mathrm{u}}_1
\cdot
\overline{\overrightarrow{\mathrm{u}}_2}\,dx
+
\int_{\Omega}
P\overrightarrow{\mathrm{u}}_1
\cdot
\overline{\overrightarrow{\mathrm{u}}_2}\,dx,
\end{aligned}
\end{align}
for every
$\overrightarrow{\mathrm{u}}_1,
\overrightarrow{\mathrm{u}}_2
\in
\mathbf{H}_0^1(\Omega)$
and $t\in(0,T)$.
By H\"older's inequality, together with the boundedness of the coefficients
$M$, $N$, and $P$, the form
$B(\cdot,\cdot;t)$ is a continuous sesquilinear form on
$\mathbf{H}_0^1(\Omega)\times \mathbf{H}_0^1(\Omega)$. More precisely,
\begin{align}\label{continuity bilinear form}
\left|
B(\overrightarrow{\mathrm{u}}_1,\overrightarrow{\mathrm{u}}_2;t)
\right|
\le
C
\|
\overrightarrow{\mathrm{u}}_1
\|_{\mathbf{H}^1(\Omega)}
\|
\overrightarrow{\mathrm{u}}_2
\|_{\mathbf{H}^1(\Omega)},
\end{align}
for every
$\overrightarrow{\mathrm{u}}_1,
\overrightarrow{\mathrm{u}}_2
\in
\mathbf{H}_0^1(\Omega)$
,
where the constant $C>0$ depends only on
$\Omega$, $M$, $N$, and $P$.
We next verify that $B(\cdot,\cdot;t)$ satisfies a G\aa rding inequality.
Applying H\"older's inequality followed by Young's inequality, we obtain
\begin{align}\label{Garding estimate}
\begin{aligned}
\mathfrak{Re}
B(\overrightarrow{\mathrm{u}}_1,\overrightarrow{\mathrm{u}}_1;t)
\ge\;&
\|
\nabla_x\overrightarrow{\mathrm{u}}_1
\|_{\mathbf{L}^{2}(\Omega)}^{2}
-
2(C_2+C_3)
\|
\overrightarrow{\mathrm{u}}_1
\|_{\mathbf{L}^{2}(\Omega)}^{2}
-
2C_1
\|
\nabla_x\overrightarrow{\mathrm{u}}_1
\|_{\mathbf{L}^{2}(\Omega)}
\|
\overrightarrow{\mathrm{u}}_1
\|_{\mathbf{L}^{2}(\Omega)}
\\
\ge\;&
(1-\varepsilon C_1)
\|
\nabla_x\overrightarrow{\mathrm{u}}_1
\|_{\mathbf{L}^{2}(\Omega)}^{2}
-
2\left(
C_2+C_3+\frac{C_1}{2\varepsilon}
\right)
\|
\overrightarrow{\mathrm{u}}_1
\|_{\mathbf{L}^{2}(\Omega)}^{2},
\end{aligned}
\end{align}
for every
$\overrightarrow{\mathrm{u}}_1
\in
\mathbf{H}_0^1(\Omega)$
and $t\in(0,T)$.
Choosing
$\varepsilon=\frac{1}{2C_1}$,
the estimate \eqref{Garding estimate} simplifies to
\begin{align}\label{Garding inequality}
\mathfrak{Re}
B(\overrightarrow{\mathrm{u}}_1,\overrightarrow{\mathrm{u}}_1;t)
+
\gamma
\|
\overrightarrow{\mathrm{u}}_1
\|_{\mathbf{L}^{2}(\Omega)}^{2}
\ge
\frac{1}{2}
\|
\overrightarrow{\mathrm{u}}_1
\|_{\mathbf{H}_0^{1}(\Omega)}^{2},
\end{align}
where \( \gamma
=
2(C_2+C_3+C_1^{2})\ge0.\)
Hence, the sesquilinear form $B(\cdot,\cdot;t)$ is continuous and satisfies the G\aa rding inequality uniformly with respect to $t\in(0,T)$. Therefore, an application of \cite[Theorem~6.1, Chapter~4]{lionsmagenes1972vol2} implies that the IBVP \eqref{for which we required bilinear form} admits a unique solution
$\overrightarrow{w}\in\mathbf{H}^{2,1}(\Omega_T).$
Moreover, there exists a constant $C>0$, depending only on
$\Omega_T$, $M$, $N$, and $P$, such that
\begin{align}\label{estimate w}
\begin{aligned}
\|\overrightarrow{w}\|_{\mathbf{H}^{2,1}(\Omega_T)}
&\le
C
\left\|
-\left(
\partial_t
-\Delta_x
+
M\cdot\nabla_x
+
(\nabla_x\cdot N)
+
P
\right)
\overrightarrow{v}
\right\|_{\mathbf{L}^{2}(\Omega_T)}
\\
&\le
C
\|\overrightarrow{v}\|_{\mathbf{H}^{2,1}(\Omega_T)}
\le
C
\|\overrightarrow{F}\|_{{}_{0}\mathbf{H}^{\frac32,\frac34}(\partial\Omega_T)},
\end{aligned}
\end{align}
where the last inequality follows from the trace estimate
\eqref{eq; estimate trace map}.
We are now in a position to establish the existence of a solution to IBVP
\eqref{more general forward pde}. Indeed, defining
\(
    \overrightarrow{u}
:=
\overrightarrow{w}
+
\overrightarrow{v},
\)
where
$\overrightarrow{w},
\overrightarrow{v}
\in
\mathbf{H}^{2,1}(\Omega_T),$
it follows immediately that
$\overrightarrow{u}
\in
\mathbf{H}^{2,1}(\Omega_T)$
satisfies the IBVP
\eqref{more general forward pde}.
It remains to establish the uniqueness of the solution. More precisely, we
show that the solution is independent of the choice of the lifting function
$\overrightarrow{v}$.
Suppose, on the contrary, that
\(
    \overrightarrow{u}_1
=
\overrightarrow{w}_1
+
\overrightarrow{v}_1
\) and \(
\overrightarrow{u}_2
=
\overrightarrow{w}_2
+
\overrightarrow{v}_2
\)
are two solutions of IBVP \eqref{more general forward pde}, where
$\overrightarrow{v}_1,\,
\overrightarrow{v}_2
\in
\mathbf{H}^{2,1}(\Omega_T)$
satisfy
$\Gamma(\overrightarrow{v}_1)
=
\Gamma(\overrightarrow{v}_2)
=
\overrightarrow{F}$,
together with the estimate
\eqref{eq; estimate trace map}. Let
$\overrightarrow{w}_1,
\overrightarrow{w}_2
\in
\mathbf{H}^{2,1}(\Omega_T)$
be the corresponding solutions of IBVP
\eqref{for which we required bilinear form}
associated with
$\overrightarrow{v}_1$ and $\overrightarrow{v}_2$, respectively.
Define
$\overrightarrow{U}
:=
\overrightarrow{u}_1
-
\overrightarrow{u}_2$.
Then $\overrightarrow{U}$ satisfies
\begin{align}
\label{homogeneous equation uniqueness}
\left\{
\begin{aligned}
\mathcal{K}_{M,N,P}\overrightarrow{U}
&=
\overrightarrow{0},
&&
(t,x)\in\Omega_T,
\\
\overrightarrow{U}(0,x)
&=
\overrightarrow{0},
&&
x\in\Omega,
\\
\overrightarrow{U}(t,x)
&=
\overrightarrow{0},
&&
(t,x)\in\partial\Omega_T.
\end{aligned}
\right.
\end{align}
The well-posedness result established above implies that the homogeneous
problem \eqref{homogeneous equation uniqueness} admits only the trivial
solution
which immediately yields
$\overrightarrow{u}_1
=
\overrightarrow{u}_2$.
Therefore, the solution of IBVP
\eqref{more general forward pde}
is independent of the choice of the lifting function
$\overrightarrow{v}$.
Consequently, the IBVP~\eqref{more general forward pde}
admits a unique solution
$\overrightarrow{u}
\in
\mathbf{H}^{2,1}(\Omega_T)$.
Finally, taking
$M$, $N$, and $P$
as defined in Equation~\eqref{Value of M,N,P in term of A}
and assuming
$(A,Q)\in\mathcal{A}_m$, following Sobolev embedding theorem, 
we conclude that the IBVP
\eqref{PDE}
admits a unique solution satisfying the stability estimate
\eqref{main estimate in forward pde}.
    \end{proof}
\section{Carleman estimate}\label{Section 3} 
In this section, we derive a Carleman estimate that will play an important role in the proof of the main theorem. The section is structured into two subsections. The first subsection is devoted to boundary Carleman estimates for the operator $\mathcal{K}_{M,N,P}$, defined by
\begin{align}\label{operator K_M,N,P}
\mathcal{K}_{M,N,P}\overrightarrow{u}
:=
\partial_t\overrightarrow{u}
-\Delta_x\overrightarrow{u}
+
M(t,x)\cdot\nabla_x\overrightarrow{u}
+
\bigl(\nabla_x\cdot N(t,x)\bigr)\overrightarrow{u}
+
P(t,x)\overrightarrow{u},
\end{align}
where the matrix-valued coefficients $M$, $N$, and $P$ are as introduced in Section~\ref{section 2}, whereas the second subsection focuses on deriving interior Carleman estimates in negative-order Sobolev spaces for the operator $\mathcal{K}_{M,N,P}^*$, defined in \eqref{Operator K^{*}_{A,Q}}.
The boundary Carleman estimate will be employed to control the unknown boundary contributions arising in the integral identity, by expressing them in terms of the available boundary measurements. On the other hand, the interior Carleman estimate will play a crucial role in constructing the special solutions, commonly referred to as geometric-optics solutions. Our approach follows the framework developed in \cite{choulli2018logarithmic, FerreiraKenigSaloUhlmann2009, KenigSjostrandUhlmann2007, MishraPurohitVashisth2025,sahoo2019partial,senapati2021stability}, where analogous Carleman estimates were established for scalar elliptic and parabolic equations. The main novelty here is the extension of these estimates to a coupled system of parabolic equations with matrix-valued coefficients.
To derive the desired Carleman estimates, we introduce the linear Carleman weight function
\begin{align}\label{carleman weight}
\phi(t,x)
=
\lambda^2 t+\lambda\,x\cdot\omega,
\end{align}
where $\lambda>0$ is a large parameter and $\o \in \mathbb{S}^{n-1}$.
\par
For scalar parabolic equations, Carleman estimates associated with the weight \eqref{carleman weight} have been established in
\cite{choulli2018logarithmic, MishraPurohitVashisth2025,
sahoo2019partial}. In the present work, we prove analogous Carleman estimates for the coupled parabolic system governed by the operator $\mathcal{K}_{M,N,P}$. 

%%%%%%%%%%%%%%%%%%%
\subsection{Boundary Carleman estimate} 

In this subsection, we derive a weighted $H^1$--$L^2$ boundary Carleman estimate for the operator $\mathcal{K}_{M,N,P}$. The result is formulated in the following theorem.

\begin{theorem}[Boundary Carleman estimate]\label{BCE}
Let $\mathcal{K}_{M,N,P}$ and $\phi$ be given by \eqref{operator K_M,N,P} and \eqref{carleman weight}, respectively. Then, for any $\overrightarrow{u}\in {\textbf{C}}^{\infty}(\overline{\Omega_T})$ such that $\overrightarrow{u}(t,x)|_{\partial\Omega_T} = \overrightarrow{0}$ and $\overrightarrow{u}(0,x) = \overrightarrow{0}$ for $x\in\O$, there exists a constant $C>0$, depending only on $\O, T, M, N$ and $P$, for which the estimate
\begin{align}
   &\lambda \left\langle (\o\cdot \nu(x))e^{-\phi}\partial_\nu \overrightarrow{u}, e^{-\phi}\partial_\nu\overrightarrow{u}\right\rangle_{(0,T)\times B(\o)} 
   +\left(
\lambda^2\|e^{-\phi}\overrightarrow{u}\|^2_{\textbf{L}^2(\Omega_T)} + \|e^{-\phi}(\nabla_x \cdot\overrightarrow{u})\|^2_{\textbf{L}^2(\Omega_T)}\right)\label{boundary carleman estimate1}\\
&\quad +\lambda
\|e^{-\phi(T,x)}\overrightarrow{u}(T,x)\|^2_{\textbf{L}^2(\Omega)} \leq C\|e^{-\phi}\mathcal{K}_{M,N,P}\overrightarrow{u}\|^2_{\textbf{L}^2(\Omega_T)} +C\lambda \left\langle (\o\cdot \nu(x))e^{-\phi}\partial_\nu \overrightarrow{u},e^{-\phi}\partial_\nu\overrightarrow{u}\right\rangle_{(0,T)\times F(\o)} \nonumber
\end{align}
holds for all $\lambda$ sufficiently large.
\end{theorem}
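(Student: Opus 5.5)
We will follow the standard conjugation argument used for scalar parabolic operators (as in \cite{choulli2018logarithmic, sahoo2019partial, senapati2021stability}), applied componentwise. The lower-order coupling terms are absorbed at the end. Since the principal part $\partial_t-\Delta_x$ of $\mathcal{K}_{M,N,P}$ is diagonal, we first prove the estimate for $\mathcal{K}_{0,0,0}=\partial_t-\Delta_x$ acting on each scalar component $u_i$. We then sum over $i=1,\dots,k$.

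\textbf{Step 1: conjugation.} Set $v:=e^{-\phi}u_i$, so that $v|_{\partial\Omega_T}=0$ and $v(0,\cdot)=0$. With $\phi=\lambda^2t+\lambda x\cdot\omega$ we have $\nabla_x\phi=\lambda\omega$ and $\partial_t\phi=\lambda^2$. Hence
\begin{align*}
e^{-\phi}(\partial_t-\Delta_x)e^{\phi}v=\partial_t v-\Delta_x v-2\lambda\,\omega\cdot\nabla_x v+\lambda^2 v-\lambda^2 v=\partial_t v-\Delta_x v-2\lambda\,\omega\cdot\nabla_x v .
\end{align*}
The $\lambda^2$ terms cancel exactly, which is why this weight is chosen. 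Write $P_\lambda v:=(\partial_t v-2\lambda\omega\cdot\nabla_x v)-\Delta_x v=:P_1v+P_2v$ and expand
\[
\|P_\lambda v\|^2=\|P_1v\|^2+\|\Delta_x v\|^2-2\mathfrak{Re}\langle P_1v,\Delta_x v\rangle .
\]
The cross term splits into two pieces. The first is $-2\mathfrak{Re}\langle\partial_t v,\Delta_x v\rangle$, which after integration by parts (using $v|_{\partial\Omega_T}=0$ and $v(0)=0$) gives $\|\nabla_x v(T)\|^2_{L^2(\Omega)}\ge0$. The second is $4\lambda\,\mathfrak{Re}\langle\omega\cdot\nabla_x v,\Delta_x v\rangle$. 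Integrating by parts twice, and using that $\nabla_x v=(\partial_\nu v)\nu$ on the boundary, it equals
\[
2\lambda\int_{\partial\Omega_T}(\omega\cdot\nu)|\partial_\nu v|^2\,dS_x\,dt .
\]
Since $\partial_\nu v=e^{-\phi}\partial_\nu u_i$ on $\partial\Omega_T$, this produces exactly the boundary terms of \eqref{boundary carleman estimate1}: the $B(\omega)$ part is nonnegative and stays on the left, and the $F(\omega)$ part is moved to the right.

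\textbf{Step 2: recovering the $\lambda$-weighted bulk terms.} The conjugated operator has no zeroth-order positive term, so the $\lambda^2\|v\|^2$ control must come from $P_1 v$. Our first attempt is a second identity. Multiply $P_\lambda v$ by $\bar v$ and take real parts:
\[
\mathfrak{Re}\langle P_\lambda v,v\rangle=\tfrac12\|v(T)\|^2+\|\nabla_xv\|^2 ,
\]
because the $\omega\cdot\nabla_x$ term is purely imaginary after integration by parts. Next, pair $P_\lambda v$ with $\lambda(\omega\cdot x)\bar v$, or test against $\lambda v$ directly, to obtain $\lambda\|v(T)\|^2$. The term $\lambda^2\|v\|^2$ comes from the Poincaré inequality in the direction $\omega$: since $v$ vanishes on $\partial\Omega$, we have $\|v\|\le C\|\omega\cdot\nabla_x v\|$. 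Writing $2\lambda\omega\cdot\nabla_x v=\partial_t v-\Delta_x v-P_\lambda v$, we plan to bound $\lambda^2\|\omega\cdot\nabla_x v\|^2$ by combining $\|P_1v\|^2$ with the already-controlled $\|\Delta_x v\|^2$ and $\|v(T)\|^2$. Concretely, expand
\[
\|P_1 v\|^2=\|\partial_t v\|^2+4\lambda^2\|\omega\cdot\nabla_x v\|^2-4\lambda\,\mathfrak{Re}\langle\partial_tv,\omega\cdot\nabla_xv\rangle .
\]
The last term is a total derivative plus a boundary term at $t=T$, bounded by $\lambda\|\nabla_x v(T)\|\,\|v(T)\|$. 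We expect this bookkeeping to be the \textbf{main obstacle}. It requires balancing the $t=T$ boundary terms, $\|\nabla_xv(T)\|^2$ from the cross term against $\lambda\|v(T)\|\,\|\nabla_xv(T)\|$. If the balance fails, we will instead pass to the weight $\phi_\varepsilon=\phi-\varepsilon\lambda^2 t$ (or use a convexifying perturbation), which makes the conjugated operator contain $\varepsilon\lambda^2 v$ and yields $\lambda^2\|v\|^2$ and $\lambda\|v(T)\|^2$ from a positive commutator.

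\textbf{Step 3: return to $u$ and absorb lower-order terms.} Going back to $u_i$ with $\nabla_x u_i=e^{\phi}(\nabla_x v+\lambda\omega v)$ gives $\|e^{-\phi}\nabla_x u_i\|\lesssim\|\nabla_x v\|+\lambda\|v\|$. The gradient term on the left is therefore controlled, and we sum over $i$. Finally, write
\[
\mathcal{K}_{0,0,0}\overrightarrow u=\mathcal{K}_{M,N,P}\overrightarrow u-M\cdot\nabla_x\overrightarrow u-(\nabla_x\cdot N)\overrightarrow u-P\overrightarrow u .
\]
By the $L^\infty$ and $W^{1,\infty}$ bounds on the coefficients, the coupling terms are dominated by $C(\|e^{-\phi}\nabla_x\overrightarrow u\|^2+\|e^{-\phi}\overrightarrow u\|^2)$. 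The off-diagonal coupling causes no difficulty here because it is of lower order. For $\lambda$ large these terms are absorbed by the left side, giving \eqref{boundary carleman estimate1} with $C$ depending on $\Omega,T,M,N,P$.
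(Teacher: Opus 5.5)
Step 1 is correct, and it matches the paper's $I_{1,1}$ and $I_{1,5}$ computations. Step 2 can be repaired (see below). Step 3, however, has a genuine gap: after conjugation, the coupling term $M\cdot\nabla_x$ is not of lower order with respect to the weight. Indeed, $e^{-\phi}M\cdot\nabla_x(e^{\phi}\overrightarrow{v})=M\cdot\nabla_x\overrightarrow{v}+\lambda(\omega\cdot M)\overrightarrow{v}$. Your own bound $\|e^{-\phi}\nabla_x u_i\|\lesssim\|\nabla_x v\|+\lambda\|v\|$ says the same thing. So the error you must absorb contains $C\|M\|_{L^\infty}^2\lambda^2\|\overrightarrow{v}\|^2$.

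The estimate for $\partial_t-\Delta_x$ with the linear weight gives only $c_0\lambda^2\|\overrightarrow{v}\|^2$ on the left, with $c_0$ independent of $M$. Both sides scale like $\lambda^2$, so letting $\lambda\to\infty$ achieves nothing. The absorption works only if $\|M\|_{L^\infty}$ is small, which is not assumed. The missing ingredient is a second large parameter in front of the bulk terms.

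In the paper this parameter comes from the convexified weight $\phi_\sigma=\phi-\tfrac{\sigma}{2}((x+x_0)\cdot\omega)^2$, with $x_0\cdot\omega=2+\sup_{\Omega}|x|$. The cross terms produced by the $\sigma$-dependent parts of $\mathcal{K}_1$ and $\mathcal{K}_2$ (the terms $I_{1,4}$ and $I_{1,6}$) give $\sigma\|\nabla_x\overrightarrow{v}\|^2$ and $\tfrac{\sigma}{2}\lambda^2\|\overrightarrow{v}\|^2$. One therefore fixes $\sigma\gtrsim\|M\|_{L^\infty}^2$ first and only then takes $\lambda$ large. Since $e^{\pm\frac{\sigma}{2}((x+x_0)\cdot\omega)^2}$ is bounded above and below independently of $\lambda$, this factor is removed at the end, at the cost of a constant depending on $M$.

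Your fallback does not close the gap. With $\phi-\varepsilon\lambda^2 t$, the conjugated operator contains $-\varepsilon\lambda^2\overrightarrow{v}$, which has the wrong sign. Even with the opposite sign, the factor $e^{\pm\varepsilon\lambda^2 t}$ is not comparable to $1$ uniformly in $\lambda$, so you could not return to the weight $\phi$ of the statement. Any correction to the weight must be bounded and $\lambda$-independent.

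A fix closer to your multiplier idea also works. Pair $e^{-\phi}\mathcal{K}_{M,N,P}e^{\phi}\overrightarrow{v}$ with $e^{\sigma(x+x_0)\cdot\omega}\,\overline{\overrightarrow{v}}$. The drift then contributes $\lambda\sigma\int e^{\sigma(x+x_0)\cdot\omega}|\overrightarrow{v}|^2$, which dominates the $\lambda(\omega\cdot M)$ term once $\sigma$ exceeds a multiple of $\|M\|_{L^\infty}$.

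Two smaller points about Step 2. First, $\mathfrak{Re}\langle\partial_t v,\omega\cdot\nabla_x v\rangle$ is not a total derivative: integrating by parts in $x$ and then in $t$ just returns the same integral. You do not need it, though. Second, the multiplier must be positive. With $((x+x_0)\cdot\omega)\,\bar v$, where $(x+x_0)\cdot\omega\ge 2$, you get $\lambda\|v\|^2+\|v(T)\|^2+2\|\nabla_x v\|^2\le C\|P_\lambda v\|\,\|v\|$. This yields all the bulk and $t=T$ terms for the heat part. The unshifted multiplier $\omega\cdot x$ can change sign and does not give this.
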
 

\begin{remark}
    \begin{enumerate}
    \item In particular, consider $(A,Q)\in \mathcal{A}_m$ and let $\mathcal{L}_{A,Q}$ be defined by \eqref{Operator L_{A,Q}}. Then, for any $\overrightarrow{u}\in {\textbf{C}}^{\infty}(\overline{\Omega_T})$ satisfying $\overrightarrow{u}(t,x)|_{\partial\Omega_T} = \overrightarrow{0}$ and $\overrightarrow{u}(0,x) = \overrightarrow{0}$ for $x\in\O$, there exists a constant $C>0$, depending only on $\O, T$ and $m$, such that
\begin{align}
   &\lambda \left\langle (\o\cdot \nu(x))e^{-\phi}\partial_\nu \overrightarrow{u},e^{-\phi}\partial_\nu\overrightarrow{u}\right\rangle_{(0,T)\times B(\o)} 
   +\left(
\lambda^2\|e^{-\phi}\overrightarrow{u}\|^2_{\textbf{L}^2(\Omega_T)} + \|e^{-\phi}(\nabla_x \cdot\overrightarrow{u})\|^2_{\textbf{L}^2(\Omega_T)}\right)\label{boundary carleman estimate2}\\
&\quad +\lambda
\|e^{-\phi(T,x)}\overrightarrow{u}(T,x)\|^2_{\textbf{L}^2(\Omega)} \leq C\|e^{-\phi}\mathcal{L}_{A,Q}\overrightarrow{u}\|^2_{\textbf{L}^2(\Omega_T)} +C\lambda \left\langle (\o\cdot \nu(x))e^{-\phi}\partial_\nu \overrightarrow{u},e^{-\phi}\partial_\nu\overrightarrow{u}\right\rangle_{(0,T)\times F(\o)} \nonumber
\end{align}
holds for all $\lambda$ sufficiently large.
        \item Furthermore, if one takes $\overrightarrow{u}\in {\textbf{C}}^{\infty}_c(\Omega_T)$, then all boundary terms in \eqref{boundary carleman estimate1} and \eqref{boundary carleman estimate2} vanish, yielding the following interior Carleman estimates. For any $\overrightarrow{u}\in {\textbf{C}}^{\infty}_c(\Omega_T)$, there exists a constant $C>0$ (independent of $\lambda$) such that
\begin{align}
\label{interior carleman estimate1}
\begin{aligned}
 \left(
\lambda^2\|e^{-\phi}\overrightarrow{u}\|^2_{\textbf{L}^2(\Omega_T)} + \|e^{-\phi}(\nabla_x \cdot\overrightarrow{u})\|^2_{\textbf{L}^2(\Omega_T)}\right) &\leq C\|e^{-\phi}\mathcal{K}_{M,N,P}\overrightarrow{u}\|^2_{\textbf{L}^2(\Omega_T)}, \\ 
 \left(
\lambda^2\|e^{-\phi}\overrightarrow{u}\|^2_{\textbf{L}^2(\Omega_T)} + \|e^{-\phi}(\nabla_x \cdot\overrightarrow{u})\|^2_{\textbf{L}^2(\Omega_T)}\right) &\leq C\|e^{-\phi}\mathcal{L}_{A,Q}\overrightarrow{u}\|^2_{\textbf{L}^2(\Omega_T)} 
    \end{aligned}
\end{align}
for all $\lambda$ sufficiently large.
    \end{enumerate}
\end{remark}
\begin{proof}[Proof of Theorem \ref{BCE}] 
To absorb the first-order terms, we need to convexify the limiting Carleman weight $\phi$. Inspired by  \cite{caro2018determination, MishraPurohitVashisth2025, senapati2021stability}, we consider a convexified weight function $\phi_{\sigma}$,  as follows 
\begin{align}\label{modified Carleman weight}
    \phi_{\sigma}= \phi-\dfrac{\sigma((x+x_0)\cdot \o)^2}{2}, \qquad \sigma>1. 
\end{align}
where a fixed vector $x_0 \in \mathbb{R}^n$ is chosen satisfying
\(
x_0\cdot\omega
=
2+\sup_{x\in\Omega}|x|.
\)
Next, we start with defining  the conjugated operator $\mathcal{K}_{\phi_\sigma}$ of $\displaystyle \mathcal{K}_{M,N,P}$   by 
\begin{align}
\mathcal{K}_{\phi_\sigma}\overrightarrow{u}:&= e^{-\phi_\sigma} \mathcal{K}_{M,N,P}  \left (e^{\phi_\sigma}\overrightarrow{u}\right)=   e^{-\phi_\sigma}  \left( \partial_t -\Delta_x + M\cdot \nabla_x +\nabla_x\cdot N+P\right) \left (e^{\phi_\sigma}\overrightarrow{u}\right)\nonumber \\ &=  \partial_t\overrightarrow{u} - 2\left[\lambda-\sigma((x+x_0)\cdot\o)\right]\o\cdot \nabla_x \overrightarrow{u}+2\lambda \sigma ((x+x_0)\cdot \o)\overrightarrow{u}
-\sigma^2 ((x+x_0)\cdot \o)^2\overrightarrow{u} \nonumber\\
&\quad - \Delta_x \overrightarrow{u} +\sigma \overrightarrow{u} + M\cdot \nabla_x  \overrightarrow{u}+\left[\lambda-\sigma((x+x_0)\cdot\o)\right](\o\cdot M)\overrightarrow{u}+(\nabla_x\cdot N+P)\overrightarrow{u}\nonumber
\\
 &:= (\mathcal{K}_1+\mathcal{K}_2+\mathcal{K}_3)\overrightarrow{u}\label{conjugate operator in terms of P, Q, R, S}
\end{align}
where 
\begin{align}\label{eq for P}
\begin{aligned}
\mathcal{K}_1 & := \partial_t  - 2\left[\lambda-\sigma((x+x_0)\cdot\o)\right]\o\cdot \nabla_x+4 \sigma, \\\mathcal{K}_2 & :=
- \Delta_x+2\lambda \sigma ((x+x_0)\cdot \o)-\sigma^2 ((x+x_0)\cdot \o)^2-3\sigma,
\\
\mathcal{K}_3 & :=  M\cdot \nabla_x + \left[\lambda-\sigma((x+x_0)\cdot\o)\right](\o\cdot M) +\nabla_x\cdot N+P.
\end{aligned}
\end{align}
 Now, using the triangle inequality in Equation \eqref{conjugate operator in terms of P, Q, R, S}, we have that 
\begin{align}\label{v dagger lower bound}
\begin{aligned}
\lVert\mathcal{K}_{\phi_\sigma}\overrightarrow{u}\rVert^2_{\textbf{L}^2(\Omega_T)}&=\int_{\O_T} \lvert(\mathcal{K}_1+\mathcal{K}_2+\mathcal{K}_3)\overrightarrow{u}\rvert^2~dxdt \\&\geq \dfrac{1}{2}\lVert(\mathcal{K}_1+\mathcal{K}_2) \overrightarrow{u}\rVert^2_{\textbf{L}^2(\Omega_T)}- \lVert \mathcal{K}_3 \overrightarrow{u}\rVert^2_{\textbf{L}^2(\Omega_T)}\\&\geq \underbrace{ \int_{\O_T} \mathfrak{Re}\left(\left(\mathcal{K}_1 \overrightarrow{u}\right)\cdot \overline{\left(
\mathcal{K}_2 \overrightarrow{u}\right)}\right)~dxdt}_{I_1}- \underbrace{\lVert \mathcal{K}_3 \overrightarrow{u}\rVert^2_{\textbf{L}^2(\Omega_T)}}_{I_2}.
\end{aligned}
\end{align}
We now estimate the two terms $I_1$ and $I_2$ appearing in
\eqref{v dagger lower bound}. More precisely, our goal is to derive a suitable lower bound for $I_1$ and an upper bound for $I_2$, and then substitute these estimates into \eqref{v dagger lower bound}. This will enable us to obtain the desired lower bound for
$\|\mathcal{K}_{\phi_\sigma}\overrightarrow{u}\|_{\mathbf{L}^{2}(\Omega_T)}$.
To this end, we first observe that  
\begin{align}\label{I_1 expansion}
\begin{aligned}I_1&=\displaystyle\int_{\O_T} \mathfrak{Re}\left(\left(\mathcal{K}_1 \overrightarrow{u}\right)\cdot \overline{\left(
\mathcal{K}_2 \overrightarrow{u}\right)}\right)~dxdt\\&=-\int_{\O_T}\mathfrak{Re}\left((\partial_t \overrightarrow{u})\cdot\overline{(\Delta_x \overrightarrow{u})}\right)~dxdt+4\sigma\int_{\O_T}\mathcal{S}(x)\lvert \overrightarrow{u} \rvert^2~dxdt+\int_{\O_T}\mathfrak{Re}\left(\partial_t \overrightarrow{u} \cdot \overline{\mathcal{S}(x) \overrightarrow{u}} \right)~dxdt\\&\quad -4 \sigma\int_{\O_T}  \mathfrak{Re}\left(\overrightarrow{u} \cdot \overline{\Delta_x \overrightarrow{u}}\right)~dxdt+2\int_{\O_T} \left[\lambda-\sigma((x+x_0)\cdot\o)\right]\mathfrak{Re}\left((\o\cdot \nabla_x \overrightarrow{u})\cdot \overline{\Delta_x \overrightarrow{u}}\right)~dxdt\\&\quad -2\int_{\O_T} \mathcal{S}(x)\left[\lambda-\sigma((x+x_0)\cdot\o)\right]\mathfrak{Re}\left((\o\cdot \nabla_x \overrightarrow{u}) \cdot \overline{\overrightarrow{u}}\right)~dxdt\\:&\triangleq I_{1,1}+I_{1,2}+I_{1,3}+I_{1,4}+I_{1,5}+I_{1,6}
    \end{aligned}
\end{align} 
where $\mathcal{S}(x)=2\lambda \sigma ((x+x_0)\cdot \o)-\sigma^2 ((x+x_0)\cdot \o)^2-3\sigma$.
 First, we simplify the terms in $I_{1,1}$.
Since the boundary condition $\overrightarrow{u}|_{\partial\Omega_T} = \overrightarrow{0}$ entails $\partial_t \overrightarrow{u}|_{\partial\Omega_T} = \overrightarrow{0}$ and the initial condition $\overrightarrow{u}(0,x) = \overrightarrow{0}$ for all $x \in \Omega$ implies $\nabla_x \overrightarrow{u}(0,x) = \overrightarrow{0}$ for all $x \in \Omega$, we may apply the integration-by-parts formula to obtain
 \begin{align}\label{I11}
 \begin{aligned}
I_{1,1} &= -\mathfrak{Re}\int_{\O_T}(\partial_t \overrightarrow{u})\cdot\overline{(\Delta_x \overrightarrow{u})}~dxdt=\mathfrak{Re}\int_{\O_T}(\partial_t \nabla_x \overrightarrow{u})\cdot \overline{( \nabla_x \overrightarrow{u})}~dxdt=\dfrac{1}{2}\int_{\O_T}\partial_t (\lvert \nabla_x \overrightarrow{u}\rvert^2)~dxdt\\&=\dfrac{1}{2}\int_\O \lvert \nabla_x \overrightarrow{u}\rvert^2\bigg|_0^T~dx=\dfrac{1}{2}\int_\O \lvert \nabla_x \overrightarrow{u}\rvert^2(T,x)~dx\geq0.
\end{aligned}
     \end{align}
From \eqref{I_1 expansion}, $I_{1,2}$ is given by
\begin{align}\label{I12}
    \begin{aligned}
        I_{1,2} &=4\sigma\int_{\O_T}\mathcal{S}(x)\lvert \overrightarrow{u} \rvert^2~dxdt=  4\sigma\int_{\O_T}\left(
        2\lambda \sigma ((x+x_0)\cdot \o)-\sigma^2 ((x+x_0)\cdot \o)^2-3\sigma \right)\lvert \overrightarrow{u} \rvert^2~dxdt.
    \end{aligned}
\end{align}
Since  $x_0\cdot \o=2+\sup_{x\in \O}\lvert x\rvert$, we observe that
\begin{align}
    \begin{aligned}
        (x+x_0)\cdot \o \leq 2\left(1+\sup_{x\in\O}\lvert x\rvert\right)\ \mbox{ and}\ (x+x_0)\cdot \o\geq 2.
    \end{aligned}
\end{align}
Throughout the proof of the Carleman estimate, the above inequalities will be used repeatedly without explicit reference at each step. We now establish the following lower bound for the term $I_{1,2}$, when $\sigma>1$:
\begin{align}
    \begin{aligned}
        I_{1,2}\geq 16\lambda \sigma^2\int_{\O_T}\lvert \overrightarrow{u} \rvert^2~dxdt-8\sigma^3\int_{\O_T} \left(2+2\sup_{x\in\O}\lvert x\rvert\right)^2 \lvert \overrightarrow{u} \rvert^2~dxdt.
    \end{aligned}
\end{align}
Further, we choose $\lambda\geq \sigma \left(2+2\displaystyle\sup_{x\in\O}\lvert x\rvert\right)^2$. With this choice of $\lambda$, the above expression reduces to
\begin{align}
    \begin{aligned}
            I_{1,2}\geq 8\lambda \sigma^2\int_{\O_T}\lvert \overrightarrow{u} \rvert^2~dxdt.
    \end{aligned}
\end{align}
Next, applying integration by parts to the term $I_{1,3}$, we obtain
\begin{align}\label{I13}
    \begin{aligned}
     I_{1,3}&= \int_{\O_T}\mathfrak{Re}\left(\partial_t \overrightarrow{u} \cdot \overline{\mathcal{S}(x) \overrightarrow{u}} \right)~dxdt\\&=  \dfrac{1}{2}\int_{\O_T}\mathcal{S}(x)\partial_t\lvert \overrightarrow{u} \rvert^2~dxdt=\dfrac{1}{2}\int_{\O}\left(\mathcal{S}(x)\lvert \overrightarrow{u} \rvert^2\right)\bigg|_0^T~dx=\dfrac{1}{2}\int_{\O}\mathcal{S}(x)\lvert \overrightarrow{u} \rvert^2(T,x)~dx\\&=\dfrac{1}{2}\int_{\O} \left(2\lambda \sigma ((x+x_0)\cdot \o)-\sigma^2 ((x+x_0)\cdot \o)^2-3\sigma\right) \lvert \overrightarrow{u} \rvert^2(T,x)~dx.
    \end{aligned}
\end{align}
Now, for $\sigma>1$, the lower bound for the above expression is given by
\begin{align}
    \begin{aligned}
      I_{1,3}  &\geq \lambda \sigma \int_{\O} ((x+x_0)\cdot \o)\lvert \overrightarrow{u} \rvert^2(T,x)~dx -\sigma^2\int_{\O} \left(2+2\sup_{x\in\O}\lvert x\rvert\right)^2\lvert \overrightarrow{u} \rvert^2(T,x)~dx \\&\geq 2\lambda \sigma \int_{\O} \lvert \overrightarrow{u} \rvert^2(T,x)~dx -\sigma^2\int_{\O} \left(2+2\sup_{x\in\O}\lvert x\rvert\right)^2\lvert \overrightarrow{u} \rvert^2(T,x)~dx.
    \end{aligned}
\end{align}
Now if we choose $\lambda\geq \sigma \left(2+2\displaystyle\sup_{x\in\O}\lvert x\rvert\right)^2$ then the above inequality reduces to
\begin{align}
    \begin{aligned}
         I_{1,3}\geq \lambda \sigma \int_{\O} \lvert \overrightarrow{u} \rvert^2(T,x)~dx .
    \end{aligned}
\end{align}
Next, applying the integration-by-parts formula and using the boundary condition
$\overrightarrow{u}\big|_{\partial\Omega_T}=\overrightarrow{0}$,
we simplify the term $I_{1,4}$ to obtain
\begin{align}\label{I14}
    \begin{aligned}
        I_{1,4}= -4\sigma\mathfrak{Re} \int_{\O_T}  \overrightarrow{u} \cdot \overline{\Delta_x \overrightarrow{u}}~dxdt=4 \sigma\int_{\O_T} \lvert \nabla_x \overrightarrow{u}\rvert^2~dxdt.
    \end{aligned}
\end{align}
Since the boundary $\partial\Omega$ is smooth and
$\overrightarrow{u}|_{\partial \Omega}=0$ for all $t\in(0,T)$,
the tangential derivatives of $\overrightarrow{u}$ vanish on $\partial\Omega$. Consequently, the gradient $\nabla_x\overrightarrow{u}$ is normal to the boundary, that is,
$\nabla_x\overrightarrow{u}
=
\nu\left(\partial_\nu\overrightarrow{u}\right)$
on $\partial\Omega$,
where $\nu$ denotes the outward unit normal vector to $\partial\Omega$. We will use this observation, together with the integration-by-parts formula, in the subsequent calculations to obtain
\begin{align}\label{I15}
    \begin{aligned}
        I_{1,5}&=2\int_{\O_T} \left[\lambda-\sigma((x+x_0)\cdot\o)\right]\mathfrak{Re}\left((\o\cdot \nabla_x \overrightarrow{u})\cdot \overline{\Delta_x \overrightarrow{u}}\right)~dxdt\\&=2\mathfrak{Re}\int_{\partial\O_T} \left[\lambda-\sigma((x+x_0)\cdot\o)\right](\o\cdot \nabla_x \overrightarrow{u})\cdot \overline{\partial_\nu \overrightarrow{u}}~dS_xdt+2\sigma 
        \int_{\O_T}\lvert\o\cdot \nabla_x \overrightarrow{u}\rvert^2~dxdt
        \\&\qquad -2\mathfrak{Re}\int_{\O_T} \left[\lambda-\sigma((x+x_0)\cdot\o)\right]\left(\nabla_x(\o\cdot \nabla_x \overrightarrow{u})\cdot \overline{\nabla_x \overrightarrow{u} }\right)~dxdt
\\&=2\mathfrak{Re}\int_{\partial\O_T} \left[\lambda-\sigma((x+x_0)\cdot\o)\right](\o\cdot \nu)(\partial_\nu  \overrightarrow{u})\cdot \overline{\partial_\nu \overrightarrow{u}}~dS_xdt+2\sigma 
        \int_{\O_T}\lvert\o\cdot \nabla_x \overrightarrow{u}\rvert^2~dxdt
        \\&\qquad -2\mathfrak{Re}\int_{\O_T} \left[\lambda-\sigma((x+x_0)\cdot\o)\right]\left(\omega \cdot \nabla_x(\lvert \nabla_x \overrightarrow{u}\rvert^2)\right)~dxdt
        \\& =\int_{\partial\O_T} \left[\lambda-\sigma((x+x_0)\cdot\o)\right](\o\cdot \nu) \lvert\partial_\nu \overrightarrow{u}\rvert^2~dS_xdt+2\sigma \int_{\O_T} \lvert \o\cdot \nabla_x \overrightarrow{u}\rvert^2~dxdt-\sigma \int_{\O_T} \lvert \nabla_x \overrightarrow{u}\rvert^2~dxdt.
    \end{aligned}
\end{align}
Next, we choose $\lambda \geq 4\sigma\left(1+\displaystyle\sup_{x\in \O}\lvert x\rvert\right)$ so that the above expression boils down to
\begin{align}
    \begin{aligned}
        I_{1,5}\geq & \dfrac{\lambda}{2}\int_{(0,T)\times B(\o)} \lvert\o\cdot \nu\rvert \lvert\partial_\nu \overrightarrow{u}\rvert^2~dS_xdt-\lambda\int_{(0,T)\times F(\o)} \lvert\o\cdot \nu\rvert \lvert\partial_\nu \overrightarrow{u}\rvert^2~dS_xdt\\&+2\sigma \int_{\O_T} \lvert \o\cdot \nabla_x \overrightarrow{u}\rvert^2~dxdt-\sigma \int_{\O_T} \lvert \nabla_x \overrightarrow{u}\rvert^2~dxdt.
    \end{aligned}
\end{align}
From \eqref{I_1 expansion}, $I_{1,6}$ is given by
\begin{align}
    \begin{aligned}
        I_{1,6}&=-2\int_{\O_T} \mathcal{S}(x)\left[\lambda-\sigma((x+x_0)\cdot\o)\right]\mathfrak{Re}\left((\o\cdot \nabla_x \overrightarrow{u}) \cdot \overline{\overrightarrow{u}}\right)~dxdt\\&=-\int_{\O_T} \mathcal{S}(x)\left[\lambda-\sigma((x+x_0)\cdot\o)\right]\o\cdot \nabla_x(\lvert \overrightarrow{u}\rvert^2)~dxdt\\&=
-\int_{\O_T}
       \left[ 2\lambda \sigma ((x+x_0)\cdot \o)-\sigma^2 ((x+x_0)\cdot \o)^2-3\sigma\right]\left[\lambda-\sigma((x+x_0)\cdot\o)\right]\left(\o\cdot \nabla_x(\lvert \overrightarrow{u}\rvert^2)\right)~dxdt.
    \end{aligned}
\end{align}
Now using integration-by-parts, $I_{1,6}$ simplifies to
\begin{align}\label{I16}
    \begin{aligned}
        I_{1,6}&=\int_{\O_T}\left[
     2\lambda^2\sigma - 6\lambda \sigma^2 (x+x_0)\cdot\o+3\sigma^3((x+x_0)\cdot\o)^2+3\sigma^2
       \right]
       \lvert \overrightarrow{u}\rvert^2~dxdt\\&\geq
       \int_{\O_T}\left[
     2\lambda^2\sigma - 12\lambda \sigma^2 \left(1+\sup_{x\in\O}\lvert x\rvert\right)+12\sigma^3+3\sigma^2
       \right]
       \lvert \overrightarrow{u}\rvert^2~dxdt\\&\geq
      2\sigma \int_{\O_T} \left(\lambda - \frac{\sigma \left(1+\displaystyle\sup_{x\in \O}\lvert x\rvert \right)}{2}\right)^2\lvert \overrightarrow{u}\rvert^2~dxdt.
    \end{aligned}
\end{align}
For $\lambda\geq \sigma \left(1+\displaystyle\sup_{x\in \O}\lvert x\rvert \right)$, a lower-bound for $I_{1,6}$ is given by
\begin{align}
    \begin{aligned}
        I_{1,6}\geq  \dfrac{\lambda^2\sigma}{2}\int_{\O_T}\lvert \overrightarrow{u}\rvert^2~dxdt.
    \end{aligned}
\end{align}
Using the above calculations in Equation \eqref{I_1 expansion}, we get
\begin{align}\label{estimate for I1}
    \begin{aligned}
        I_1&\geq  8\lambda \sigma^2\int_{\O_T}\lvert \overrightarrow{u} \rvert^2~dxdt+\lambda \sigma \int_{\O} \lvert \overrightarrow{u} \rvert^2(T,x)~dx +4 \sigma\int_{\O_T} \lvert \nabla_x \overrightarrow{u}\rvert^2~dxdt\\&\qquad+\dfrac{\lambda}{2}\int_{(0,T)\times B(\o)} \lvert\o\cdot \nu\rvert \lvert\partial_\nu \overrightarrow{u}\rvert^2~dS_xdt-\lambda\int_{(0,T)\times F(\o)} \lvert\o\cdot \nu\rvert \lvert\partial_\nu \overrightarrow{u}\rvert^2~dS_xdt\\&\qquad+2\sigma \int_{\O_T} \lvert \o\cdot \nabla_x \overrightarrow{u}\rvert^2~dxdt-\sigma \int_{\O_T} \lvert \nabla_x \overrightarrow{u}\rvert^2~dxdt+\dfrac{\lambda^2\sigma}{2}\int_{\O_T}\lvert \overrightarrow{u}\rvert^2~dxdt.
    \end{aligned}
\end{align}
Next, the lower bound for $I_{2}$ is estimated as follows
\begin{align}\label{estimate for I2}
    \begin{aligned}
        I_2&=\lVert \mathcal{K}_3 \overrightarrow{u}\rVert^2_{\textbf{L}^2(\Omega_T)}\\&=\int_{\O_T}\left\lvert  M\cdot \nabla_x \overrightarrow{u}+ \left[\lambda-\sigma((x+x_0)\cdot\o)\right](\o\cdot M) \overrightarrow{u}+(\nabla_x\cdot N) \overrightarrow{u}+P\overrightarrow{u}\right\rvert^2~dxdt\\&\leq 8\lVert \overrightarrow{u}\rVert\rVert^2_{\textbf{L}^2(\O_T)}\left((\lambda-2\sigma)^2\lVert M\rVert^2_{\textbf{L}^2(\O_T)}+\lVert \nabla_x\cdot N\rVert^2_{\textbf{L}^2(\O_T)}+\lVert P \rVert^2_{\textbf{L}^2(\O_T)}
        \right)\\&\qquad+8\lVert M \rVert^2_{\textbf{L}^2(\O_T)}\lVert \nabla_x\cdot \overrightarrow{u} \rVert^2_{\textbf{L}^2(\O_T)}.
    \end{aligned}
\end{align}
Using Equations \eqref{estimate for I1} and \eqref{estimate for I2} in \eqref{v dagger lower bound}, we have
\begin{align}
    \begin{aligned}
\lVert\mathcal{K}_{\phi_\sigma}\overrightarrow{u}\rVert^2_{\textbf{L}^2(\Omega_T)}&\geq 8\lambda \sigma^2\int_{\O_T}\lvert \overrightarrow{u} \rvert^2~dxdt+\lambda \sigma \int_{\O} \lvert \overrightarrow{u} \rvert^2(T,x)~dx +4 \sigma\int_{\O_T} \lvert \nabla_x \overrightarrow{u}\rvert^2~dxdt\\&\qquad+\dfrac{\lambda}{2}\int_{(0,T)\times B(\o)} \lvert\o\cdot \nu\rvert \lvert\partial_\nu \overrightarrow{u}\rvert^2~dS_xdt-\lambda\int_{(0,T)\times F(\o)} \lvert\o\cdot \nu\rvert \lvert\partial_\nu \overrightarrow{u}\rvert^2~dS_xdt\\&\qquad+2\sigma \int_{\O_T} \lvert \o\cdot \nabla_x \overrightarrow{u}\rvert^2~dxdt-\sigma \int_{\O_T} \lvert \nabla_x \overrightarrow{u}\rvert^2~dxdt+\dfrac{\lambda^2\sigma}{2}\int_{\O_T}\lvert \overrightarrow{u}\rvert^2~dxdt\\&\qquad-8\lVert \overrightarrow{u}\rVert\rVert^2_{\textbf{L}^2(\O_T)}\left((\lambda-2\sigma)^2\lVert M\rVert^2_{\textbf{L}^2(\O_T)}+\lVert \nabla_x\cdot N\rVert^2_{\textbf{L}^2(\O_T)}+\lVert P \rVert^2_{\textbf{L}^2(\O_T)}
        \right)\\&\qquad -8\lVert M \rVert^2_{\textbf{L}^2(\O_T)}\lVert \nabla_x\cdot \overrightarrow{u} \rVert^2_{\textbf{L}^2(\O_T)} .
    \end{aligned}
\end{align}
The above expression can be simplified to
\begin{align}
    \begin{aligned}
\lVert\mathcal{K}_{\phi_\sigma}\overrightarrow{u}\rVert^2_{\textbf{L}^2(\Omega_T)}  &+\lambda\int_{(0,T)\times F(\o)} \lvert\o\cdot \nu\rvert \lvert\partial_\nu \overrightarrow{u}\rvert^2~dS_xdt\\&\geq \bigg[\lambda^2\left(\dfrac{\sigma}{2}-8\lVert M\rVert^2_{\textbf{L}^2(\O_T)}\right)+\lambda\left(8\sigma^2+32\sigma \lVert M\rVert^2_{\textbf{L}^2(\O_T)}\right)+32\sigma^2\lVert M\rVert^2_{\textbf{L}^2(\O_T)}\\&\qquad-8\lVert \nabla_x\cdot N\rVert^2_{\textbf{L}^2(\O_T)}-8\lVert P \rVert^2_{\textbf{L}^2(\O_T)}\bigg]\lVert \overrightarrow{u}\rVert\rVert^2_{\textbf{L}^2(\O_T)}+\lambda \sigma \int_{\O} \lvert \overrightarrow{u} \rvert^2(T,x)~dx\\&\qquad +\left(\sigma-8\lVert M \rVert^2_{\textbf{L}^2(\O_T)}\right)\lVert \nabla_x\cdot \overrightarrow{u} \rVert^2_{\textbf{L}^2(\O_T)} +\dfrac{\lambda}{2}\int_{(0,T)\times B(\o)} \lvert\o\cdot \nu\rvert \lvert\partial_\nu \overrightarrow{u}\rvert^2~dS_xdt.
    \end{aligned}
\end{align}
Now for $\lambda$ and $\sigma$ large enough, we have
\begin{align}\label{estimate for K in term of sigma}
    \begin{aligned}
\lambda^2\sigma \lVert \overrightarrow{u}\rVert^2_{\textbf{L}^2(\O_T)}&+\sigma\lVert \nabla_x\cdot \overrightarrow{u} \rVert^2_{\textbf{L}^2(\O_T)}+\lambda \sigma\int_{\O} \lvert \overrightarrow{u} \rvert^2(T,x)~dx+\lambda\int_{(0,T)\times B(\o)} \lvert\o\cdot \nu\rvert \lvert\partial_\nu \overrightarrow{u}\rvert^2~dS_xdt\\&\leq C
\lVert\mathcal{K}_{\phi_\sigma}\overrightarrow{u}\rVert^2_{\textbf{L}^2(\Omega_T)}  +C\lambda\int_{(0,T)\times F(\o)} \lvert\o\cdot \nu\rvert \lvert\partial_\nu \overrightarrow{u}\rvert^2~dS_xdt.
    \end{aligned}
\end{align}
Replacing $\overrightarrow{u}$ by $e^{-\phi_\sigma}\overrightarrow{u}$, the estimate in \eqref{estimate for K in term of sigma} becomes
\begin{align}
\begin{aligned}
   &\lambda\int_{(0,T)\times B(\o)} e^{-2\phi_\sigma}\lvert\o\cdot \nu\rvert \lvert\partial_\nu \overrightarrow{u}\rvert^2~dS_xdt 
   +\left(
\lambda^2\sigma\|e^{-\phi_\sigma}\overrightarrow{u}\|^2_{\textbf{L}^2(\Omega_T)} + \sigma\|e^{-\phi_\sigma}(\nabla_x \cdot\overrightarrow{u})\|^2_{\textbf{L}^2(\Omega_T)}\right)\\&\qquad+\lambda \sigma 
\|e^{-\phi_\sigma(T,x)}\overrightarrow{u}(T,x)\|^2_{\textbf{L}^2(\Omega)} \leq C\|e^{-\phi_\sigma}\mathcal{K}_{M,N,P}\overrightarrow{u}\|^2_{\textbf{L}^2(\Omega_T)} +C\lambda\int_{(0,T)\times F(\o)} e^{-2\phi_\sigma}\lvert\o\cdot \nu\rvert \lvert\partial_\nu \overrightarrow{u}\rvert^2~dS_xdt
    \end{aligned}
\end{align}
for $\lambda$ large enough. Since $\exp{\left(\dfrac{\sigma((x+x_0)\cdot \o)^2}{2}\right)}$ has strictly upper and lower bounds, the above inequality reduces to 
\begin{align}
\begin{aligned}
   &\lambda\int_{(0,T)\times B(\o)} e^{-2\phi}\lvert\o\cdot \nu\rvert \lvert\partial_\nu \overrightarrow{u}\rvert^2~dS_xdt 
   +\left(
\lambda^2\|e^{-\phi}\overrightarrow{u}\|^2_{\textbf{L}^2(\Omega_T)} +\|e^{-\phi}(\nabla_x \cdot\overrightarrow{u})\|^2_{\textbf{L}^2(\Omega_T)}\right)\\&+\lambda  
\|e^{-\phi(T,x)}\overrightarrow{u}(T,x)\|^2_{\textbf{L}^2(\Omega)} \leq C\|e^{-\phi}\mathcal{K}_{M,N,P}\overrightarrow{u}\|^2_{\textbf{L}^2(\Omega_T)} +C\lambda\int_{(0,T)\times F(\o)} e^{-2\phi}\lvert\o\cdot \nu\rvert \lvert\partial_\nu \overrightarrow{u}\rvert^2~dS_xdt.
    \end{aligned}
\end{align}
This completes the proof.
\end{proof}
\subsection{Interior Carleman estimates in negative-order Sobolev spaces} 
 As mentioned earlier, the goal of this subsection is to establish interior Carleman estimates in negative-order Sobolev spaces. These estimates are essential for proving the solvability of the weighted operator associated with $\mathcal{K}_{M,N,P}$ (see Proposition~\ref{Proposition 1}). In particular, we derive an $H^{-1}-H^{-2}$ Carleman estimate, which is a key ingredient in constructing geometric optics solutions. First, we introduce several function spaces and auxiliary operators that will be used throughout the subsequent analysis. For $s\in \mathbb{R}$, let us define $\lambda$-dependent Sobolev spaces ${L}^2(0,T;{H}^s_\lambda (\mathbb{R}^n))$ by 
\begin{align*}
L^2(0,T;H^s_{\lambda}(\mathbb{R}^n)):= \left\{w(t,\cdot)\in \mathcal{S}'(\mathbb{R}^n): \langle \lambda,\xi \rangle^{s/2}\widehat{w}(t,\xi) \in L^2((0,T)\times\mathbb{R}^n)\right\}
\end{align*}
endowed with the norm 
\begin{align*}
\|w\|^2_{L^2(0,T;H^s_{\lambda}(\mathbb{R}^n))}:=\int_{0}^{T}\int_{\mathbb{R}^n} \langle  \lambda,\xi \rangle^{s}|\widehat{w}(t,\xi)|^2\ d\xi dt
\end{align*}
 where $\mathcal{S}'(\mathbb{R}^n)$ denotes the space of tempered distributions on $\mathbb{R}^n$ and $\langle \lambda ,\xi\rangle:=\sqrt{\lambda^2+\lvert \xi\rvert^2}$ denotes the \textit{Japanese bracket}. In particular, for any non-negative integer $m$, we have that
 \begin{align}\label{vector semiclassical sobolev space order s} \|\overrightarrow{u}\|^2_{L^2(0,T;\mathbf{H}^{m}_\lambda(\O))}&:= \sum_{i=1}^{k} \sum_{j=0}^m  \lambda^{2(m-j)}\|u_i\|^2_{L^2(0,T;{H}^j(\O))}, \\
 \|\overrightarrow{u}\|^2_{L^2(0,T;\mathbf{H}^{-m}_\lambda(\mathbb{R}^n))}&:= \sum_{i=1}^{k} \lVert (\lambda^2-\Delta_x)^{-m/2} u_i\rVert^2_{L^2(0,T;L^2(\mathbb{R}^n))}.
 \end{align}
From the above definition, we observe that
 \begin{align}
     \begin{aligned}
\|\overrightarrow{u}\|^2_{L^2(0,T;\mathbf{H}^{-2}_\lambda(\mathbb{R}^n))} \leq \dfrac{C}{\lambda}\|\overrightarrow{u}\|^2_{L^2(0,T;\mathbf{H}^{-1}_\lambda(\mathbb{R}^n))}.
     \end{aligned}
 \end{align}
 Also, we introduce the class of symbols of order $m$ for $\lambda\geq \lambda_0>0$ and $\xi\in \mathbb{R}^n$ denoted by $   \mathcal{S}_\lambda^m$ and given by
 \begin{align}
     \begin{aligned}
         \mathcal{S}_\lambda^m:=\left\{a(\cdot,\cdot,\lambda) \in C^{\infty}(\mathbb{R}^n\times\mathbb{R}^n) : \lvert \partial_x^\alpha \partial_\xi^\beta a(x,\xi,\lambda)\rvert\leq C(\alpha,\beta)\langle \lambda,\xi \rangle^{m-\lvert \beta \rvert}\right\}
     \end{aligned}
 \end{align}
 for all $\alpha, \beta \in (\mathbb{N}\cup\{0\})^n$.
Now, let $\mathcal{K}^*_{M,N,P}$ denotes the $L^2$-adjoint operator of $\mathcal{K}_{M,N,P}$ given by
\begin{align}
 \label{Operator K^{*}_{A,Q}}
   \displaystyle \mathcal{K}^*_{M,N,P}\overrightarrow{v} := -\partial_t\overrightarrow{v}-\Delta_x \overrightarrow{v}-[M]^* \cdot\nabla_x \overrightarrow{v}-(\nabla_x\cdot [M]^*)\overrightarrow{v} +\nabla_x\cdot [N]^*\overrightarrow{v}+[P]^*\overrightarrow{v}
\end{align} where $\overrightarrow{v}:= (v_1, v_2, \cdots, v_k)^T$ and $[B]^*$ is the  conjugate transpose of matrix $B$.
In order to obtain the solvability result for $\mathcal{K}_{\phi}\overrightarrow{u}= \overrightarrow{W}$, we require the following $H^{-1}-H^{-2}$ interior Carleman estimate:
\begin{lemma}
Let $\phi$ and $\mathcal{K}^*_{M,N,P}$ are as defined by Equations \eqref{carleman weight} and \eqref{Operator K^{*}_{A,Q}}, respectively. Then for any $\overrightarrow{v} \in {C}^1([0,T];\mathbf{C}^{\infty}_c(\Omega))$ with $\overrightarrow{v}(T,x)=\overrightarrow{0}$, there exists a constant $C>0$, such that the estimate 
    \begin{align}\label{shifted by -1 carleman estimate}
\|\overrightarrow{v}\|_{L^2(0,T;\mathbf{H}^{-1}_\lambda(\mathbb{R}^n))} \leq C  \| \mathcal{K}^*_{\phi} \overrightarrow{v}\|_{L^2(0,T;\mathbf{H}^{-2}_{\lambda}(\mathbb{R}^n))}
\end{align}
holds $\lambda$ large enough.
\end{lemma}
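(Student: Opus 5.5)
The plan is the standard shifting argument of \cite{FerreiraKenigSaloUhlmann2009, KenigSjostrandUhlmann2007}, adapted to the parabolic system as in \cite{choulli2018logarithmic, sahoo2019partial, senapati2021stability}. Here $\mathcal{K}^*_\phi := e^{\phi}\mathcal{K}^*_{M,N,P}e^{-\phi}$. The first step is an interior $L^2$ estimate for the adjoint operator with the opposite sign of the weight. Rerunning the proof of Theorem~\ref{BCE} with $t\mapsto T-t$ and $\o\mapsto-\o$ (so that the condition $\overrightarrow{v}(T,\cdot)=0$ plays the role of the initial condition) gives, for $\overrightarrow{v}$ compactly supported in $x$,
\begin{align*}
\lambda\|\overrightarrow{w}\|_{\mathbf{L}^2(\O_T)}+\|\nabla_x\overrightarrow{w}\|_{\mathbf{L}^2(\O_T)}\le C\|\mathcal{K}^*_{\phi}\overrightarrow{w}\|_{\mathbf{L}^2(\O_T)},
\end{align*}
that is, $\|\overrightarrow{w}\|_{L^2(0,T;\mathbf{H}^1_\lambda)}\le C\|\mathcal{K}^*_\phi\overrightarrow{w}\|_{\mathbf{L}^2}$. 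Only the principal part $-\partial_t-\Delta_x+2\lambda\o\cdot\nabla_x-\lambda^2$ matters here. The convexification $\phi_\sigma$ absorbs the first-order coupling terms $[M]^*\cdot\nabla_x$. The conjugated zeroth-order terms are of size $O(\lambda)$, and they must be absorbed by the $\lambda^2\sigma$ gain, exactly as in \eqref{estimate for K in term of sigma}. Boundary terms vanish by the support assumption.

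The second step shifts this estimate down by one Sobolev index. Let $J:=(\lambda^2-\Delta_x)^{-1/2}$, the operator with symbol $\langle\lambda,\xi\rangle^{-1}\in\mathcal{S}^{-1}_\lambda$. Fix a cutoff $\chi\in C_c^\infty(\mathbb{R}^n)$ with $\chi=1$ near $\overline{\O}$, and a slightly larger open set $\widetilde{\O}\supset\supp\chi$. Apply the $L^2$ estimate on $(0,T)\times\widetilde{\O}$ to $\overrightarrow{w}:=\chi J\overrightarrow{v}$, which still vanishes at $t=T$. Since $\chi=1$ on $\supp\overrightarrow{v}$, pseudolocality gives
\begin{align*}
\|\overrightarrow{v}\|_{L^2(0,T;\mathbf{H}^{-1}_\lambda)}\le\|\chi J\overrightarrow{v}\|_{\mathbf{L}^2}+\|(1-\chi)J\overrightarrow{v}\|_{\mathbf{L}^2}.
\end{align*}
The operator $(1-\chi)J\chi_1$, with $\chi_1=1$ on $\O$ and $\supp\chi_1\subset\{\chi=1\}$, has symbol $O(\lambda^{-\infty})$. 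Hence the last term is at most $C\lambda^{-N}\|\overrightarrow{v}\|_{L^2(0,T;\mathbf{H}^{-1}_\lambda)}$ and can be absorbed.

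The third step handles the commutator:
\begin{align*}
\mathcal{K}^*_\phi(\chi J\overrightarrow{v})=\chi J\mathcal{K}^*_\phi\overrightarrow{v}+[\mathcal{K}^*_\phi,\chi J]\overrightarrow{v}.
\end{align*}
Because $\partial_t$ commutes with $\chi J$, the commutator involves only the spatial part. The principal terms $-\Delta_x+2\lambda\o\cdot\nabla_x-\lambda^2$ have constant coefficients. So $[\,\cdot\,,\chi J]$ equals $[\,\cdot\,,\chi]J$, whose symbol is of order $1+(-1)=0$ in $\mathcal{S}^0_\lambda$, with gain coming only from derivatives of $\chi$. This piece is supported where $\nabla\chi\neq0$, away from $\supp\overrightarrow{v}$, and is therefore again $O(\lambda^{-\infty})$ on $\overrightarrow{v}$. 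The lower-order part comes from conjugating $[M]^*\cdot\nabla_x$, $\nabla_x\cdot[M]^*$, $[N]^*$ and $[P]^*$, which produces the matrix terms $[M]^*\cdot\nabla_x$ and $\lambda(\o\cdot[M]^*)$ plus bounded ones. Its commutator with $J$ is a composition calculus remainder in $\mathcal{S}^{0}_\lambda$ and $\mathcal{S}^{-1}_\lambda\cdot\lambda$ respectively. This needs smooth coefficients, so first extend the coefficients and mollify them. The result is a bound by $C\|\overrightarrow{v}\|_{L^2(0,T;\mathbf{H}^{-1}_\lambda)}$ with a constant independent of $\lambda$.

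The main obstacle is exactly this constant. The $\lambda$-weighted first-order coupling term $\lambda(\o\cdot[M]^*)$ yields a commutator of size $O(1)\cdot\|\overrightarrow{v}\|_{\mathbf{H}^{-1}_\lambda}$, not $o(1)$. Simply moving it to the left-hand side therefore fails unless the first step is strengthened to
\begin{align*}
\sqrt{\sigma}\,\|\overrightarrow{w}\|_{\mathbf{H}^1_\lambda}\le C\|\mathcal{K}^*_{\phi_\sigma}\overrightarrow{w}\|.
\end{align*}
The first thing to try is to keep $\sigma$ explicit, as in \eqref{estimate for K in term of sigma}, and fix $\sigma$ large, independent of $\lambda$, to absorb the $O(1)$ commutator. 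One then returns from $\phi_\sigma$ to $\phi$ using the two-sided bounds on $e^{\sigma((x+x_0)\cdot\o)^2/2}$. Multiplication by this smooth function is bounded on $\mathbf{H}^{\pm1}_\lambda$ and $\mathbf{H}^{-2}_\lambda$, uniformly in $\lambda$. Finally, the bound $\|J\,\cdot\|_{\mathbf{L}^2}=\|\cdot\|_{\mathbf{H}^{-1}_\lambda}$ applied to $\mathcal{K}^*_\phi\overrightarrow{v}$ lands in $\mathbf{H}^{-2}_\lambda$ after the index shift, which gives \eqref{shifted by -1 carleman estimate}.
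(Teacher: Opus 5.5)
There is a genuine gap in the index bookkeeping. With $J=(\lambda^2-\Delta_x)^{-1/2}$ you shift the $\mathbf{H}^1_\lambda$--$\mathbf{L}^2$ estimate by only one Sobolev index, so you cannot reach an $H^{-1}$--$H^{-2}$ estimate.

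Apply step one to $\chi J\overrightarrow{v}$ and read the left side through its $\lambda\|\cdot\|_{\mathbf{L}^2}$ part, as you do. You get $\lambda\|\overrightarrow{v}\|_{L^2(0,T;\mathbf{H}^{-1}_\lambda)}\le C\|\mathcal{K}^*_\phi\overrightarrow{v}\|_{L^2(0,T;\mathbf{H}^{-1}_\lambda)}$. If you keep the full $\mathbf{H}^1_\lambda$ norm instead, you get $\|\overrightarrow{v}\|_{\mathbf{L}^2(\Omega_T)}\le C\|\mathcal{K}^*_\phi\overrightarrow{v}\|_{L^2(0,T;\mathbf{H}^{-1}_\lambda)}$.

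Your final sentence claims that $\|J\mathcal{K}^*_\phi\overrightarrow{v}\|_{\mathbf{L}^2}$ "lands in $\mathbf{H}^{-2}_\lambda$". It does not: it is exactly $\|\mathcal{K}^*_\phi\overrightarrow{v}\|_{\mathbf{H}^{-1}_\lambda}$. Neither estimate above implies the lemma. From $\langle\lambda,\xi\rangle^{-1}\ge\lambda\langle\lambda,\xi\rangle^{-2}$ you only get $\lambda\|f\|_{\mathbf{H}^{-2}_\lambda}\le\|f\|_{\mathbf{H}^{-1}_\lambda}$. The reverse inequality fails at frequencies $|\xi|\gg\lambda$.

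This is not cosmetic. The duality argument of Proposition~\ref{Proposition 1} needs the $H^{-1}$--$H^{-2}$ form to produce $\overrightarrow{u}\in L^2(0,T;\mathbf{H}^2_\lambda)$ from $\overrightarrow{W}\in L^2(0,T;\mathbf{H}^1)$. A symptom of the mismatch: in your setup, the $O(1)$ commutator you flag as the main obstacle would simply be absorbed by the factor $\lambda$ on the left.

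The repair is to shift by two indices, as the paper does. Apply the $\sigma$-explicit estimate $\sigma^{1/2}\|\overrightarrow{w}\|_{L^2(0,T;\mathbf{H}^1_\lambda)}\le C\|(\mathcal{K}_1^-+\mathcal{K}_2^-)\overrightarrow{w}\|_{\mathbf{L}^2}$, for the convexified principal part, to $\overrightarrow{w}=\varphi_2(\lambda^2-\Delta_x)^{-1}\overrightarrow{v}$. The left side then controls $\|\overrightarrow{v}\|_{\mathbf{H}^{-1}_\lambda}$, and $(\lambda^2-\Delta_x)^{-1}$ on the right produces the $\mathbf{H}^{-2}_\lambda$ norm. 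Your cutoff and pseudolocality arguments carry over to this setting.

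Two points then need care that your sketch skips:
\begin{enumerate}
\item After convexification, the principal part has $\sigma$-dependent variable coefficients, so $(\lambda^2-\Delta_x)^{-1}$ no longer commutes with it. The composition remainder is $O(\sigma^2\lambda^{-1})\|\overrightarrow{v}\|_{\mathbf{H}^{-1}_\lambda}$ and is absorbed for $\lambda\gg\sigma^2$.
\item The paper estimates the coupling block $\mathcal{K}_3^-$ directly rather than through commutators: $\|\mathcal{K}_3^-\overrightarrow{v}\|_{\mathbf{H}^{-2}_\lambda}\le C\|\overrightarrow{v}\|_{\mathbf{H}^{-1}_\lambda}$, via Peetre's inequality and the $H^r$ regularity of the coefficients, so no mollification is needed. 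This is where the term $\lambda(\omega\cdot[M]^*)$ is genuinely of size $O(1)$. It is absorbed by $\sigma^{1/2}$ with $\sigma$ fixed large, which is the device you anticipated.
\end{enumerate}
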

\begin{proof}
 The idea of the proof is inspired from \cite{Bellassoued_2021, Bellassoued01102006, sahoo2019partial}. Consider the modified Carleman weight
\begin{align}\label{modified Carleman weight in negative order sobolev space}
    \phi_{\sigma}^-=- \phi-\dfrac{\sigma((x+x_0)\cdot \o)^2}{2}, \qquad \sigma>1, 
\end{align}
and the conjugate adjoint operator
\begin{align*}
e^{-\phi_{\sigma}^-}\mathcal{K}^*_{M,N,P} e^{\phi_{\sigma}^-}\overrightarrow{v} = &  (-\partial_t \overrightarrow{v} - \Delta_x \overrightarrow{v}) + 2\left[\lambda+\sigma((x+x_0)\cdot\o)\right]\o\cdot \nabla_x \overrightarrow{v} + \sigma\overrightarrow{v}\\
    & - \left[2\lambda \sigma ((x+x_0)\cdot \o)+\sigma^2 ((x+x_0)\cdot \o)^2\right]\overrightarrow{v}\\&+ \left[\lambda+\sigma((x+x_0)\cdot\o)\right](\o\cdot [M]^*)\overrightarrow{v} \\&- [M]^*\cdot \nabla_x\overrightarrow{v}-(\nabla_x\cdot [M]^*) \overrightarrow{v}+\nabla_x\cdot [N]^*\overrightarrow{v}+[P]^*\overrightarrow{v}
    \\
    := &\mathcal{K}_1^-\overrightarrow{v} + \mathcal{K}_2^-\overrightarrow{v} + \mathcal{K}_3^-\overrightarrow{v}
\end{align*}
where 
\begin{align}\label{equations for k1-k2-k3-}
\begin{aligned}
\mathcal{K}_1^- & := -\partial_t  + 2\left[\lambda+\sigma((x+x_0)\cdot\o)\right]\o\cdot \nabla_x+4 \sigma, \\\mathcal{K}_2^- & :=
- \Delta_x-2\lambda \sigma ((x+x_0)\cdot \o)-\sigma^2 ((x+x_0)\cdot \o)^2-3\sigma,
\\
\mathcal{K}_3^- & :=  -[M]^*\cdot \nabla_x + \left[\lambda+\sigma((x+x_0)\cdot\o)\right](\o\cdot [M]^*) -(\nabla_x\cdot [M]^*) +\nabla_x\cdot [N]^*+[P]^*.
\end{aligned}
\end{align}
We denote the symbols of $\mathcal{K}_1^-$, $\mathcal{K}_2^-$ by $\mathtt{k_1^-}$ and $\mathtt{k_2^-}$ respectively, and are given by
\begin{align}\label{symbols for k1 and k2}
    \begin{aligned}
       \mathtt{k_1^-}&=-\mathrm{i}\tau+2\mathrm{i}\left[\lambda+\sigma((x+x_0)\cdot\o)\right]\o\cdot\xi+4\sigma,\\
         \mathtt{k_2^-}&=\lvert\xi\rvert^2-2\lambda \sigma ((x+x_0)\cdot \o)-\sigma^2 ((x+x_0)\cdot \o)^2-3\sigma.
    \end{aligned}
\end{align}
Let $\widetilde{\Omega} \subset \mathbb{R}^n$ be a bounded, open and smooth set such that $\Omega$ is relatively compact in $\widetilde{\Omega}$. Then, 
from \eqref{v dagger lower bound} and \eqref{estimate for K in term of sigma}, we observe that for any $\overrightarrow{u} \in {C}^1([0,T];{\mathbf{C}}^{\infty}_c(\widetilde{\Omega}))$ with $\overrightarrow{u}\lvert_{t=0}=\overrightarrow{0}$, there exist $\sigma>1$ such that 
\begin{align}
    \begin{aligned}
\sigma\|\overrightarrow{u}\|^2_{L^2(0,T;\mathbf{H}^1_{\lambda}(\mathbb{R}^n))}\leq C\|(\mathcal{K}_{1}+\mathcal{K}_2)\overrightarrow{u}\|^2_{L^2(0,T;\textbf{L}^2(\mathbb{R}^n))} 
    \end{aligned}
\end{align}
holds for $\lambda$ large enough, where $\mathcal{K}_1$ and $\mathcal{K}_2$ are given by Equation \eqref{eq for P}. Now, from an analogue calculation used to derive \eqref{estimate for K in term of sigma}, we obtain a similar estimate for the conjugate operator $e^{-\phi_{\sigma}^-}\mathcal{K}^*_{0,0,0} e^{\phi_{\sigma}^-}$. For any $\overrightarrow{w}\in {C}^1([0,T];{\mathbf{C}}^{\infty}_c(\widetilde{\Omega}))$ with $\overrightarrow{w}\lvert_{t=T}=\overrightarrow{0}$, there exist $\sigma>1$ such that 
\begin{align}\label{interior carleman for k star in term of sigma}
    \begin{aligned}
\sigma\|\overrightarrow{w}\|^2_{L^2(0,T;\mathbf{H}^1_{\lambda}(\mathbb{R}^n))}\leq C\|(\mathcal{K}_{1}^-+\mathcal{K}_2^-)\overrightarrow{w}\|^2_{L^2(0,T;\textbf{L}^2(\mathbb{R}^n))} 
    \end{aligned}
\end{align}
holds for $\lambda$ large enough, where $\mathcal{K}_1^-$ and $\mathcal{K}_2^-$ are given by Equation \eqref{equations for k1-k2-k3-}. Next, we consider an intermediate open subset $\O_0$, relatively compact in $\widetilde{\O}$ such that $\overline{\O}\subset \O_0\subset \widetilde{\O}$ and cut-off functions $\varphi_1,\varphi_2 \in C_c^\infty(\mathbb{R}^n;[0,1])$ with 
$\supp~ \varphi_1\subset\O_0$, $\supp~ \varphi_2\subset\widetilde{\O}$
and 
\begin{align}\label{cutoff functions}
    \begin{aligned}
        \varphi_1&=1, \text{ on } \overline{\O},\\
        \varphi_2&=1, \text{ on } \overline{\O_0}.
    \end{aligned}
\end{align}
Note that for any $\overrightarrow{v}\in {C}^1([0,T];{\mathbf{C}}^{\infty}_c(\Omega))$, the operator 
$\varphi_2 (\lambda^2-\Delta_x)^{-1}\overrightarrow{v}\in  {C}^1([0,T];{\mathbf{C}}^{\infty}_c(\widetilde{\Omega}))$. Also, $\varphi_2 (\lambda^2-\Delta_x)^{-1}\overrightarrow{v}(T,\cdot)=\overrightarrow{0}$ with $\supp(\varphi_2 (\lambda^2-\Delta_x)^{-1}v_i)\subset \supp(\varphi_2)\subset \widetilde{\O}$ for each $1\leq i\leq k$. 
Next, we derive suitable upper and lower bounds for $\varphi_2 (\lambda^2-\Delta_x)^{-1}\overrightarrow{v}$ for further analysis. 
We start with introducing the commutator bracket $[\cdot,\cdot]$, defined as 
\(
    [A,B]:=AB-BA.
\)
For $(\lambda^2-\Delta_x)^{-1}\in \mathcal{S}_\lambda^{-2}$ and $\varphi_2 \in \mathcal{S}_\lambda^{0}$, the commutator has the following estimate
\begin{align}\label{commutator bracket norm}
    \begin{aligned}
        \left\lVert [(\lambda^2-\Delta_x)^{-1}, \varphi_2]\overrightarrow{v}\right\rVert_{\mathbf{H}^1_{\lambda}(\mathbb{R}^n)}\leq C  \lVert \overrightarrow{v}\rVert_{\mathbf{H}^{-2}_{\lambda}(\mathbb{R}^n)}\leq \dfrac{C}{\lambda}  \lVert \overrightarrow{v}\rVert_{\mathbf{H}^{-1}_{\lambda}(\mathbb{R}^n)}.
    \end{aligned}
\end{align}
Using
$[(\lambda^2-\Delta_x)^{-1}, \varphi_2]=(\lambda^2-\Delta_x)^{-1} \varphi_2-\varphi_2 (\lambda^2-\Delta_x)^{-1}$ and \eqref{commutator bracket norm} along with the second triangle inequality, we obtain
\begin{align}\label{upper bound estimate for varphi2}
    \begin{aligned}
     \lVert   \varphi_2 (\lambda^2-\Delta_x)^{-1}\overrightarrow{v}\rVert_{L^2(0,T;\mathbf{H}^1_{\lambda}(\mathbb{R}^n))}&= \lVert 
(\lambda^2-\Delta_x)^{-1} \varphi_2 \overrightarrow{v}-
     [(\lambda^2-\Delta_x)^{-1}, \varphi_2]\overrightarrow{v}\rVert_{L^2(0,T;\mathbf{H}^1_{\lambda}(\mathbb{R}^n))}\\&\geq \lVert\overrightarrow{v}\rVert_{L^2(0,T;\mathbf{H}^{-1}_{\lambda}(\mathbb{R}^n))}-\dfrac{C}{\lambda}  \lVert \overrightarrow{v}\rVert_{\mathbf{H}^{-1}_{\lambda}(\mathbb{R}^n)}\\&\geq \lVert\overrightarrow{v}\rVert_{L^2(0,T;\mathbf{H}^{-1}_{\lambda}(\mathbb{R}^n))}
    \end{aligned}
\end{align}
for $\lambda$ large enough. Now to derive the  upper bound for $\varphi_2 (\lambda^2-\Delta_x)^{-1}$, we insert $\varphi_2 (\lambda^2-\Delta_x)^{-1}\overrightarrow{v}$ in-place of $\overrightarrow{w}$ in Carleman estimate given by Equation \eqref{interior carleman for k star in term of sigma} to arrive at  
\begin{align}\label{3.15}
    \begin{aligned}
&\sigma^{1/2}\|\varphi_2 (\lambda^2-\Delta_x)^{-1}\overrightarrow{v}\|_{L^2(0,T;\mathbf{H}^1_{\lambda}(\mathbb{R}^n))}\\&\leq C\|(\mathcal{K}_{1}^-+\mathcal{K}_2^-)\varphi_2 (\lambda^2-\Delta_x)^{-1}\overrightarrow{v}\|_{L^2(0,T;\mathbf{L}^2(\mathbb{R}^n))} \\&\leq C\|\left([\mathcal{K}_{1}^-+\mathcal{K}_{2}^-,\varphi_2]+\varphi_2(\mathcal{K}_{1}^-+\mathcal{K}_2^-) \right)(\lambda^2-\Delta_x)^{-1}\overrightarrow{v}\|_{L^2(0,T;\mathbf{L}^2(\mathbb{R}^n))} \\&\leq C\lVert [\mathcal{K}_{1}^-+\mathcal{K}_{2}^-,\varphi_2](\lambda^2-\Delta_x)^{-1}\overrightarrow{v}\|_{L^2(0,T;\mathbf{L}^2(\mathbb{R}^n))} +C\lVert (\mathcal{K}_{1}^-+\mathcal{K}_2^-) (\lambda^2-\Delta_x)^{-1}\overrightarrow{v}\|_{L^2(0,T;\mathbf{L}^2(\mathbb{R}^n))} 
    \end{aligned}
\end{align}
From a simple calculation, we observe that
\begin{align}
    \begin{aligned}
        [\mathcal{K}_{1}^-+\mathcal{K}_{2}^- ,\varphi_2]=2\left[\lambda+\sigma((x+x_0)\cdot\o)\right](\o\cdot\nabla_x \varphi_2)-(\Delta_x\varphi_2)-2 \nabla_x\varphi_2\cdot \nabla_x
    \end{aligned}
\end{align}
is an operator of order $1$. Thus, the upper bound of the first term in the last inequality \eqref{3.15} is given by
\begin{align}
    \begin{aligned}
\lVert [\mathcal{K}_{1}^-+\mathcal{K}_{2}^-,\varphi_2](\lambda^2-\Delta_x)^{-1}\overrightarrow{v}\|_{L^2(0,T;\mathbf{L}^2(\mathbb{R}^n))}&= \lVert [\mathcal{K}_{1}^-+\mathcal{K}_{2}^-,\varphi_2](\lambda^2-\Delta_x)^{-1}(\varphi_1\overrightarrow{v})\|_{L^2(0,T;\mathbf{L}^2(\mathbb{R}^n))}\\&\leq\lVert [\mathcal{K}_{1}^-+\mathcal{K}_{2}^-,\varphi_2]\varphi_1(\lambda^2-\Delta_x)^{-1}\overrightarrow{v}\|_{L^2(0,T;\mathbf{L}^2(\mathbb{R}^n))}\\&\qquad+\lVert [\mathcal{K}_{1}^-+\mathcal{K}_{2}^-,\varphi_2][(\lambda^2-\Delta_x)^{-1},\varphi_1]\overrightarrow{v}\|_{L^2(0,T;\mathbf{L}^2(\mathbb{R}^n))}
    \end{aligned}
\end{align}
From \eqref{cutoff functions}, we know that $\varphi_2=1$ on $\overline{\O_0}$ therefore $[\mathcal{K}_{1}^-+\mathcal{K}_{2}^-,\varphi_2]=0$ in $(0,T)\times \O_0$. Also, due to the support condition on $\varphi_1$, we can see that 
$\supp ~\varphi_1(\lambda^2-\Delta_x)^{-1}{v_i} \subset \O_0$
 for $1\leq i\leq k$. Using these observations, we have
 \begin{align*}
      [\mathcal{K}_{1}^-+\mathcal{K}_{2}^-,\varphi_2]\varphi_1(\lambda^2-\Delta_x)^{-1}\overrightarrow{v}=\overrightarrow{0} \text{ on } (0,T)\times \mathbb{R}^n.
 \end{align*}
Next, using the fact that $[\mathcal{K}_{1}^-+\mathcal{K}_{2}^- ,\varphi_2]$ and $[(\lambda^2-\Delta_x)^{-1},\varphi_1]$ having symbols of order $1$ and $-3$ respectively, we have 
\begin{align}
    \lVert [\mathcal{K}_{1}^-+\mathcal{K}_{2}^-,\varphi_2][(\lambda^2-\Delta_x)^{-1},\varphi_1]\overrightarrow{v}\|_{L^2(0,T;\mathbf{L}^2(\mathbb{R}^n))}\leq C\lVert\overrightarrow{v}\rVert_{L^2(0,T;\mathbf{H}^{-2}_\lambda(\mathbb{R}^n))}\leq \dfrac{C}{\lambda}  \lVert \overrightarrow{v}\rVert_{L^2(0,T;\mathbf{H}^{-1}_{\lambda}(\mathbb{R}^n)}.
\end{align}
Therefore, we can conclude that
\begin{align}
    \begin{aligned}
        \lVert [\mathcal{K}_{1}^-+\mathcal{K}_{2}^-,\varphi_2](\lambda^2-\Delta_x)^{-1}\overrightarrow{v}\|_{L^2(0,T;\mathbf{L}^2(\mathbb{R}^n))}\leq \dfrac{C}{\lambda}\lVert \overrightarrow{v}\rVert_{L^2(0,T;\mathbf{H}^{-1}_{\lambda}(\mathbb{R}^n)}.
    \end{aligned}
\end{align}
Using the above estimate in \eqref{3.15} will leads to
\begin{align}
    \begin{aligned}
&\sigma^{1/2}\|\varphi_2 (\lambda^2-\Delta_x)^{-1}\overrightarrow{v}\|_{L^2(0,T;\mathbf{H}^1_{\lambda}(\mathbb{R}^n))}\leq C\lVert (\mathcal{K}_{1}^-+\mathcal{K}_2^-) (\lambda^2-\Delta_x)^{-1}\overrightarrow{v}\|_{L^2(0,T;\mathbf{L}^2(\mathbb{R}^n))}. 
    \end{aligned}
\end{align}
Combining the above estimate with \eqref{upper bound estimate for varphi2}, we get
\begin{align}\label{3.21}
    \begin{aligned}
\sigma^{1/2}\lVert\overrightarrow{v}\rVert_{L^2(0,T;\mathbf{H}^{-1}_{\lambda}(\mathbb{R}^n))}    \leq C\lVert (\mathcal{K}_{1}^-+\mathcal{K}_2^-) (\lambda^2-\Delta_x)^{-1}\overrightarrow{v}\|_{L^2(0,T;\mathbf{L}^2(\mathbb{R}^n))} 
    \end{aligned}
\end{align}
To obtain the desired bound, we further simplify the above estimate using the composition of pseudo-differential operators. We observe that 
\begin{align}\label{3.20}
    \begin{aligned}
    (\lambda^2-\Delta_x)^{-1}  (\mathcal{K}_{1}^-+\mathcal{K}_2^-) (\lambda^2-\Delta_x)=(\mathcal{K}_{1}^-+\mathcal{K}_2^-)  + \mathcal{R^-}(x,D,\lambda),
    \end{aligned}
\end{align}
where the symbol of $\mathcal{R^-}$, say  $\mathtt{r^-}$, is given by
\begin{align}
    \begin{aligned}
        \mathtt{r^-}(x,\xi,\lambda)&=\dfrac{1}{\mathrm{i}}\nabla_\xi(\lambda^2+\lvert \xi\rvert^2)^{-1}\cdot \nabla_x(\mathtt{k_1^-}+\mathtt{k_2^-}) (\lambda^2+\lvert \xi\rvert^2) \text{ mod}(\sigma^2\times\mathcal{S}_{\lambda}^{-1})\\
        &=-4\mathrm{i}\dfrac{\left( \lambda\sigma+\sigma^2(x+x_0)\cdot\o-\mathrm{i}\sigma \o\cdot\xi\right)}{(\lambda^2+\lvert \xi\rvert^2)}(\o\cdot\xi)\text{ mod}(\sigma^2\times\mathcal{S}_{\lambda}^{-1})
    \end{aligned}
\end{align}
where $\mathtt{k_1^-}$ and $\mathtt{k_2^-}$ are given by Equation \eqref{symbols for k1 and k2}. Thus, we have 
\begin{align}
    \begin{aligned}
        \lVert \mathcal{R^-}(\lambda^2-\Delta_x)^{-1}\overrightarrow{v}\rVert_{L^2(0,T;\mathbf{L}^2(\mathbb{R}^n)}&\leq C\sigma^2\lVert (\lambda^2-\Delta_x)^{-1}\overrightarrow{v}\rVert_{L^2(0,T;\mathbf{L}^2(\mathbb{R}^n)}\\&\leq C\sigma^2\lVert \overrightarrow{v}\rVert_{L^2(0,T;\mathbf{H}^{-2}_\lambda(\mathbb{R}^n)}\leq C\dfrac{\sigma^2}{\lambda}\lVert \overrightarrow{v}\rVert_{L^2(0,T;\mathbf{H}^{-1}_\lambda(\mathbb{R}^n)}.
    \end{aligned}
\end{align}
Using above estimate along with \eqref{3.20}, we have 
\begin{align}
    \begin{aligned}
    \lVert    (\mathcal{K}_{1}^-+&\mathcal{K}_2^-)(\lambda^2-\Delta_x)^{-1}\overrightarrow{v}\rVert_{L^2(0,T;\mathbf{L}^2(\mathbb{R}^n)}\\&\leq \lVert (\lambda^2-\Delta_x)^{-1}(\mathcal{K}_{1}^-+\mathcal{K}_2^-)\overrightarrow{v}\rVert_{L^2(0,T;\mathbf{L}^2(\mathbb{R}^n)}+   \lVert \mathcal{R^-}(\lambda^2-\Delta_x)^{-1}\overrightarrow{v}\rVert_{L^2(0,T;\mathbf{L}^2(\mathbb{R}^n)}\\&\leq \lVert (\mathcal{K}_{1}^-+\mathcal{K}_2^-)\overrightarrow{v}\rVert_{L^2(0,T;\mathbf{H}^{-2}_\lambda(\mathbb{R}^n)}+C\dfrac{\sigma^2}{\lambda}\lVert \overrightarrow{v}\rVert_{L^2(0,T;\mathbf{H}^{-1}_\lambda(\mathbb{R}^n)}.
    \end{aligned}
\end{align}
Using the above estimate in \eqref{3.21}, we arrive at
\begin{align}\label{3.22}
    \begin{aligned}
\sigma^{1/2}\lVert\overrightarrow{v}\rVert_{L^2(0,T;\mathbf{H}^{-1}_{\lambda}(\mathbb{R}^n))}&\leq C \left(\lVert (\mathcal{K}_{1}^-+\mathcal{K}_2^-)\overrightarrow{v}\rVert_{L^2(0,T;\mathbf{H}^{-2}_\lambda(\mathbb{R}^n)}+\dfrac{\sigma^2}{\lambda}\lVert \overrightarrow{v}\rVert_{L^2(0,T;\mathbf{H}^{-1}_\lambda(\mathbb{R}^n)}\right)\\&\leq C\lVert (\mathcal{K}_{1}^-+\mathcal{K}_2^-)\overrightarrow{v}\rVert_{L^2(0,T;\mathbf{H}^{-2}_\lambda(\mathbb{R}^n)}\\& \leq C\bigg(\lVert \underbrace{(\mathcal{K}_{1}^-+\mathcal{K}_2^-+
\mathcal{K}_{3}^-)}_{:=\mathcal{K}^*_{\phi_{\sigma}^-}}\overrightarrow{v}\rVert_{L^2(0,T;\mathbf{H}^{-2}_\lambda(\mathbb{R}^n)}+\lVert 
\mathcal{K}_{3}^-\overrightarrow{v}\rVert_{L^2(0,T;\mathbf{H}^{-2}_\lambda(\mathbb{R}^n)}\bigg)
    \end{aligned}
\end{align}
where $\dfrac{\sigma^2}{\lambda}\lVert \overrightarrow{v}\rVert_{L^2(0,T;\mathbf{H}^{-1}(\mathbb{R}^n)}$ absorbs into the left-hand-side of above inequality for $\lambda$ large enough. Next, we simplify the last term in the above expression. Recall that
\begin{align*}
    \mathcal{K}_3^-  :=  -[M]^*\cdot \nabla_x + \left[\lambda+\sigma((x+x_0)\cdot\o)\right](\o\cdot [M]^*) -(\nabla_x\cdot [M]^*) +\nabla_x\cdot [N]^*+[P]^*.
\end{align*}
Thus, we have
\begin{align}\label{3.23}
    \begin{aligned}
        \lVert 
\mathcal{K}_{3}^-\overrightarrow{v}\rVert_{L^2(0,T;\mathbf{H}^{-2}_\lambda(\mathbb{R}^n)}^2 &\leq C\sum_{i,j=1}^k\Bigg(  \big\lVert \overline{\overrightarrow{M}_{ij}}\cdot \nabla_x v_i\big\rVert^2_{L^2(0,T;{H}^{-2}_\lambda(\mathbb{R}^n)}+\big\lVert(\nabla_x\cdot \overline{\overrightarrow{M}_{ij}})v_i\big\rVert^2_{L^2(0,T;{H}^{-2}_\lambda(\mathbb{R}^n)}\\&\qquad\qquad+\big\lVert(\nabla_x\cdot \overline{\overrightarrow{N}_{ij}})v_i\big\rVert^2_{L^2(0,T;{H}^{-2}_\lambda(\mathbb{R}^n)} + \big\lVert \overline{p_{ij}}v_i\big\rVert^2_{L^2(0,T;{H}^{-2}_\lambda(\mathbb{R}^n)}\\&\qquad\qquad+\big\lVert\lambda(\overline{\o\cdot \overrightarrow{M}_{ij}})v_i\big\rVert^2_{L^2(0,T;{H}^{-2}_\lambda(\mathbb{R}^n)}\Bigg)
    \end{aligned}
\end{align}
for $\lambda \geq 2\sigma \left(1+\displaystyle\sup_{x\in \O}\lvert x \rvert\right)$.
Next, it is required to derive the upper bound for each term on the right-hand side of the above expression to obtain the desired bound. We derive an upper bound for the general term. Let  $X \in H^r(\O_T,\mathbb{C})$ for  $r>\dfrac{n+1}{2}+2$. From the extension theorem, there exists a bounded linear operator $\mathcal{B}:  H^r(\O_T) \longrightarrow H^r(\mathbb{R}^{1+n})$ such that
\begin{align*}
    \mathcal{B}( X)=X\text{ in } \O_T, \text{ and } \lVert \mathcal{B}( X)\rVert_{H^r(\mathbb{R}^{1+n})}\leq \lVert X\rVert_{H^r(\O_T)}.
\end{align*}
 For $w\in C_c^{\infty}(\O_T)$, we have
\begin{align}
    \begin{aligned}
        \widehat{\mathcal{B}( X)w}(\tau,\xi)&= \widehat{\mathcal{B}( X)}*\widehat{w}(\tau,\xi).
    \end{aligned}
\end{align}
After multiplying the above expression by $\langle\lambda, (\tau,\xi)\rangle^{-2}$ and using Peetre's inequality, we arrive at the following estimate
\begin{align}
    \begin{aligned}
   &\left\lvert \langle\lambda, (\tau,\xi)\rangle^{-2}    \widehat{\mathcal{B}( X)w}(\tau,\xi)\right\rvert\\&\qquad=  \left\lvert\langle\lambda, (\tau,\xi)\rangle^{-2} \int_{\mathbb{R}^{1+n}}\widehat{\mathcal{B}( X)}(\tau-t,\xi-x) \widehat{w}(t,x)~dxdt\right\rvert\\&\qquad\leq \int_{\mathbb{R}^{1+n}}\langle\lambda, (\tau,\xi)\rangle^{-2} \langle\lambda,(t,x)\rangle^2 \langle\lambda,(t,x)\rangle^{-2} \left\lvert \widehat{\mathcal{B}( X)}\right\rvert(\tau-t,\xi-x) \left\lvert\widehat{w}\right\rvert(t,x)~dxdt\\&\qquad\leq 4\int_{\mathbb{R}^{1+n}} \left(1+\lvert (\tau-t,\xi-x)\rvert^2\right)\left\lvert \widehat{\mathcal{B}( X)}\right\rvert(\tau-t,\xi-x) \langle\lambda,(t,x)\rangle^{-2}\left\lvert\widehat{w}\right\rvert(t,x)~dxdt
    \end{aligned}
\end{align}
where $\langle\cdot,\cdot\rangle$ denotes the Japanese bracket.
Taking the $L^2(\mathbb{R}^{1+n})$ norm in the above expression and using Young's inequality, we obtain
\begin{align}
    \begin{aligned}
   \left\lVert     \widehat{\mathcal{B}( X)w}\right\rVert_{L^2(0,T;{H}^{-2}_\lambda(\mathbb{R}^n)}&\leq \left\lVert \left(1+\lvert (\tau-t,\xi-x)\rvert^2\right)\widehat{\mathcal{B}( X)}(\tau-t,\xi-x)  \right\rVert_{L^1(\mathbb{R}^{1+n})} \times\\&\qquad
   \left\lVert \langle\lambda,(t,x)\rangle^{-2}\widehat{w}(t,x)
   \right\rVert_{L^2(\mathbb{R}^{1+n})} 
  \\ &\leq C \lVert X\rVert_{H^r(\O_T)} \left(\int_{\mathbb{R}^{1+n}} \dfrac{1}{(1+\lvert (\tau,\xi)\rvert^2)^{r-2}}\right)^{1/2}\lVert 
  w
  \rVert_{L^2(0,T;{H}^{-2}_\lambda(\mathbb{R}^n)}.
    \end{aligned}
\end{align}
Using the above calculation in \eqref{3.23}, we obtain 
\begin{align}
    \begin{aligned}
        \lVert 
\mathcal{K}_{3}^-\overrightarrow{v}\rVert_{L^2(0,T;\mathbf{H}^{-2}_\lambda(\mathbb{R}^n)}^2 &\leq C \sum_{i,j=1}^k\bigg(\lVert \overrightarrow{M}_{ij}\rVert^2_{H^{r-1}(\O_T)}\lVert 
  \nabla_x v_i
  \rVert^2_{L^2(0,T;{H}^{-2}_\lambda(\mathbb{R}^n)}+\lVert p_{ij}\rVert^2_{H^{r-1}(\O_T)}\lVert 
 v_i
  \rVert^2_{L^2(0,T;{H}^{-2}_\lambda(\mathbb{R}^n)}\\&\qquad \qquad+\lambda^2\lVert \overrightarrow{M}_{ij}\rVert^2_{H^{r}(\O_T)}\lVert 
v_i
  \rVert^2_{L^2(0,T;{H}^{-2}_\lambda(\mathbb{R}^n)}\bigg)\\&\leq C\sum_{i,j=1}^k\lVert 
v_i
  \rVert^2_{L^2(0,T;{H}^{-1}_\lambda(\mathbb{R}^n)}.
    \end{aligned}
\end{align}
Now, we use the above estimate in Equation \eqref{3.22} to obtain
\begin{align}
    \begin{aligned}
\sigma^{1/2}\lVert\overrightarrow{v}\rVert_{L^2(0,T;\mathbf{H}^{-1}_{\lambda}(\mathbb{R}^n))} \leq C\left(\lVert \mathcal{K}^*_{\phi_{\sigma}^-}\overrightarrow{v}\rVert_{L^2(0,T;\mathbf{H}^{-2}_\lambda(\mathbb{R}^n)}+\lVert 
\overrightarrow{v}
\rVert_{L^2(0,T;\mathbf{H}^{-1}_\lambda(\mathbb{R}^n)}\right).
    \end{aligned}
\end{align}
For $\sigma$ large enough, the above estimate boils down to
\begin{align}
    \begin{aligned}
\sigma^{1/2}\lVert\overrightarrow{v}\rVert_{L^2(0,T;\mathbf{H}^{-1}_{\lambda}(\mathbb{R}^n))} \leq C\lVert \mathcal{K}^*_{\phi_{\sigma}^-}\overrightarrow{v}\rVert_{L^2(0,T;\mathbf{H}^{-2}_\lambda(\mathbb{R}^n)}.
    \end{aligned}
\end{align}
Next, by replacing $\overrightarrow{v}$ by $e^{\frac{\sigma((x+x_0)\cdot\o)^2}{2}}\overrightarrow{v}$ in the above expression and choose $\sigma$ large enough, we obtain
 \begin{align}
    \|\overrightarrow{v}\|_{L^2(0,T;{\mathbf{H}}^{-1}_\lambda(\mathbb{R}^n))} \leq C  \| \mathcal{K}^*_{\phi} \overrightarrow{v}\|_{L^2(0,T;\mathbf{H}^{-2}_{\lambda}(\mathbb{R}^n))}.
\end{align}
This completes the proof.
\end{proof}
\begin{remark}
In particular, for $(A,Q)\in \mathcal{A}_m$ and $\mathcal{L}^*_{A,Q}$ be given by Equation \eqref{Operator L^{*}_{A,Q}}.  Then for any $\overrightarrow{v} \in {C}^1([0,T];\mathbf{C}^{\infty}_c(\Omega))$ with $\overrightarrow{v}(T,x)=\overrightarrow{0}$, there exists a constant $C>0$, such that the estimate 
    \begin{align}\label{shifted by -1 carleman estimate for l phi}
\|\overrightarrow{v}\|_{L^2(0,T;\mathbf{H}^{-1}_\lambda(\mathbb{R}^n))} \leq C  \| \mathcal{L}^*_{\phi} \overrightarrow{v}\|_{L^2(0,T;\mathbf{H}^{-2}_{\lambda}(\mathbb{R}^n))}
\end{align}
holds $\lambda$ large enough.
\end{remark}
The weighted $H^{-1}-H^{-2}$ Carleman estimate derived above, combined with the Hahn-Banach extension theorem and the Riesz representation theorem, yields the following solvability result for the weighted operator associated with $\mathcal{K}_{M,N,P}$.
\begin{proposition}\label{Proposition 1}
Let $\mathcal{K}_{M,N,P}$ and $\phi$ be defined by
\eqref{Operator K_{M,N,P}} and \eqref{carleman weight},
respectively. Then, for every
$\overrightarrow{W}\in
L^{2}\bigl(0,T;\mathbf{H}^{1}(\mathbb{R}^{n})\bigr)$,
the problem
\begin{equation}\label{proposition 5.1}
\left\{
\begin{aligned}
e^{-\phi}\mathcal{K}_{M,N,P}e^{\phi}\overrightarrow{u}
&=
\overrightarrow{W},
&& (t,x)\in\Omega_T,\\
\overrightarrow{u}(0,x)
&=
\overrightarrow{0},
&& x\in\Omega,
\end{aligned}
\right.
\end{equation}
admits a solution
$\overrightarrow{u}\in \mathbf{H}^{2,1}(\Omega_T)$
satisfying
\begin{align*}   \|\overrightarrow{u}\|_{L^{2}
(0,T;\mathbf{H}^{2}_{\lambda}(\Omega))}
\le
C
\|\overrightarrow{W}\|_{L^{2}
(0,T;\mathbf{H}^{1}_{\lambda}(\Omega))},
\end{align*}
where $C>0$ is a constant depending only on
$M$, $N$, $P$, and $\Omega_T$.
\end{proposition}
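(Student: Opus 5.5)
We will argue by duality from the $H^{-1}$--$H^{-2}$ interior Carleman estimate \eqref{shifted by -1 carleman estimate}, following the scheme used for scalar equations in \cite{sahoo2019partial, Bellassoued_2021}. Write $\mathcal{K}_\phi := e^{-\phi}\mathcal{K}_{M,N,P}e^{\phi}$ and $\mathcal{K}^*_\phi = e^{\phi}\mathcal{K}^*_{M,N,P}e^{-\phi}$ for its formal $L^2$-adjoint; this is the conjugated adjoint appearing in the lemma. Consider the subspace
\[
\mathcal{M} := \Bigl\{\mathcal{K}^*_\phi\overrightarrow{v} : \overrightarrow{v}\in C^1([0,T];\mathbf{C}^\infty_c(\Omega)),\ \overrightarrow{v}(T,\cdot)=\overrightarrow{0}\Bigr\}\subset L^2(0,T;\mathbf{H}^{-2}_\lambda(\mathbb{R}^n)).
\]
On $\mathcal{M}$ define the antilinear functional $\ell(\mathcal{K}^*_\phi\overrightarrow{v}) := \langle \overrightarrow{v},\overrightarrow{W}\rangle_{L^2(\Omega_T)}$. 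By the Carleman estimate, $\mathcal{K}^*_\phi\overrightarrow{v}$ determines $\overrightarrow{v}$, so $\ell$ is well defined. Moreover,
\[
|\ell(\mathcal{K}^*_\phi\overrightarrow{v})|\le \|\overrightarrow{v}\|_{L^2(0,T;\mathbf{H}^{-1}_\lambda)}\|\overrightarrow{W}\|_{L^2(0,T;\mathbf{H}^{1}_\lambda)}\le C\|\mathcal{K}^*_\phi\overrightarrow{v}\|_{L^2(0,T;\mathbf{H}^{-2}_\lambda)}\|\overrightarrow{W}\|_{L^2(0,T;\mathbf{H}^{1}_\lambda)}.
\]
Here we use the duality between $H^{-1}_\lambda$ and $H^{1}_\lambda$; for $\overrightarrow{W}$ given only on $\Omega$, we first extend it to $\mathbb{R}^n$ with comparable norm.

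Next we extend $\ell$ by Hahn--Banach to all of $L^2(0,T;\mathbf{H}^{-2}_\lambda(\mathbb{R}^n))$ with the same norm. By Riesz representation, or rather the duality $(H^{-2}_\lambda)'=H^{2}_\lambda$, we obtain $\overrightarrow{u}\in L^2(0,T;\mathbf{H}^2_\lambda(\mathbb{R}^n))$ with
\[
\|\overrightarrow{u}\|_{L^2(0,T;\mathbf{H}^{2}_\lambda)}\le C\|\overrightarrow{W}\|_{L^2(0,T;\mathbf{H}^{1}_\lambda)}
\]
and $\langle \mathcal{K}^*_\phi\overrightarrow{v},\overrightarrow{u}\rangle=\langle\overrightarrow{v},\overrightarrow{W}\rangle$ for all admissible $\overrightarrow{v}$.

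First we take $\overrightarrow{v}\in \mathbf{C}^\infty_c(\Omega_T)$. This gives $\mathcal{K}_\phi\overrightarrow{u}=\overrightarrow{W}$ in $\mathcal{D}'(\Omega_T)$, and the zeroth-order part of $\mathcal{K}_\phi$ is $O(\lambda^2)$, which is fine since we already control $\overrightarrow{u}$ in $H^2_\lambda$. Rewriting the equation as $\partial_t\overrightarrow{u}=\overrightarrow{W}+\Delta_x\overrightarrow{u}-(\text{lower order})$ shows $\partial_t\overrightarrow{u}\in L^2(\Omega_T)$, hence $\overrightarrow{u}\in\mathbf{H}^{2,1}(\Omega_T)$. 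The coefficient regularity $M\in L^\infty$, $N\in W^{1,\infty}$, $P\in L^\infty$ is what makes the right-hand side lie in $L^2$.

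It remains to recover the initial condition. We now use test functions with $\overrightarrow{v}(T,\cdot)=0$ but $\overrightarrow{v}(0,\cdot)$ arbitrary in $\mathbf{C}^\infty_c(\Omega)$. Integrating by parts in $t$, which is legitimate since $\overrightarrow{u}\in \mathbf{H}^{2,1}$ has a trace at $t=0$, and using the equation just obtained, all interior terms cancel. What is left is $\int_\Omega \overrightarrow{u}(0,x)\cdot\overline{\overrightarrow{v}(0,x)}\,dx=0$. Hence $\overrightarrow{u}(0,\cdot)=\overrightarrow{0}$. The sign bookkeeping follows from $\mathcal{K}^*_\phi$ containing $-\partial_t$.

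We expect the main obstacle to be this last step, together with the functional-analytic bookkeeping. In particular, the Hahn--Banach extension lives on $\mathbb{R}^n$ while the test functions are supported in $\Omega$. The identity therefore only yields the equation in $\Omega_T$, and we must check that the trace at $t=0$ makes sense; this is where the $\mathbf{H}^{2,1}$ upgrade is needed first. We would handle it by restricting $\overrightarrow{u}$ to $\Omega_T$, noting that restriction does not increase the $H^2_\lambda$ norm, which gives the stated $\Omega$-norm bound.
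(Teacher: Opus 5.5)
Your proposal is correct and follows essentially the same route as the paper. Both define the functional $\mathcal{K}^*_\phi\overrightarrow{v}\mapsto\langle\overrightarrow{v},\overrightarrow{W}\rangle$, bound it by the $H^{-1}$--$H^{-2}$ Carleman estimate, extend by Hahn--Banach, and represent it by some $\overrightarrow{u}\in L^2(0,T;\mathbf{H}^2_\lambda)$; testing with $\overrightarrow{v}$ vanishing at $t=0$ and $t=T$ gives the equation and $\partial_t\overrightarrow{u}\in\mathbf{L}^2(\Omega_T)$, and testing with only $\overrightarrow{v}(T,\cdot)=\overrightarrow{0}$ gives $\overrightarrow{u}(0,\cdot)=\overrightarrow{0}$. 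You also make explicit two points the paper passes over: the functional is well defined by injectivity from the Carleman estimate, and $\overrightarrow{W}$ must be extended from $\Omega$ to $\mathbb{R}^n$ with a $\lambda$-uniform bound to get the stated $\Omega$-norm estimate.
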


\begin{proof}
Let $Y$ denotes the subspace  of $L^2(0,T;\mathbf{H}^{-2}_{\lambda}(\mathbb{R}^n))$, given by
\begin{align*}
Y:=\{\mathcal{K}_{\phi}^{*}\overrightarrow{v}(t,x): \overrightarrow{v} \in {C}^1([0,T];{\textbf{C}}^{\infty}_c(\Omega)) \text{ with }\overrightarrow{v}(T,x) = \overrightarrow{0}\}
\end{align*}
 and $\mathcal{K}_{\phi} := e^{\phi}\mathcal{K}_{M,N,P}e^{\phi}$. Next, we consider a linear operator $\mathcal{T}$ on $Y$, given by
\begin{align}\label{linear operator T}
 T(\mathcal{K}_{\phi}^{*}\overrightarrow{v}) = \int_{\O_T} \overrightarrow{v}(t,x)\cdot \overline{\overrightarrow{W}(t,x)}\ dx dt, 
\end{align}
where $\overrightarrow{W}\in L^2(0,T;\mathbf{H}^1(\mathbb{R}^n))$.
Now we use Equation \eqref{shifted by -1 carleman estimate} in the above operator to arrive at
\begin{align*}
|\mathcal{T}(\mathcal{K}_{\phi}^{*}\overrightarrow{v})| &\leq \lVert \overrightarrow{W}\rVert_{L^2(0,T;\mathbf{H}^1_\lambda(\O))}\lVert \overrightarrow{v}\rVert_{L^2(0,T;\mathbf{H}^{-1}_\lambda(\mathbb{R}^n))}\\&\leq C\lVert \overrightarrow{W}\rVert_{L^2(0,T;\mathbf{H}^1_\lambda(\O))}\| \mathcal{K}^*_{\phi} \overrightarrow{v}\|_{L^2(0,T;\mathbf{H}^{-2}_{\lambda}(\mathbb{R}^n))}
\end{align*}
holds for $\overrightarrow{v} \in {C}^1([0,T];{\mathbf{C}}^{\infty}_c(\Omega))$ satisfying $\overrightarrow{v}(T,x)=\overrightarrow{0}$. Thus, $\mathcal{T}$ is a continuous linear operator. Next, using the Hahn-Banach theorem, we extend  $\mathcal{T}$ to the full space $L^2(0,T;\mathbf{H}^{-2}_{\lambda}(\mathbb{R}^n))$ ( denoted by $\mathcal{T}$ again) and satisfies
\begin{align*}
  \|\mathcal{T}\| \leq C\lVert \overrightarrow{W}\rVert_{L^2(0,T;\mathbf{H}^1_\lambda(\O))}.
\end{align*}
Moreover, the Riesz representation theorem ensures the unique existence  $\overrightarrow{u}\in L^2(0,T;\mathbf{H}^{2}_{\lambda}(\mathbb{R}^n))$, such that
\begin{align}\label{5.23}
    \mathcal{T}(\overrightarrow{f}) = \langle \overrightarrow{f}, \overrightarrow{u}\rangle_{L^2(0,T;\mathbf{H}^{-2}_{\lambda}(\mathbb{R}^n)),L^2(0,T;\mathbf{H}^{2}_{\lambda}(\mathbb{R}^n))}, \ \text{for} \ \overrightarrow{f}\in L^2(0,T;\mathbf{H}^{-2}_{\lambda}(\mathbb{R}^n))
\end{align}
with
\begin{align*}
\|\overrightarrow{u}\|_{L^2(0,T;\mathbf{H}^{2}_{\lambda}(\mathbb{R}^n))} \leq C\lVert \overrightarrow{W}\rVert_{L^2(0,T;\mathbf{H}^1_\lambda(\O))}.
\end{align*}
Choosing $\overrightarrow{f} = \mathcal{K}_{\phi}^{*}\overrightarrow{v}$ for $\overrightarrow{v} \in {C}^1([0, T];{\mathbf{C}}^{\infty}_c(\Omega))$ with $\overrightarrow{v}(0,x)=\overrightarrow{v}(T,x)=\overrightarrow{0}$, in the above equation, and comparing with the linear operator \eqref{linear operator T}, we get $\mathcal{K}_{\phi}\overrightarrow{u} = \overrightarrow{W}$ in $\O_T$. 
Since $\overrightarrow{u}\in L^2(0,T;\mathbf{H}^{2}_{\lambda}(\mathbb{R}^n))$ and $\overrightarrow{W}\in \mathbf{L}^2(\O_T)$, from the expression for $\mathcal{K}_{\phi}$, we conclude that $\partial_t \overrightarrow{u}  \in \textbf{L}^2(\O_T)$. Therefore, we have $ \overrightarrow{u}\in \mathbf{H}^{2,1}(\O_T)$. Our next claim is that  $\overrightarrow{u}(0,x)=0$. Substituting $\overrightarrow{f} = \mathcal{K}_{\phi}^{*}\overrightarrow{v}$ into Equation \eqref{5.23} for $\overrightarrow{v} \in {C}^1([0, T];{\mathbf{C}}^{\infty}_c(\Omega))$ with $\overrightarrow{v}(T,x)=\overrightarrow{0}$, we have
\begin{align*}
\int_{\O_T}\overrightarrow{v}(t,x)\cdot\overrightarrow{W}(t,x)\ dx dt=\int_{\Omega_T} \mathcal{K}_{\phi}^{*}\overrightarrow{v}(t,x)\cdot \overrightarrow{u}(t,x)\ dx dt .
\end{align*}
Using integration by parts formula together with  $\mathcal{K}_{\phi}\overrightarrow{u} = \overrightarrow{W}$, we obtain
\begin{align*}
\int_{\O}\overrightarrow{u}(0,x)\cdot \overrightarrow{v}(0,x)\ dx = 0
\end{align*}
 for all $\overrightarrow{v}\in{C}^1([0, T];\textbf{C}^{\infty}_c(\Omega))$ satisfying $\overrightarrow{v}(T,x) = \overrightarrow{0}$. Since $\overrightarrow{v}(0,\cdot)\in \textbf{C}^{\infty}_c(\Omega)$, 
by choosing $\overrightarrow{v}= (0,  \cdots , v_i, \cdots, 0)^T$ in the above equation,  we get $u_i(0,x)=0$ for $x\in\O$ and $1\leq i \leq k$. Thus, we conclude that $\overrightarrow{u}(0,x) = \overrightarrow{0}$ for $x\in \O$.
\end{proof}
\begin{corollary}\label{remark for Proposition 1}
As an immediate consequence, for every $(A,Q)\in \mathcal{A}_m$, with $\mathcal{L}_{A,Q}$ defined by Equation \eqref{Operator L_{A,Q}}, and for every
$\overrightarrow{W}\in
L^{2}\bigl(0,T;\mathbf{H}^{1}(\mathbb{R}^{n})\bigr)$,
the problem
\begin{equation}\label{remark for proposition 5.1}
\left\{
\begin{aligned}
e^{-\phi}\mathcal{L}_{A,Q}e^{\phi}\overrightarrow{u}
&=
\overrightarrow{W},
&& (t,x)\in\Omega_T,\\
\overrightarrow{u}(0,x)
&=
\overrightarrow{0},
&& x\in\Omega,
\end{aligned}
\right.
\end{equation}
admits a solution
$\overrightarrow{u}\in \mathbf{H}^{2,1}(\Omega_T)$
satisfying
\begin{align*}   \|\overrightarrow{u}\|_{L^{2}
(0,T;\mathbf{H}^{2}_{\lambda}(\Omega))}
\le
C
\|\overrightarrow{W}\|_{L^{2}
(0,T;\mathbf{H}^{1}_{\lambda}(\Omega))},
\end{align*}
where $C>0$ is a positive constant depending only on
$m$ and $\Omega_T$.
\end{corollary}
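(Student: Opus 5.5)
The plan is to specialize Proposition~\ref{Proposition 1} by choosing the coefficients $M$, $N$, $P$ as in \eqref{Value of M,N,P in term of A}. With this choice $\mathcal{K}_{M,N,P}=\mathcal{L}_{A,Q}$ (Theorem~\ref{existence in forward pde}), so the weighted problem \eqref{remark for proposition 5.1} is literally \eqref{proposition 5.1}.

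It then remains to check that these coefficients satisfy the hypotheses used in Proposition~\ref{Proposition 1} and in the $H^{-1}$--$H^{-2}$ Carleman estimate \eqref{shifted by -1 carleman estimate for l phi}. Those hypotheses are twofold: $M\in\mathbf{L}^\infty$, $N\in\mathbf{W}^{1,\infty}$, $P\in\mathbf{L}^\infty$, and the Sobolev regularity $H^{r}$ with $r>\frac{n+1}{2}+2$ used when bounding $\mathcal{K}_3^-$ in $\mathbf{H}^{-2}_\lambda$.

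For $(A,Q)\in\mathcal{A}_m$ we have $\overrightarrow{A}^i\in\mathbf{H}^{\ell}(\Omega_T)$ and $q_{ij}\in H^{\ell-1}(\Omega_T)$ with $\ell>\frac{n+1}{2}+3$. The Sobolev embedding $H^{s}(\Omega_T)\hookrightarrow C^{j}(\overline{\Omega_T})$ for $s>\frac{n+1}{2}+j$ then gives $\overrightarrow{A}^i\in W^{2,\infty}$ and $q_{ij}\in W^{1,\infty}$. Since $H^{\ell}$ is an algebra for $\ell>\frac{n+1}{2}$, we also get $\lvert\overrightarrow{A}^i\rvert^2\in H^{\ell}$, so $p_{ij}\in H^{\ell-1}\cap L^\infty$.

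All the resulting norms are bounded by constants depending only on $m$ and $\Omega_T$. For instance, $\|p_{ij}\|_{H^{\ell-1}}\le C(\|q_{ij}\|_{H^{\ell-1}}+\|\overrightarrow{A}^i\|^2_{H^{\ell}})\le C(m)$.

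The only point needing care is this dependence of the constants. Tracing the proofs of the Carleman estimate and of Proposition~\ref{Proposition 1}, the constant $C$ and the thresholds on $\lambda$ and $\sigma$ depend on the coefficients only through these norms. The relevant quantities are $C_1,C_2,C_3$, $\|M\|$, $\|\nabla_x\cdot N\|$, $\|P\|$, and the $H^{r}$, $H^{r-1}$ norms used for $\mathcal{K}_3^-$, with $r=\ell-1>\frac{n+1}{2}+2$. Each of these is uniformly bounded on $\mathcal{A}_m$.

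Hence the constant can be taken to depend only on $m$ and $\Omega_T$. Applying Proposition~\ref{Proposition 1} then yields a solution $\overrightarrow{u}\in\mathbf{H}^{2,1}(\Omega_T)$ with $\overrightarrow{u}(0,\cdot)=\overrightarrow{0}$ and the stated bound.
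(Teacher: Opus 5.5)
Your proposal is correct and follows the route the paper intends. The paper gives no separate proof: it treats the corollary as an immediate specialization of Proposition~\ref{Proposition 1} under the choice \eqref{Value of M,N,P in term of A}, using Sobolev embedding exactly as at the end of the proof of Theorem~\ref{existence in forward pde}. Your explicit check is more careful than the paper's one-line claim: you verify that the $H^{r}$ multiplier regularity with $r=\ell-1>\frac{n+1}{2}+2$, needed for the $\mathcal{K}_3^-$ bound, and hence all Carleman thresholds and constants, are controlled solely by $m$ and $\Omega_T$, which the paper leaves implicit.
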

An analogous solvability result holds for the adjoint operators
$\mathcal{K}_{M,N,P}^{*}$ and $\mathcal{L}^*_{A,Q}$. Indeed, by applying the weighted
$H^{-1}-H^{-2}$ Carleman estimate for the adjoint operator, together with the Hahn-Banach extension theorem and the Riesz representation theorem, one obtains the following solvability result.
\begin{proposition}
\label{Proposition 2}
Let $\mathcal{K}_{M,N,P}^{*}$ and $\phi$ be defined by
\eqref{Operator K^{*}_{A,Q}} and \eqref{carleman weight}, respectively.
Then, for any
$\overrightarrow{W}\in L^{2}(0,T;\mathbf{H}^{1}(\mathbb{R}^{n}))$,
the problem
\begin{align*}
\left\{
\begin{array}{rll}
e^{\phi}\mathcal{K}_{M,N,P}^{*}e^{-\phi}\overrightarrow{v}
&=
\overrightarrow{W},
& (t,x)\in \Omega_{T},\\
\overrightarrow{v}(T,x)
&=
\overrightarrow{0},
& x\in\Omega,
\end{array}
\right.
\end{align*}
admits a solution
$\overrightarrow{v}\in \mathbf{H}^{2,1}(\Omega_{T})$
satisfying
\begin{align}  \|\overrightarrow{v}\|_{L^{2}(0,T;
\mathbf{H}_{\lambda}^{2}(\Omega))}
\le
C
\|\overrightarrow{W}\|_{L^{2}(0,T;
\mathbf{H}_{\lambda}^{1}(\Omega))},
\end{align}
where $C>0$ is a constant depending only on
$M$, $N$, $P$, and $\Omega_{T}$. In particular, for $(A,Q)\in \mathcal{A}_m$  with $\mathcal{L}^*_{A,Q}$ defined by Equation \eqref{Operator L^{*}_{A,Q}}, and for every
$\overrightarrow{W}\in
L^{2}\bigl(0,T;\mathbf{H}^{1}(\mathbb{R}^{n})\bigr)$,
the problem \begin{align*}
\left\{
\begin{array}{rll}
e^{\phi}\mathcal{L}_{A,Q}^{*}e^{-\phi}\overrightarrow{v}
&=
\overrightarrow{W},
& (t,x)\in \Omega_{T},\\
\overrightarrow{v}(T,x)
&=
\overrightarrow{0},
& x\in\Omega,
\end{array}
\right.
\end{align*}
admits a solution
$\overrightarrow{v}\in \mathbf{H}^{2,1}(\Omega_{T})$
satisfying
\begin{align}  \|\overrightarrow{v}\|_{L^{2}(0,T;
\mathbf{H}_{\lambda}^{2}(\Omega))}
\le
C
\|\overrightarrow{W}\|_{L^{2}(0,T;
\mathbf{H}_{\lambda}^{1}(\Omega))},
\end{align}
where $C>0$, depends only on
$m$ and $\Omega_{T}.$
\end{proposition}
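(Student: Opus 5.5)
The plan is to mirror the proof of Proposition~\ref{Proposition 1} with the roles of $\mathcal{K}_{M,N,P}$ and $\mathcal{K}^*_{M,N,P}$ exchanged and with time reversed. The operator to invert is $\mathcal{K}^{*,\phi} := e^{\phi}\mathcal{K}^*_{M,N,P}e^{-\phi}$. Its formal $L^2$-adjoint is $e^{-\phi}\mathcal{K}_{M,N,P}e^{\phi}=\mathcal{K}_\phi$.

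\textbf{Step 1: the dual Carleman estimate.} I will first prove the estimate
\begin{align*}
\|\overrightarrow{u}\|_{L^2(0,T;\mathbf{H}^{-1}_\lambda(\mathbb{R}^n))}\le C\|\mathcal{K}_\phi\overrightarrow{u}\|_{L^2(0,T;\mathbf{H}^{-2}_\lambda(\mathbb{R}^n))}
\end{align*}
for all $\overrightarrow{u}\in C^1([0,T];\mathbf{C}^\infty_c(\Omega))$ with $\overrightarrow{u}(0,x)=\overrightarrow{0}$. This is the analogue of \eqref{shifted by -1 carleman estimate}.
\begin{itemize}
\item Use the convexified weight $\phi_\sigma$ from \eqref{modified Carleman weight}, in place of $\phi^-_\sigma$.
\item The $L^2$-level estimate $\sigma\|\overrightarrow{u}\|^2_{L^2(0,T;\mathbf{H}^1_\lambda)}\le C\|(\mathcal{K}_1+\mathcal{K}_2)\overrightarrow{u}\|^2$ for $\overrightarrow{u}$ supported in $\widetilde\Omega$ and vanishing at $t=0$ has already been recorded, from \eqref{estimate for K in term of sigma}.
\item Run the same shift argument as before. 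This uses the cut-offs $\varphi_1,\varphi_2$ of \eqref{cutoff functions}, the commutator bound \eqref{commutator bracket norm}, and the composition $(\lambda^2-\Delta_x)^{-1}(\mathcal{K}_1+\mathcal{K}_2)(\lambda^2-\Delta_x)$ with remainder in $\sigma^2\mathcal{S}^{-1}_\lambda$.
\item Absorb the lower-order part $\mathcal{K}_3$ through the multiplier bound in $H^{-2}_\lambda$ for coefficients in $H^r$, $r>\frac{n+1}{2}+2$.
\item Choose $\sigma$ and then $\lambda$ large.
\end{itemize}
The initial condition $\overrightarrow{u}(0)=\overrightarrow{0}$ is needed here in place of the final condition, since the time-derivative term $I_{1,1}$ and $I_{1,3}$ now produce nonnegative contributions at $t=T$.

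\textbf{Step 2: duality.} Let $Y:=\{\mathcal{K}_\phi\overrightarrow{u}:\overrightarrow{u}\in C^1([0,T];\mathbf{C}^\infty_c(\Omega)),\ \overrightarrow{u}(0)=\overrightarrow{0}\}\subset L^2(0,T;\mathbf{H}^{-2}_\lambda(\mathbb{R}^n))$. On $Y$, define $\mathcal{T}(\mathcal{K}_\phi\overrightarrow{u}):=\int_{\Omega_T}\overrightarrow{u}\cdot\overline{\overrightarrow{W}}$.
\begin{itemize}
\item By Step 1, $|\mathcal{T}(\mathcal{K}_\phi\overrightarrow{u})|\le C\|\overrightarrow{W}\|_{L^2(0,T;\mathbf{H}^1_\lambda)}\|\mathcal{K}_\phi\overrightarrow{u}\|_{L^2(0,T;\mathbf{H}^{-2}_\lambda)}$. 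In particular $\mathcal{T}$ is well defined, because $\mathcal{K}_\phi\overrightarrow{u}=0$ forces $\overrightarrow{u}=0$.
\item Extend $\mathcal{T}$ by Hahn--Banach to all of $L^2(0,T;\mathbf{H}^{-2}_\lambda(\mathbb{R}^n))$.
\item Apply Riesz representation to obtain $\overrightarrow{v}\in L^2(0,T;\mathbf{H}^2_\lambda(\mathbb{R}^n))$ with the stated bound.
\end{itemize}

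\textbf{Step 3: the equation and regularity.} Testing with $\overrightarrow{u}$ that also vanish at $t=T$ gives $\mathcal{K}^{*,\phi}\overrightarrow{v}=\overrightarrow{W}$ in the distributional sense on $\Omega_T$. Solving for $\partial_t\overrightarrow{v}$ in terms of $\Delta_x\overrightarrow{v}$, first-order terms and $\overrightarrow{W}$ shows $\partial_t\overrightarrow{v}\in\mathbf{L}^2(\Omega_T)$, hence $\overrightarrow{v}\in\mathbf{H}^{2,1}(\Omega_T)$.

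\textbf{Step 4: the final condition.} Now test with general $\overrightarrow{u}$ satisfying only $\overrightarrow{u}(0)=\overrightarrow{0}$. Integration by parts in $t$ leaves the single term $\int_\Omega\overrightarrow{v}(T,x)\cdot\overline{\overrightarrow{u}(T,x)}\,dx=0$. Since $\overrightarrow{u}(T,\cdot)$ ranges over $\mathbf{C}^\infty_c(\Omega)$ componentwise, this yields $\overrightarrow{v}(T,\cdot)=\overrightarrow{0}$.

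\textbf{Specialisation to $\mathcal{L}^*_{A,Q}$.} Substitute \eqref{Value of M,N,P in term of A}. Since $\ell>\frac{n+1}{2}+3$, the regularity of $(A,Q)\in\mathcal{A}_m$ makes the coefficients lie in the required $H^r$ and $W^{1,\infty}$ classes, with bounds depending only on $m$. The constants therefore depend only on $m$ and $\Omega_T$.

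\textbf{Main obstacle.} I expect the main difficulty to be Step 1. In the pseudodifferential reduction, the remainder $\mathtt{r}$ coming from the sign-flipped weight must again be $O(\sigma^2)\mathcal{S}^{-1}_\lambda$, so that it can be absorbed for $\lambda\gg\sigma^2$. I also need to check that the matrix coupling in $\mathcal{K}_3$ does not interfere, since it enters only at lower order.
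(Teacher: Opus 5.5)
Your proposal is correct and follows the route the paper intends: the paper only says Proposition~\ref{Proposition 2} follows, like Proposition~\ref{Proposition 1}, from an $H^{-1}$--$H^{-2}$ Carleman estimate together with Hahn--Banach and Riesz. You correctly make explicit that the estimate needed is the one for $e^{-\phi}\mathcal{K}_{M,N,P}e^{\phi}$ on functions with $\overrightarrow{u}(0,\cdot)=\overrightarrow{0}$ (the Lemma rerun with $\phi_\sigma$ in place of $\phi_\sigma^-$), and you then recover the equation, the $\mathbf{H}^{2,1}$ regularity and $\overrightarrow{v}(T,\cdot)=\overrightarrow{0}$ by the same testing argument. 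One minor correction to your ``main obstacle'': the commutator remainder in $(\lambda^2-\Delta_x)^{-1}(\mathcal{K}_1+\mathcal{K}_2)(\lambda^2-\Delta_x)$ is not in $\sigma^2\mathcal{S}^{-1}_\lambda$, since it has an order-zero part of size $O(\sigma)$ coming from $\lambda\sigma\,\omega\cdot\xi/(\lambda^2+|\xi|^2)$ and $\sigma(\omega\cdot\xi)^2/(\lambda^2+|\xi|^2)$; this is harmless, because it acts on $(\lambda^2-\Delta_x)^{-1}\overrightarrow{u}$, contributes only $O(\sigma\lambda^{-1}+\sigma^2\lambda^{-2})\|\overrightarrow{u}\|_{L^2(0,T;\mathbf{H}^{-1}_\lambda(\mathbb{R}^n))}$, and is absorbed for $\lambda\gg\sigma$.
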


\section{Geometric optics solutions}\label{Construction of solutions}
This section is devoted to the construction of the geometric solutions for the operators $\mathcal{L}_{A,Q}$ and  $\mathcal{L}^*_{A,Q}$, where $\mathcal{L}^*_{A,Q}$ denote the
$L^2$-adjoint of $\mathcal{L}_{A,Q}$ and is given by
\begin{align}
 \label{Operator L^{*}_{A,Q}}
   \displaystyle \mathcal{L}^{*}_{A,Q}\overrightarrow{v} := \begin{bmatrix}
 -\partial_t v_1-  \sum_{j=1}^{n}(\partial_j- A^1_j)^2v_1+ \sum_{j=1}^k {q}_{j1} v_j
\\ 
-\partial_t v_2-  \sum_{j=1}^{n}(\partial_j- A^2_j)^2v_2+ \sum_{j=1}^k {q}_{j2} v_j\\
\vdots\\
-\partial_t v_k-  \sum_{j=1}^{n}(\partial_j- A^k_j)^2 v_k+ \sum_{j=1}^k {q}_{jk} v_j
\end{bmatrix}, \quad &(t,x)\in \Omega_T 
\end{align}  where $(A, Q)\in \mathcal{A}_m$ and $\overrightarrow{v}:= (v_1(t,x), v_2(t,x), \cdots, v_k(t,x))^T$. 
In compact form, $\mathcal{L}^*_{A,Q}$ can be written as 
\begin{align*}
    \mathcal{L}^*_{A,Q}\overrightarrow{v}=-\partial_t \overrightarrow{v}-\Delta_x \overrightarrow{v}+2A\cdot\nabla_x\overrightarrow{v}+(\nabla_x\cdot A)\overrightarrow{v}- A^2\overrightarrow{v}+Q\overrightarrow{v}.
\end{align*}
This section is divided into two subsections. The first subsection,
Subsection~\ref{subsec;exponentially growing solution}, is devoted to the construction of exponentially growing solutions for the operator $\mathcal{L}_{A,Q}$. The second subsection,
Subsection~\ref{subsec;exponentially decaying solution}, is concerned with the construction of exponentially decaying solutions for the adjoint operator $\mathcal{L}_{A,Q}^{*}$. 
The construction of exponentially growing solutions is a key ingredient in our analysis. Together with the corresponding exponentially decaying solutions for the adjoint operator, these solutions will be substituted into the integral identity established later to derive stability estimates for the convection coefficient and the matrix-valued potential.
\subsection{Construction of exponentially growing solution} \label{subsec;exponentially growing solution}
In this subsection, we construct exponentially growing solutions for the operator $\mathcal{L}_{A,Q}$. Let $\omega\in\mathbb{S}^{n-1}$ be fixed and consider the Carleman weight
\begin{align*}
    \phi(t,x)=\lambda^{2}t+\lambda x\cdot\omega,
\end{align*}
where $\lambda>0$ is a sufficiently large parameter. Our objective is to construct a solution
$\overrightarrow{u}\in\mathbf{H}^{2,1}(\Omega_T)$
of the equation
\begin{align*}
\mathcal{L}_{A,Q}\overrightarrow{u}=\overrightarrow{0}
\quad \text{in } \Omega_T,
\end{align*}
having the form
\begin{align*}
    \overrightarrow{u}(t,x)
=
e^{\phi(t,x)}
\left(
\overrightarrow{T_u}(t,x)
+
\overrightarrow{R_u}(t,x;\lambda)
\right),
\end{align*}
where $\overrightarrow{T_u}$ denotes the principal amplitude, while
$\overrightarrow{R_u}$ is a correction term that vanishes asymptotically as
$\lambda\to\infty$.
Substituting the above ansatz into the equation
$\mathcal{L}_{A,Q}\overrightarrow{u}=\overrightarrow{0}$
and collecting the terms of the highest order in the parameter $\lambda$, we obtain
\begin{align}\label{4.2}
\mathcal{L}_{A,Q}e^{\phi}\overrightarrow{R_u}
=
-e^{\phi}
\bigl(\mathcal{L}_{A,Q}-2\lambda L_u\bigr)
\overrightarrow{T_u},
\end{align}
where $L_u$ denotes the transport operator defined by
\begin{align}\label{transport equation}
L_u
:=
\omega\cdot\nabla_x
+\omega\cdot A.
\end{align}
To eliminate the leading-order term on the right-hand side of
Equation~\eqref{4.2}, we choose the principal amplitude
$\overrightarrow{T_u}$ to satisfy the transport equation
\(
   L_u\overrightarrow{T_u}
=
\overrightarrow{0}. 
\)
The construction of such an amplitude requires a suitable cut-off
near the initial and terminal times. For this purpose, let
$k\in\mathbb N$ and $0<\varrho\ll1$. We choose a family of smooth
functions 
   $ \{\Xi_\varrho\}_{\varrho>0}
\subset C_c^\infty((0,T))$
such that
\begin{align}\label{Xi estimate}
\Xi_\varrho(t)
\equiv1,
\quad
t\in[2\varrho,T-2\varrho], \quad \text{and} \quad
\|\Xi_\varrho\|_{W^{k,\infty}(\mathbb R)}
\le
C\varrho^{-k},
\end{align}
where the constant $C>0$ is independent of $\varrho$.
%%%%%%%%%%%%%%%%%%%%%%%%%%%%%%%%%%
Next, we construct the principal amplitude $\overrightarrow{T_u}$ as a solution of the transport equation \eqref{transport equation}. For our subsequent analysis, we consider two distinct solutions of the transport equation. These solutions play different roles in the proof of the main theorem: the first is used to establish the stability estimate for the convection coefficient $A$, while the second is employed to derive the stability estimate for the matrix-valued potential $Q$. 
\begin{enumerate}
    \item Let $\varrho\in (0,T/4)$ and $D_i\in W^{3,\infty}(\O_T,\mathbb{R}^n)$ satisfying $\lVert D_i\rVert_{W^{3,\infty}(\mathbb{R}^n)}\leq C$, for $1\leq i \leq k$. Then for $\tau\in \mathbb{R}$ and $\xi \in \mathbb{R}^n$ such that $\o\cdot \xi=0$, we define
\begin{align}\label{tu in stability of A}
    \begin{aligned}
        \overrightarrow{T_u}=\begin{bmatrix}
            \alpha_1\Xi_\varrho(t)\dfrac{\xi}{\lvert \xi\rvert}\cdot \nabla_x\left(e^{-\mathrm{i}(t\tau+x\cdot\xi)}e^{\int_{\mathbb{R}}\o\cdot D_1(t,x+s\o)ds}\right)e^{\int_{0}^{\infty} \o\cdot \overrightarrow{A}^1(t,x+s\o)ds}\\\vdots   \\ \alpha_k\Xi_\varrho(t)\dfrac{\xi}{\lvert \xi\rvert}\cdot \nabla_x\left(e^{-\mathrm{i}(t\tau+x\cdot\xi)}e^{\int_{\mathbb{R}}\o\cdot D_k(t,x+s\o)ds}\right)e^{\int_{0}^{\infty} \o\cdot \overrightarrow{A}^k(t,x+s\o)ds}
        \end{bmatrix}
    \end{aligned}
\end{align}
for any $\alpha_i\in \mathbb{R}$ for $1\leq i\leq k$. 
\item  Let $\varrho\in (0,T/4)$ and  $(\tau,\xi) \in \mathbb{R}^{1+n}$ with $\o\cdot\xi=0$.  Define
\begin{align}\label{tu in stability of Q}
    \begin{aligned}
        \overrightarrow{T_u}=\begin{bmatrix}
            \alpha_1\Xi_\varrho(t)e^{-\mathrm{i}(t\tau+x\cdot\xi)}e^{\int_{0}^{\infty} \o\cdot \overrightarrow{A}^1(t,x+s\o)ds}\\\vdots   \\ \alpha_k\Xi_\varrho(t)e^{-\mathrm{i}(t\tau+x\cdot\xi)}e^{\int_{0}^{\infty} \o\cdot \overrightarrow{A}^k(t,x+s\o)ds}
        \end{bmatrix}
    \end{aligned}
\end{align}
for any $\alpha_i\in \mathbb{R}$ for $1\leq i\leq k$.
\end{enumerate}
Using $L_u\overrightarrow{T_u}=\overrightarrow{0}$ in Equation \eqref{4.2}, we obtain
\begin{align}
   e^{-\phi} \mathcal{L}_{A,Q}e^{\phi}\overrightarrow{R_u}  = -\mathcal{L}_{A,Q}\overrightarrow{T_u}.
\end{align}
Now from Corollary \ref{remark for Proposition 1}, it follows that
 the problem
 \begin{align*}
\left\{
	\begin{array}{ l l l }
  e^{-\phi} \mathcal{L}_{A,Q}e^{\phi}\overrightarrow{R_u}  = -\mathcal{L}_{A,Q}\overrightarrow{T_u}, \ (t,x)\in\Omega_T,\\
     \overrightarrow{R_u}(0,x)=\overrightarrow{0}, \ x\in \Omega, 
\end{array}
\right.
\end{align*}
admits a solution $\overrightarrow{R_u} \in \mathbf{H}^{2,1}(\O_T)$ satisfying 
\begin{align*} \|\overrightarrow{R_u}\|_{L^2(0,T;\mathbf{H}^2_{\lambda}(\Omega))}&\leq  \|\mathcal{L}_{A,Q}\overrightarrow{T_u}\|_{L^2(0,T;\mathbf{H}^1_{\lambda}(\Omega))}.
\end{align*}
By a direct computation, one can see that if $\overrightarrow{T_u}$ is given by \eqref{tu in stability of A}, we have 
\begin{align*} \|\overrightarrow{R_u}\|_{L^2(0,T;\mathbf{H}^2_{\lambda}(\Omega))}\leq  \|\mathcal{L}_{A,Q}\overrightarrow{T_u}\|_{L^2(0,T;\mathbf{H}^1_{\lambda}(\Omega))}\leq C\lambda \varrho^{-1}\langle \tau, \xi\rangle_*^4
\end{align*}
where $\langle \tau, \xi\rangle_*$ is a notational convention for $\sqrt{1+\tau^2+|\xi|^2}$. 
We conclude this subsection with the following lemma.  
\begin{lemma}\label{lemma exponential growing solution}{(Exponentially growing solution)}
 Let $\mathcal{L}_{A,Q}$ and $\phi$ be as defined in Equations \eqref{Operator L_{A,Q}} and \eqref{carleman weight}, respectively. Assume that $\varrho\in (0,T/4)$ and that $D_i\in W^{3,\infty}(\Omega_T,\mathbb{R}^n)$ satisfy
 \begin{align*}
     \|D_i\|_{W^{3,\infty}(\mathbb{R}^n)}\leq C, \qquad 1\leq i\leq k.
 \end{align*}
Then, for any $\tau\in \mathbb{R}$ and $\xi\in \mathbb{R}^n$ such that $\omega\cdot \xi=0$, there exists a sufficiently large parameter $\lambda>0$ such that the initial value problem
\begin{align}
\left\{
\begin{array}{rll}
\mathcal{L}_{A,Q}\overrightarrow{u} & = \overrightarrow{0}, & (t,x)\in \Omega_T,\\[0.2em]
\overrightarrow{u}(0,x) & = \overrightarrow{0}, & x\in \Omega,
\end{array}
\right.
\end{align}
admits a solution $\overrightarrow{u}\in \mathbf{H}^{2,1}(\Omega_T)$ of the form
\begin{align}\label{u solution form}
\overrightarrow{u}(t,x) = e^{\phi(t,x)}\big(\overrightarrow{T_u}(t,x) + \overrightarrow{R_u}(t,x;\lambda)\big).
\end{align}
We specify below two possible choices for $\overrightarrow{T_u}$ along with corresponding bounds for the correction terms.

\begin{enumerate}
    \item First, define
    \begin{align}\label{solution of transport equation}
        \overrightarrow{T_u}(t,x)=
        \begin{bmatrix}
        \displaystyle
        \alpha_1\,\Xi_\varrho(t)\,\frac{\xi}{\lvert \xi\rvert}\cdot \nabla_x\!\left(e^{-\mathrm{i}(t\tau+x\cdot\xi)}
        e^{\int_{\mathbb{R}}\omega\cdot D_1(t,x+s\omega)\,ds}\right)
        e^{\int_{0}^{\infty} \omega\cdot \overrightarrow{A}^1(t,x+s\omega)\,ds}\\[0.9em]
        \vdots\\[0.3em]
        \displaystyle
        \alpha_k\,\Xi_\varrho(t)\,\frac{\xi}{\lvert \xi\rvert}\cdot \nabla_x\!\left(e^{-\mathrm{i}(t\tau+x\cdot\xi)}
        e^{\int_{\mathbb{R}}\omega\cdot D_k(t,x+s\omega)\,ds}\right)
        e^{\int_{0}^{\infty} \omega\cdot \overrightarrow{A}^k(t,x+s\omega)\,ds}
        \end{bmatrix},
    \end{align}
    and the corresponding correction term $\overrightarrow{R_u}(t,x;\lambda)$ satisfy
    \begin{align*}
        \|\overrightarrow{R_u}\|_{L^2\big(0,T;\mathbf{H}^{k}(\Omega)\big)}
        \leq C\,\lambda^{-1+k}\,\varrho^{-1}\,\langle \tau,\xi\rangle_*^{4},\ \text{ for }k\in\{0,1,2\}.
    \end{align*}

    \item Alternatively, define
    \begin{align}
        \overrightarrow{T_u}(t,x)=
        \begin{bmatrix}
        \alpha_1\,\Xi_\varrho(t)\,e^{-\mathrm{i}(t\tau+x\cdot\xi)}
        e^{\int_{0}^{\infty} \omega\cdot \overrightarrow{A}^1(t,x+s\omega)\,ds}\\
        \vdots\\
        \alpha_k\,\Xi_\varrho(t)\,e^{-\mathrm{i}(t\tau+x\cdot\xi)}
        e^{\int_{0}^{\infty} \omega\cdot \overrightarrow{A}^k(t,x+s\omega)\,ds}
        \end{bmatrix},
    \end{align}
    and the corresponding correction term $\overrightarrow{R_v}(t,x;\lambda)$ satisfy
    \begin{align}
\|\overrightarrow{R_v}\|_{L^2\big(0,T;\mathbf{H}^{k}(\Omega)\big)}
        \leq C\,\lambda^{-1+k}\,\varrho^{-1}\,\langle \tau,\xi\rangle_*^{3},\ \text{ for }k\in\{0,1,2\}.
    \end{align}
\end{enumerate}
These constructions hold for each choice of coefficients $\alpha_i\in \mathbb{R}$, $1\leq i\leq k$.
\end{lemma}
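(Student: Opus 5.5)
We follow the ansatz already set up in this subsection. We substitute $\overrightarrow{u}=e^{\phi}(\overrightarrow{T_u}+\overrightarrow{R_u})$ into $\mathcal{L}_{A,Q}\overrightarrow{u}=\overrightarrow{0}$. Since $\partial_t\phi=\lambda^2$ and $\nabla_x\phi=\lambda\omega$, the $\lambda^2$ terms cancel, because $\lambda^2-\lambda^2|\omega|^2=0$. The conjugated operator then reads
\[
e^{-\phi}\mathcal{L}_{A,Q}e^{\phi}=\mathcal{L}_{A,Q}-2\lambda L_u .
\]
This is applied componentwise, which is possible because $A$ is diagonal and $Q$ is of order zero, so $Q$ does not enter the $\lambda$-term. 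The problem therefore reduces to two tasks: check that each proposed $\overrightarrow{T_u}$ solves $L_u\overrightarrow{T_u}=\overrightarrow{0}$, and solve \eqref{4.2} for $\overrightarrow{R_u}$ with zero initial data.

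\emph{Transport equation.} Each component has the form $\Xi_\varrho(t)\,g_i(t,x)\,e^{\Phi_i}$, where
\[
\Phi_i(t,x)=\int_0^\infty\omega\cdot\overrightarrow{A}^i(t,x+s\omega)\,ds .
\]
Here $A$ is extended by zero outside $\Omega_T$; it is compactly supported after a Sobolev extension, or the boundary condition on $A$ can be used. We have
\[
\omega\cdot\nabla_x\Phi_i=\int_0^\infty\frac{d}{ds}\bigl(\omega\cdot\overrightarrow{A}^i(t,x+s\omega)\bigr)\,ds=-\omega\cdot\overrightarrow{A}^i ,
\]
so it suffices that $\omega\cdot\nabla_x g_i=0$. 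In case (2), $g_i=e^{-\mathrm{i}(t\tau+x\cdot\xi)}$ and $\omega\cdot\xi=0$. In case (1), the function $e^{-\mathrm{i}(t\tau+x\cdot\xi)}e^{\int_{\mathbb{R}}\omega\cdot D_i(t,x+s\omega)\,ds}$ is constant along the direction $\omega$. Differentiating it in the direction $\xi/|\xi|$ commutes with $\omega\cdot\nabla_x$, so $\omega\cdot\nabla_x g_i=0$ again. Hence $L_u\overrightarrow{T_u}=\overrightarrow{0}$.

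\emph{Remainder.} With the transport equation solved, \eqref{4.2} becomes
\[
e^{-\phi}\mathcal{L}_{A,Q}e^{\phi}\overrightarrow{R_u}=-\mathcal{L}_{A,Q}\overrightarrow{T_u},\qquad \overrightarrow{R_u}(0,\cdot)=\overrightarrow{0}.
\]
The right-hand side $\overrightarrow{W}:=-\mathcal{L}_{A,Q}\overrightarrow{T_u}$ must lie in $L^2(0,T;\mathbf{H}^1)$, and this needs $\overrightarrow{T_u}$ with three spatial derivatives and one time derivative of $\partial_x$ type in $L^2$. This holds because $\ell>\tfrac{n+1}{2}+3$ gives $A\in W^{3,\infty}$ by Sobolev embedding, and $D_i\in W^{3,\infty}$. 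Corollary \ref{remark for Proposition 1} then yields $\overrightarrow{R_u}\in\mathbf{H}^{2,1}(\Omega_T)$ with
\[
\|\overrightarrow{R_u}\|_{L^2(0,T;\mathbf{H}^2_\lambda)}\le C\|\overrightarrow{W}\|_{L^2(0,T;\mathbf{H}^1_\lambda)} .
\]
Since $e^{\phi}(\overrightarrow{T_u}+\overrightarrow{R_u})$ vanishes at $t=0$ (because $\Xi_\varrho$ vanishes near $0$), $\overrightarrow{u}$ is the required solution.

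\emph{Bounds (main obstacle).} The main step is bounding $\|\overrightarrow{W}\|_{L^2(0,T;\mathbf{H}^1_\lambda)}$. We use
\[
\|\overrightarrow{W}\|_{\mathbf{H}^1_\lambda}\le \lambda\|\overrightarrow{W}\|_{L^2}+\|\overrightarrow{W}\|_{\mathbf{H}^1}.
\]
Note that $\mathcal{L}_{A,Q}\overrightarrow{T_u}$ contains $\partial_t\overrightarrow{T_u}$. Counting powers:
\begin{itemize}
\item $\Xi_\varrho'$ contributes $\varrho^{-1}$;
\item $\partial_t e^{-\mathrm{i}t\tau}$ contributes $|\tau|$;
\item the second-order spatial terms contribute $|\xi|^2$;
\item the extra $\xi/|\xi|\cdot\nabla_x$ in case (1) adds one more power;
\item the $\mathbf{H}^1$ norm adds one more.
\end{itemize}
The constants depend on $\|A\|_{W^{3,\infty}}\lesssim m$ and on $\|D_i\|_{W^{3,\infty}}$. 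This gives $\|\overrightarrow{W}\|_{L^2(0,T;\mathbf{H}^1_\lambda)}\le C\lambda\varrho^{-1}\langle\tau,\xi\rangle_*^{4}$ in case (1), and the exponent $3$ in case (2). Finally, the definition of the semiclassical norm gives
\[
\lambda^{2-j}\|\overrightarrow{R_u}\|_{L^2(0,T;\mathbf{H}^j)}\le\|\overrightarrow{R_u}\|_{L^2(0,T;\mathbf{H}^2_\lambda)} ,
\]
hence $\|\overrightarrow{R_u}\|_{L^2(0,T;\mathbf{H}^j)}\le C\lambda^{-1+j}\varrho^{-1}\langle\tau,\xi\rangle_*^{4\text{ or }3}$ for $j=0,1,2$. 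This is the claimed estimate, with the paper's $k$ there read as this Sobolev index $j$.

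The delicate point is keeping the bookkeeping honest. The $\lambda$ in the $\mathbf{H}^1_\lambda$ norm of the source should not be double counted, and the regularity of $\Phi_i$ near $\partial\Omega$ depends on the zero extension of $A$. To control the latter, I would first extend $A$ to a compactly supported $H^\ell$ function on $\mathbb{R}^{1+n}$ and verify $\Phi_i\in W^{3,\infty}$ uniformly in $m$.
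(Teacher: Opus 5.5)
Your proposal is correct and follows the paper's construction essentially step by step. You conjugate to get $e^{-\phi}\mathcal{L}_{A,Q}e^{\phi}=\mathcal{L}_{A,Q}-2\lambda L_u$, verify $L_u\overrightarrow{T_u}=\overrightarrow{0}$ via the factor $e^{\int_0^\infty\omega\cdot\overrightarrow{A}^i}$ and $\omega\cdot\xi=0$, appeal to Corollary~\ref{remark for Proposition 1} for $\overrightarrow{R_u}$ with $\overrightarrow{R_u}(0,\cdot)=\overrightarrow{0}$, and read off the $\lambda^{-1+j}$ bounds from the $\mathbf{H}^2_\lambda$ norm.

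One bookkeeping caveat, which the paper's ``direct computation'' shares: in case (1), $\overrightarrow{T_u}$ already contains $\nabla_x\int_{\mathbb{R}}\omega\cdot D_i(t,x+s\omega)\,ds$. So $\mathcal{L}_{A,Q}\overrightarrow{T_u}\in L^2(0,T;\mathbf{H}^1)$ needs four spatial derivatives of $D_i$, not the three you cite. The application does supply this, since there $D_i$ comes from coefficients in $H^\ell$ with $\ell>\frac{n+1}{2}+3$.
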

\subsection{Construction of exponentially decaying solution}\label{subsec;exponentially decaying solution}
In this subsection, we construct exponentially decaying solutions for the operator $\mathcal{L}^*_{A,Q}$.
Let $\phi$ and $\mathcal{L}^*_{A,Q}$  be as defined by Equations \eqref{carleman weight} and \eqref{Operator L^{*}_{A,Q}}  respectively. Our aim is to find a solution $\overrightarrow{v}$ to $\mathcal{L}^*_{A,Q}\overrightarrow{v}=\overrightarrow{0}$, such that $\overrightarrow{v}\in \mathbf{H}^{2,1}(\O_T)$ and is of the form 
\begin{align*}
     \overrightarrow{v} = e^{-\phi(t,x)}(\overrightarrow{T_v}(t,x) + \overrightarrow{R_v}(t,x;\lambda)),
\end{align*}
where $\overrightarrow{R_v}$ stands for the correction term. The substitution will give
\begin{align}\label{4.4}
\mathcal{L}^*_{A,Q}e^{-\phi}\overrightarrow{R_v}  = -e^{-\phi}(\mathcal{L}^*_{A,Q}+2\lambda L)\overrightarrow{T_v},
\end{align}
where $L$ denotes the transport operator and is given by
\(
    L_v = \o\cdot \nabla_x-\o\cdot A.
\)
Now, we look for $\overrightarrow{T_v}$ satisfying $L_v\overrightarrow{T_v} = \overrightarrow{0}$ and is of the form 
\begin{align}
\label{T tilde vector}
\overrightarrow{T_v} = \begin{pmatrix}
            \widetilde{\alpha}_1\Xi_\varrho(t)e^{-\int_{0}^{\infty} \o\cdot \overrightarrow{A}^1(t,x+s\o)ds}, \cdots  , \widetilde{\alpha}_k\Xi_\varrho(t)e^{-\int_{0}^{\infty} \o\cdot \overrightarrow{A}^k(t,x+s\o)ds}
        \end{pmatrix}
\end{align}
 for any $\widetilde{\alpha}_i \in \mathbb{R}$ for $\ 1\leq i \leq k$.
Using $L_v \overrightarrow{T_v}=\overrightarrow{0}$ in  Equation~\eqref{4.4}, we have 
\begin{align}
   e^{\phi} \mathcal{L}^*_{A,Q}e^{-\phi}\overrightarrow{R_v}  = -\mathcal{L}^*_{A,Q}\overrightarrow{T_v}.
\end{align}
Now from Proposition \ref{Proposition 2}, it follows that
 the problem
 \begin{align*}
\left\{
	\begin{array}{ r l l }
  e^{\phi} \mathcal{L}^*_{A,Q}e^{-\phi}\overrightarrow{R_v} & = -\mathcal{L}^*_{A,Q}\overrightarrow{T_v},& \ (t,x)\in\Omega_T,\\
     \overrightarrow{R_v}(T,x)&=\overrightarrow{0}, &\ x\in \Omega, 
\end{array}
\right.
\end{align*}
admits a solution $\overrightarrow{R_v} \in \mathbf{H}^{2,1}(\O_T)$ satisfying 
\begin{align*} \|\overrightarrow{R_v}\|_{L^2(0,T;\mathbf{H}^2_{\lambda}(\Omega))}\leq  \|\mathcal{L}^*_{A,Q}\overrightarrow{T_v}\|_{L^2(0,T;\mathbf{H}^1_{\lambda}(\Omega))}\leq C\lambda \varrho^{-1}.
\end{align*}
We conclude this subsection with the following lemma.  
\begin{lemma}\label{lemma exponential decaying solution}{(Exponentially decaying solution)}
  Let $\phi$ and
  $\mathcal{L}^*_{A,Q}$ are as defined by Equations \eqref{carleman weight} and \eqref{Operator L^{*}_{A,Q}} respectively. Then for $\varrho\in (0,T/4)$ there exists a
sufficient large parameter $\lambda>0$ such that the backward problem
 \begin{align}
     \left\{
	\begin{array}{ r l l }
\mathcal{L}^*_{A,Q}\overrightarrow{v}& = \overrightarrow{0},& \ (t,x)\in\Omega_T,\\
    \overrightarrow{v}(T,x)& = \overrightarrow{0},& \ x\in \Omega, 
\end{array}
\right.
 \end{align}
admits a solution $\overrightarrow{v}\in {{\mathbf{H}}^{2,1}(\O_T)}$ of the form 
\begin{align}
\label{solution form for adjoint operator}
   \overrightarrow{v} = e^{-\phi(t,x)}(\overrightarrow{T_v}(t,x) + \overrightarrow{R_v}(t,x;\lambda)),
\end{align}
 where \begin{align}
 \begin{aligned}
\overrightarrow{T_v} = \begin{bmatrix}
        \widetilde{\alpha}_1\Xi_\varrho(t)e^{-\int_{0}^{\infty} \o\cdot \overrightarrow{A}^1(t,x+s\o)ds}\\ \vdots \\  \widetilde{\alpha}_k\Xi_\varrho(t)e^{-\int_{0}^{\infty} \o\cdot \overrightarrow{A}^k(t,x+s\o)ds}
        \end{bmatrix}, 
        \end{aligned}
 \end{align}
 and $\overrightarrow{R_v}(t,x;\lambda)$ satisfies
\begin{align*} \|\overrightarrow{R_v}\|_{L^2(0,T;\mathbf{H}^k(\Omega))}\leq C\lambda^{-1+k}\varrho^{-1},\text{ for } k\in \{0,1,2\},
\end{align*}
 for any $\widetilde{\alpha}_i\in \mathbb{R}$, $1\leq i\leq k$. 
\end{lemma}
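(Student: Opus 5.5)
The plan is to mirror the construction of Lemma \ref{lemma exponential growing solution}, now for the adjoint operator with the decaying weight $e^{-\phi}$ and a backward-in-time condition. First I would substitute the ansatz \eqref{solution form for adjoint operator} into $\mathcal{L}^*_{A,Q}\overrightarrow{v}=\overrightarrow{0}$. Conjugation gives
\[
e^{\phi}\mathcal{L}^*_{A,Q}e^{-\phi}\overrightarrow{w}
=\mathcal{L}^*_{A,Q}\overrightarrow{w}+2\lambda\bigl(\omega\cdot\nabla_x-\omega\cdot A\bigr)\overrightarrow{w}.
\]
The $\lambda^2$ contributions cancel: $\partial_t$ produces $+\lambda^2$, since $-\partial_t(e^{-\phi}w)=e^{-\phi}(\lambda^2 w-\partial_t w)$, while $-\Delta_x$ produces $-\lambda^2$. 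The diagonal structure of $A$ means the transport operator acts componentwise, namely $L_v w_i=\omega\cdot\nabla_x w_i-(\omega\cdot\overrightarrow{A}^i)w_i$. The coupling through $Q$ appears only at order $\lambda^0$, so it ends up in the remainder equation and does not affect the transport step.

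Next I would check that the stated $\overrightarrow{T_v}$ satisfies $L_v\overrightarrow{T_v}=\overrightarrow{0}$. Writing $\psi_i(t,x)=\int_0^\infty \omega\cdot\overrightarrow{A}^i(t,x+s\omega)\,ds$, we have $\omega\cdot\nabla_x\psi_i=-\omega\cdot\overrightarrow{A}^i(t,x)$ by the fundamental theorem of calculus along the ray. Here $\overrightarrow{A}^i$ is extended to $\mathbb{R}^{1+n}$ with compact support, using the extension operator $\mathcal{B}$ from Section \ref{Section 3}. Hence $e^{-\psi_i}$ solves the $i$-th transport equation, and the time cut-off $\Xi_\varrho$ and the constants $\widetilde\alpha_i$ commute with $\omega\cdot\nabla_x$. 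The remainder then has to solve
\[
e^{\phi}\mathcal{L}^*_{A,Q}e^{-\phi}\overrightarrow{R_v}=-\mathcal{L}^*_{A,Q}\overrightarrow{T_v}, \qquad \overrightarrow{R_v}(T,\cdot)=\overrightarrow{0}.
\]
Proposition \ref{Proposition 2} gives a solution $\overrightarrow{R_v}\in\mathbf{H}^{2,1}(\Omega_T)$ with $\|\overrightarrow{R_v}\|_{L^2(0,T;\mathbf{H}^2_\lambda)}\le C\|\mathcal{L}^*_{A,Q}\overrightarrow{T_v}\|_{L^2(0,T;\mathbf{H}^1_\lambda)}$. Because $\Xi_\varrho(T)=0$, the full $\overrightarrow{v}$ also vanishes at $t=T$.

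The main work is the bound on the source term. I would expand $\mathcal{L}^*_{A,Q}\overrightarrow{T_v}$ as
\[
-\partial_t\overrightarrow{T_v}-\Delta_x\overrightarrow{T_v}+2A\cdot\nabla_x\overrightarrow{T_v}+\bigl((\nabla_x\cdot A)-A^2+Q^{T}\bigr)\overrightarrow{T_v}.
\]
Only the time derivative of the cut-off is singular, with $\|\Xi_\varrho'\|_\infty\le C\varrho^{-1}$ by \eqref{Xi estimate}. All spatial and temporal derivatives of $e^{-\psi_i}$ up to order three are bounded in $L^\infty$ by a constant depending on $m$. This follows from Sobolev embedding, using $\ell>\frac{n+1}{2}+3$ so that $A\in W^{3,\infty}$ and $Q\in W^{2,\infty}$, together with the boundedness of $\Omega$, which makes the ray integrals effectively over bounded intervals.

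Since $\|f\|_{H^1_\lambda}^2=\lambda^2\|f\|_{L^2}^2+\|f\|_{H^1}^2$, we get $\|\mathcal{L}^*_{A,Q}\overrightarrow{T_v}\|_{L^2(0,T;\mathbf{H}^1_\lambda)}\le C\lambda\varrho^{-1}$. This in turn gives $\sum_{j\le 2}\lambda^{2(2-j)}\|\overrightarrow{R_v}\|^2_{L^2H^j}\le C\lambda^2\varrho^{-2}$. Reading off each $j$ yields $\|\overrightarrow{R_v}\|_{L^2(0,T;\mathbf{H}^k)}\le C\lambda^{-1+k}\varrho^{-1}$ for $k=0,1,2$.

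I expect the hard part to be the honest bookkeeping. One point is that the constant in Proposition \ref{Proposition 2} depends only on $m$ and $\Omega_T$, uniformly in $\lambda$ and $\varrho$. The other is controlling the $H^1$ norm of $\partial_t\overrightarrow{T_v}$, which involves $\partial_t\nabla_x\psi_i$ and therefore needs mixed regularity of the extended $A$. This is exactly where the assumption $\ell>\frac{n+1}{2}+3$ is used.
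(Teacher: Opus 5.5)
Your proposal is correct and follows the paper's construction essentially step for step. You conjugate by $e^{-\phi}$ to isolate the transport operator $L_v=\omega\cdot\nabla_x-\omega\cdot A$, check $L_v\overrightarrow{T_v}=\overrightarrow{0}$, solve for $\overrightarrow{R_v}$ with $\overrightarrow{R_v}(T,\cdot)=\overrightarrow{0}$ via Proposition~\ref{Proposition 2}, and read off the three bounds from $\|\mathcal{L}^*_{A,Q}\overrightarrow{T_v}\|_{L^2(0,T;\mathbf{H}^1_\lambda(\Omega))}\le C\lambda\varrho^{-1}$. Your extra bookkeeping (the $\lambda^2$ cancellation, the transposed potential at order $\lambda^0$, the compactly supported extension of $A$, and the use of $\ell>\frac{n+1}{2}+3$ for the source bound) only makes explicit what the paper leaves implicit.
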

\section{Proof of Theorem \ref{main theorem}}\label{Proof of main theorem}
In this section, we present the proof of Theorem~\ref{main theorem}, which constitutes the main result of this article. The proof relies on an integral identity obtained by combining solutions of the IBVP with those of its corresponding adjoint problem. This identity relates the unknown coefficients to the available partial boundary measurements through the partial DN map.
After establishing the integral identity, we employ the boundary Carleman estimate together with the geometric optics solutions constructed in Section~\ref{Construction of solutions} to recover the unknown coefficients. More precisely, we first derive a stability estimate for the time-dependent convection coefficient. This estimate is then used to establish the stability estimate for the matrix-valued potential, thereby completing the proof of Theorem~\ref{main theorem}.
To this end, let
$\overrightarrow{u}^{(\ell)}\in \mathbf{H}^{2,1}(\Omega_T),
\,
\ell=1,2,$
be the solution of the following IBVP:
    \begin{equation}{\label{PDE corresponding to u_l}}
\left\{\begin{array}{r l c}
\displaystyle \mathcal{L}_{A_{(\ell)},Q_{(\ell)}}\overrightarrow{u}^{(\ell)}(t,x) &= \overrightarrow{0},\quad& (t,x) \in \Omega_T,\\
\overrightarrow{u}^{(\ell)}(0,x)&= \overrightarrow{0}, \quad& x \in \Omega  ,\\
\overrightarrow{u}^{(\ell)}(t,x)&= \overrightarrow{F}(t,x),\quad& (t,x) \in \partial \Omega_T ,
\end{array}\right.
\end{equation}
 where $A_{(\ell)}$ and $Q_{(\ell)}$ are as in the statement of  Theorem \ref{main theorem}. Also, we denote the partial DN map correspond to IBVP~\eqref{PDE corresponding to u_l} by $\Lambda^{\sharp}_{\ell}$ and are given by
 \begin{align}\label{partial dn map for u_l}
 \Lambda^{\sharp}_{\ell}(\overrightarrow{F)}:=\left(\partial_\nu \overrightarrow{u}^{(\ell)} + 2\left(\nu\cdot A\right) \overrightarrow{u}^{(\ell)}\right)\big|_{(0,T)\times \widetilde{F}(\o_0)}
 \end{align}
 for each $\overrightarrow{F}\in \mathbf{H}^{\frac{3}{2},\frac{3}{4}}(\partial \O_T)$.
Next, we  denote
\begin{equation}\label{simplified notation of all vectors}
	\begin{array}{ r l }
 \overrightarrow{u}(t,x) &:= \overrightarrow{u}^{(1)}(t,x) - \overrightarrow{u}^{(2)}(t,x),\\
     A(t,x) &:= A_{(1)}(t,x) - A_{(2)}(t,x), \\
	\widetilde{Q}_{(\ell)}(t,x) &:= - \nabla_x \cdot A_{(\ell)}(t,x)  - (A_{(\ell)}(t,x))^2 + Q_{(\ell)}(t,x),   \\
	\widetilde{Q}(t,x) &:= \widetilde{Q}_{(2)}(t,x) - \widetilde{Q}_{(1)}(t,x).
	\end{array}
 \end{equation}
Now  notice  that  $\overrightarrow{u}\in  \mathbf{H}^{2,1}(\O_T)$  satisfies the following IBVP
\begin{equation}\label{L A1 Q1 in term of A U2}
  \left\{
	\begin{array}{ r l c }
    \mathcal{L}_{A_{(1)},Q_{(1)}}\overrightarrow{u} &= 2A\cdot\nabla_x \overrightarrow{u}^{(2)} + \widetilde{Q}\overrightarrow{u}^{(2)}, \quad &(t,x)\in \Omega_T, \\
	 \overrightarrow{u}(0,x) &=\overrightarrow{0},  \quad &x \in \Omega,  \\
	\overrightarrow{u}(t,x) &= \overrightarrow{0}, \quad &(t,x) \in \partial\Omega_T.
	\end{array}
	\right.
 \end{equation}
Clearly, $2A\cdot\nabla_x \overrightarrow{u}^{(2)} + \widetilde{Q}\overrightarrow{u}^{(2)} \in \textbf{L}^2(\Omega_T)$, therefore using Theorem \ref{existence in forward pde}, we obtain  that there exists a unique solution  $\displaystyle \overrightarrow{u}\in L^{2}(0,T;\mathbf{H}^2(\Omega))\cap H^1(0,T;\mathbf{L}^2(\Omega))$ of IBVP \eqref{L A1 Q1 in term of A U2} with $\partial_\nu \overrightarrow{u}\in L^2(0,T;\mathbf{H}^{1/2}(\partial\Omega)).$
Next, assume that $\overrightarrow{v}^{(1)} \in \mathbf{H}^{2,1}(\O_T)$ is  a solution to the following backward problem: 
\begin{align} \label{adjoint operator with condition on T=0}
   \left\{\begin{array}{r l c}
    \mathcal{L}^*_{A_{(1)},Q_{(1)}} \overrightarrow{v}^{(1)}(t,x)&= \overrightarrow{0}, &(t,x)\in \Omega_T,  \\
      \overrightarrow{v}^{(1)}(T,x) &=\overrightarrow{0},  & x \in \O.
   \end{array}\right.
\end{align}
Now multiply Equation  \eqref{L A1 Q1 in term of A U2} with  $\overline{\overrightarrow{v}^{(1)}}(t,x)$ and  integrating over $\Omega_T$, we have
\begin{align*}
 \int_{\Omega_T} \left(2A\cdot\nabla_x \overrightarrow{u}^{(2)}(t,x) + \widetilde{Q}\overrightarrow{u}^{(2)}(t,x)\right)\cdot \overline{\overrightarrow{v}^{(1)}}(t,x)\ dx dt= \int_{\Omega_T}  \mathcal{L}_{A_{(1)},Q_{(1)}}\overrightarrow{u}(t,x) \cdot \overline{\overrightarrow{v}^{(1)}}(t,x)\ dx dt .
\end{align*}
After using the  integration by parts, together with the fact that $\overrightarrow{u}|_{\partial\Omega_T} = 0, \overrightarrow{u}(0,\cdot) = \overrightarrow{0}$, $A$ vanishing at ${\partial\Omega_T}$ and $\overrightarrow{v}^{(1)}$ satisfies Equation \eqref{adjoint operator with condition on T=0},  the above equation boils down to
 \begin{align}
 \label{equation 54}
 \displaystyle  \underbrace{\int_{\Omega_T} \left(2A\cdot\nabla_x \overrightarrow{u}^{(2)} + \widetilde{Q}\overrightarrow{u}^{(2)}\right)\cdot\overline{\overrightarrow{v}^{(1)}}(t,x)\ dx dt}_{J_1} = - \underbrace{\int_{\partial\O_T}  \partial_{\nu}\overrightarrow{u}(t,x) \cdot\overline{\overrightarrow{v}^{(1)}}(t,x)\ dS_x dt}_{J_2}.
 \end{align}
 \subsection{Stability result for convection term}
In this subsection, we derive the stability estimate for the convection term $A$ using the integral identity given by \eqref{equation 54} and the geometric optics solutions. In particular, we shall start with 
 choosing the GO solutions 
\begin{align}\label{go solution for u2 and v1}
\begin{aligned}
      \overrightarrow{u}^{(2)} = e^{\phi(t,x)}(\overrightarrow{T_u}^{(2)}(t,x) + \overrightarrow{R_u}^{(2)}(t,x;\lambda))\text{  and }
         \overrightarrow{v}^{(1)} = e^{-\phi(t,x)}(\overrightarrow{T_v}^{(1)}(t,x) + \overrightarrow{R_v}^{(1)}(t,x;\lambda)),
         \end{aligned}
\end{align}
where 
\begin{align}\label{tu and tv}
    \begin{aligned}
    \overrightarrow{T_u}^{(2)}&=\begin{bmatrix}
        \alpha_1\Xi_\varrho(t)\left(\dfrac{\xi}{\lvert \xi\rvert}\cdot \nabla_x\Psi_1 \right)e^{\int_{0}^{\infty} \o\cdot \overrightarrow{A}^1_{(2)}(t,x+s\o)ds}\\\vdots   \\ \alpha_k\Xi_\varrho(t)\left(\dfrac{\xi}{\lvert \xi\rvert}\cdot \nabla_x\Psi_k\right)e^{\int_{0}^{\infty} \o\cdot \overrightarrow{A}^k_{(2)}(t,x+s\o)ds}
        \end{bmatrix},\quad 
        \overrightarrow{T_v}^{(1)} = \begin{bmatrix}
        \widetilde{\alpha}_1\Xi_\varrho(t)e^{-\int_{0}^{\infty} \o\cdot \overrightarrow{A}_{(1)}^1(t,x+s\o)ds}\\ \vdots  \\ \widetilde{\alpha}_k\Xi_\varrho(t)e^{-\int_{0}^{\infty} \o\cdot \overrightarrow{A}_{(1)}^k(t,x+s\o)ds}
        \end{bmatrix},
    \end{aligned}
\end{align}
and \(\Psi_i=e^{-\mathrm{i}(t\tau+x\cdot\xi)}e^{\int_{\mathbb{R}}\o\cdot (\overrightarrow{A}_{(1)}^i-\overrightarrow{A}_{(2)}^i)(t,x+s\o)ds}\) for each $1\leq i\leq k$. Also, the correction terms $\overrightarrow{R_u}^{(2)}(t,x;\lambda)$ and
$\overrightarrow{R_v}^{(1)}(t,x;\lambda)$ satisfies the following inequalities
\begin{align} \label{ru and rv bounds}
\begin{aligned}
\|\overrightarrow{R_u}^{(2)}\|_{L^2(0,T;\mathbf{H}^{k}(\O))}&\leq  C\lambda^{-1+k}\varrho^{-1}\langle \tau,\xi\rangle_*^4\quad\text{ and }\quad \|\overrightarrow{R_v}^{(1)}\|_{L^2(0,T;\mathbf{H}^k(\Omega))}&\leq C\lambda^{-1+k}\varrho^{-1},
\end{aligned}
\end{align}
for $ k\in \{0,1,2\}$ and $\langle \tau, \xi\rangle_*:=\sqrt{1+\tau^2+|\xi|^2}$.
Now, substitute the aforementioned GO solution given by \eqref{go solution for u2 and v1}
in the expression of $J_1$, to get
\begin{align}\label{j1 expression}
\begin{aligned}
    J_1=&
    2\int_{\O_T}\sum_{i=1}^{k} \left( \left(\overrightarrow{A}_{(1)}^{i}- \overrightarrow{A}_{(2)}^{i} \right)
    \cdot\left((\nabla_x {T_u}^{(2)})_i\right)\right)\left(( {T_v}^{(1)})_i +\overline{( R_v^{(1)})_i}\right)\ dx dt
    \\&+
    2\int_{\O_T}\sum_{i=1}^{k} \left(\left(\overrightarrow{A}_{(1)}^{i}- \overrightarrow{A}_{(2)}^{i} \right)\cdot (\nabla_x R_u^{(2)})_i\right) \left(( {T_v}^{(1)})_i +\overline{(R_v^{(1)})_i}\right)~   dx dt
    \\&+2\int_{\O_T}\sum_{i=1}^{k} \left(\lambda\left(\o\cdot (\overrightarrow{A}_{(1)}^{i}- \overrightarrow{A}_{(2)}^{i}) \right)(T_u^{(2)})_i\right) \left(( {T_v}^{(1)})_i \right)\ dx dt  \\&+2\int_{\O_T}\sum_{i=1}^{k} \left(\lambda\left(\o\cdot (\overrightarrow{A}_{(1)}^{i}- \overrightarrow{A}_{(2)}^{i}) \right)(T_u^{(2)})_i\right) \left(( \overline{(R_v^{(1)})_i}\right)\ dx dt
    \\&+2\int_{\O_T}\sum_{i=1}^{k} \left(\lambda\left(\o\cdot (\overrightarrow{A}_{(1)}^{i}- \overrightarrow{A}_{(2)}^{i}) \right)(R_u^{(2)})_i\right) \left(( {T_v}^{(1)})_i +\overline{(R_v^{(1)})_i}\right)\ dx dt
    \\&+\int_{\Omega_T} \sum_{i,j=1}^{k} \widetilde{q}_{ji} \left(
    (T_u^{(2)})_i+(R_u^{(2)})_i
    \right)
    \left(( {T_v}^{(1)})_j +\overline{( R_v^{(1)})_j}\right)\ dx dt
\\:\triangleq &~ J_{1,1}+J_{1,2}+J_{1,3}+J_{1,4}+J_{1,5}+J_{1,6},
\end{aligned}
\end{align}
where $A=\diag(\overrightarrow{A}^1_{(1)}-\overrightarrow{A}^1_{(2)}, \cdots, \overrightarrow{A}^k_{(1)}-\overrightarrow{A}^k_{(2)}):=\diag(\overrightarrow{A}^1,\cdots,\overrightarrow{A}^k)$, 
$\overrightarrow{T_u}^{(2)}:=\left((T_u^{(2)})_1,\cdots, (T_u^{(2)})_k \right)$ and $\overrightarrow{R_u}^{(2)}:=\left((R_u^{(2)})_1,\cdots, (R_u^{(2)})_k \right)$.
Next, we first simplify the term $J_{1,3}$.  We make use of the orthogonal decomposition
   $\mathbb{R}^{n}
=
\mathbb{R}\omega
\oplus
\omega^{\perp}$,
where $\omega^{\perp}$ denotes the hyperplane orthogonal to $\omega$. Accordingly, every point $x\in\mathbb{R}^{n}$ can be uniquely written as
\(
   x=\ell+r\omega,\) 
where \(\ell\in\omega^{\perp},\ 
r\in\mathbb{R}. 
\)
Using this decomposition together with the change of variables
$(\ell,r)\mapsto x$, we obtain

\begin{align}\label{j13 value}
    \begin{aligned}
 J_{1,3}&=
2\int_{\O_T}\sum_{i=1}^{k} \left(\lambda\o\cdot \overrightarrow{A}^{i} \right)(T_u^{(2)})_i ({T_v}^{(1)})_i \ dx dt  \\&=2\int_{\mathbb{R}^{1+n}}\sum_{i=1}^{k} \left(\lambda\o\cdot \overrightarrow{A}^{i} \right)(T_u^{(2)})_i ( {T_v}^{(1)})_i \ dx dt  \\& =2\lambda\int_{\mathbb{R}^{1+n}}\sum_{i=1}^{k} (\o\cdot \overrightarrow{A}^{i}) \alpha_i \widetilde{\alpha}_i \Xi^2_\varrho(t)\dfrac{\xi}{\lvert \xi\rvert}\cdot \nabla_x\left(e^{-\mathrm{i}(t\tau+x\cdot\xi)}e^{\int_{\mathbb{R}}\o\cdot \overrightarrow{A}^i(t,x+s\o)ds}\right)e^{-\int_{0}^{\infty} \o\cdot \overrightarrow{A}^i(t,x+s\o)ds}~dxdt\\&=2\lambda \sum_{i=1}^{k} \alpha_i \widetilde{\alpha}_i \int_{\mathbb{R}} \Xi^2_\varrho(t) e^{-\mathrm{i} t \tau} \int_{\omega^{\perp}} 
  \int_{\mathbb{R}}  (\o\cdot \overrightarrow{A}^{i})(t, \ell+r\o)\dfrac{\xi}{\lvert \xi\rvert}\cdot \nabla_x\left(e^{-\mathrm{i}\ell\cdot\xi}e^{\int_{\mathbb{R}}\o\cdot \overrightarrow{A}^i(t,\ell+s\o)ds}\right)\times \\& \hspace{6cm} \left(e^{-\int_{r}^{\infty} \o\cdot \overrightarrow{A}^i(t,\ell+s\o)ds}\right) ~dr d\ell dt,
    \end{aligned}
\end{align}
where $d\ell$ denotes the Lebesgue measure on the hyperplane $\omega^{\perp}$ and  $\xi\in \o^{\perp}$. To simplify the above expression further, we introduce the notation
$\Phi:=e^{-\int_{r}^{\infty} \omega\cdot \overrightarrow{A}^{i}(t,\ell+s\omega) ds}$.
Differentiating $\Phi$ with respect to the variable $r$, we obtain 
 $\partial_{r}\Phi= \o \cdot \overrightarrow{A}^{i}(t,\ell+r \o) \Phi$. Consequently, we have
\begin{align*}
\int_{\mathbb{R}} \partial_{r}\Phi \ dr=\int_{\mathbb{R}} \omega\cdot \overrightarrow{A}^{i}(t,\ell+r \omega) \Phi dr=1-  e^{ -\int_{\mathbb{R}} \omega\cdot \overrightarrow{A}^{i}(t,\ell+r\omega) dr}.
\end{align*}
The use of above expression in Equation~\eqref{j13 value} yields to
\begin{align}
       J_{1,3}&= 2\lambda \sum_{i=1}^{k} \alpha_i \widetilde{\alpha}_i \int_{\mathbb{R}} \Xi^2_\varrho(t) e^{-\mathrm{i} t \tau} \int_{\omega^{\perp}} 
\dfrac{\xi}{\lvert \xi\rvert}\cdot \nabla_x\left(e^{-\mathrm{i}\ell\cdot\xi}e^{\int_{\mathbb{R}}\o\cdot \overrightarrow{A}^i(t,\ell+s\o)ds}\right)(1-  e^{ -\int_{\mathbb{R}} \omega\cdot \overrightarrow{A}^{i}(t,\ell+r\omega) dr}) \,d\ell dt\\&  =-2\lambda \sum_{i=1}^{k} \alpha_i \widetilde{\alpha}_i \int_{\mathbb{R}} \Xi^2_\varrho(t) e^{-\mathrm{i} t \tau} \int_{\omega^{\perp}}e^{-\mathrm{i} \xi\cdot \ell}\dfrac{\xi}{\lvert \xi\rvert} \cdot \nabla^{\perp}\left(\int_{\mathbb{R}} \o\cdot  \overrightarrow{A}^i(t,\ell+s\o)~ds\right)\,d\ell dt\\&=2\lambda \sum_{i=1}^{k} \alpha_i \widetilde{\alpha}_i \int_{\mathbb{R}} \Xi^2_\varrho(t) e^{-\mathrm{i} t \tau}  \int_{\mathbb{R}^n}e^{-\mathrm{i}\xi\cdot x}\dfrac{\xi}{\lvert \xi\rvert}\cdot \nabla_x\left( \o\cdot  \overrightarrow{A}^i(t,x)\right)\,dx  dt,
\end{align}
where $\nabla g:=\nabla^{\perp} g+\o(\o\cdot \nabla g)$. 
Using the properties of the Fourier transform (space-time variable) along with the above expression, we have
\begin{align}\label{j13 first value}
J_{1,3}=\mathrm{i}2\lambda \sum_{i=1}^{k} \alpha_i \widetilde{\alpha}_i \lvert \xi\rvert  \widehat{\Xi^2_\varrho\o\cdot  \overrightarrow{A}^i}(\tau,\xi), \quad \text{for $\xi\in \o^{\perp}$ and $\alpha_i,\widetilde{\alpha_i}\in \mathbb{R}$ $(1\leq i\leq k)$}.
\end{align}
Next, we estimate the remaining terms appearing in Equation~\eqref{j1 expression}. Using the estimates for the correction terms together with the geometric optics solutions, we find that, for sufficiently large $\lambda$,
\begin{align}\label{bound for jij terms}
    \begin{aligned}
       \lvert J_{1,1}\rvert \leq C \langle \tau, \xi\rangle_*^2(1+\varrho^{-1}\lambda^{-1}), \quad  \lvert J_{1,2}\rvert \leq C \varrho^{-1} \langle \tau, \xi\rangle_*^4(1+\varrho^{-1}\lambda^{-1}),\quad
        \lvert J_{1,4}\rvert \leq C  \varrho^{-1}\langle \tau, \xi\rangle_*,\\  \lvert J_{1,5}\rvert\leq C \varrho^{-1}\langle \tau, \xi\rangle_*^4(1+\lambda^{-1} \varrho^{-1}),\quad \lvert J_{1,6}\rvert \leq C(1+\varrho^{-2}) \langle \tau, \xi\rangle_*^4.
    \end{aligned}
\end{align} 
Also, using  Equation \eqref{DN map} and the fact that $A_1=A_2$ on $\partial\O_T$, we obtain 
\begin{align}\label{j2 in sum of front and back}
  & \left\langle\left(\Lambda_{A_{(1)},Q_{(1)}} - \Lambda_{A_{(2)},Q_{(2)}}\right)(\overrightarrow{F}), \overrightarrow{v}^{(1)}|_{\partial\Omega_T}\right\rangle  =  \int_{\partial\Omega_T} \partial_{\nu}\overrightarrow{u}(t,x) \cdot \overline{\overrightarrow{v}^{(1)}}(t,x)\ dS_x dt~ ( =J_2)  \\&\qquad=  \underbrace{\int_{(0,T)\times \widetilde{F}(\o_0)} \partial_{\nu}\overrightarrow{u}(t,x) \cdot \overline{\overrightarrow{v}^{(1)}}(t,x)\ dS_x dt}_{J_{2,1}} + \underbrace{\int_{(0,T)\times \widetilde{B}(\o_0)} \partial_{\nu}\overrightarrow{u}(t,x) \cdot \overline{\overrightarrow{v}^{(1)}}(t,x)\ dS_x dt }_{J_{2,2}}.
\end{align}
By the definitions of the illuminated and shadowed parts of the boundary, for every $\omega\in\mathbb{S}^{n-1}$ satisfying
$|\omega-\omega_0|\le \frac{\varepsilon}{2},$
we have
\begin{align}\label{front and back face relation}
\begin{aligned}
(0,T)\times\widetilde{F}(\omega_0)
\supseteq
(0,T)\times F(\omega)\quad\text{and} \quad
(0,T)\times B(\omega)
\supseteq
(0,T)\times\widetilde{B}(\omega_0).
\end{aligned}
\end{align}
{After applying trace theorem, there exists $\vartheta_1>0$ such that
\begin{align}
    \begin{aligned}
\lVert\overrightarrow{v}^{(1)}\rVert_{\mathbf{L}^2(\partial\O_T)}&\leq C \lVert\overrightarrow{v}^{(1)}\rVert_{L^2(0,T;\mathbf{H}^{1}(\O))}\leq C\varrho^{-1} e^{\vartheta_1\lambda}
    \end{aligned}
\end{align}
where $|e^{-\lambda^2 t}|\leq 1$ for $t\geq 0$. 
Using the aforementioned calculation along with the Cauchy-Schwarz inequality, the trace theorem and the DN map given by Equation~\eqref{partial dn map for u_l}, the upper bound for $J_{2,1}$ can be obtained as 
\begin{align}\label{j21 bound}
    \begin{aligned}
        \lvert J_{2,1} \rvert&\leq \lVert\overrightarrow{v}^{(1)}\rVert_{\textbf{L}^2(\partial\O_T)} \lVert \partial_{\nu}\overrightarrow{u}\rVert_{\textbf{L}^2((0,T)\times \widetilde{F}(\o_0))}\\&\leq C\varrho^{-1} e^{\vartheta_1\lambda}  \lVert (\Lambda_1^{\sharp}-\Lambda_2^{\sharp})\overrightarrow{F}\rVert_{\textbf{L}^2((0,T)\times \widetilde{F}(\o_0))}\\&
        \leq C\varrho^{-1} e^{\vartheta_1\lambda}  \lVert (\Lambda_1^{\sharp}-\Lambda_2^{\sharp})\overrightarrow{F}\rVert_{\mathbf{H}^{\frac{1}{2},\frac{1}{4}}((0,T)\times \widetilde{F}(\o_0))}
        \\& \leq C\varrho^{-1} e^{\vartheta_1\lambda}  \lVert (\Lambda_1^{\sharp}-\Lambda_2^{\sharp})\rVert \lVert \overrightarrow{F} \rVert_{\mathbf{H}^{\frac{3}{2}, \frac{3}{4}}(\partial\O_T)}\\& \leq C\varrho^{-1} e^{\vartheta_1\lambda}  \lVert (\Lambda_1^{\sharp}-\Lambda_2^{\sharp})\rVert \lVert \overrightarrow{u}^{(2)}\rVert_{\mathbf{H}^{2,1}(\O_T)}\\&\leq C\lambda^2 \varrho^{-2} e^{\vartheta_1\lambda+\vartheta_2\lambda^2} \langle \tau,\xi\rangle_*^4 \lVert (\Lambda_1^{\sharp}-\Lambda_2^{\sharp})\rVert \\& \leq C \lambda^2\varrho^{-2}  \langle \tau,\xi\rangle_*^4 e^{\vartheta_2\lambda^2}  \lVert (\Lambda_1^{\sharp}-\Lambda_2^{\sharp})\rVert 
    \end{aligned}
\end{align}
for some $\vartheta_2>0$ and $\lambda$ large enough.

\noindent We next estimate the term $J_{2,2}$. By applying the boundary Carleman estimate \eqref{boundary carleman estimate1}, we obtain the following upper bound:
\begin{align}
    \begin{aligned}
        \lvert J_{2,2}\rvert &\leq C \varrho^{-1} \lVert e^{-\phi} \partial_{\nu} \overrightarrow{u}\rVert_{\textbf{L}^2((0,T)\times \widetilde{B}(\o_0)}\\&\leq C \varrho^{-1} \lVert e^{-\phi} \partial_{\nu} \overrightarrow{u}\rVert_{\textbf{L}^2((0,T)\times B(\o)}\\&
        \leq  \dfrac{C \varrho^{-1}}{\sqrt{\epsilon}}
       \left\langle(\o\cdot \nu(x))e^{-\phi}\partial_\nu \overrightarrow{u},e^{-\phi}\partial_\nu\overrightarrow{u}\right\rangle_{(0,T)\times B(\o)}^{1/2} 
        \\&\leq  \dfrac{C \varrho^{-1}}{\sqrt{\epsilon \lambda}}\left(\|e^{-\phi}\mathcal{L}_{A_{(1)},Q_{(1)}}\overrightarrow{u}\|_{\textbf{L}^2(\Omega_T)} +\sqrt{\lambda} \left\langle(\o\cdot \nu(x))e^{-\phi}\partial_\nu \overrightarrow{u},e^{-\phi}\partial_\nu\overrightarrow{u}\right\rangle_{(0,T)\times F(\o)}^{1/2} \right).
    \end{aligned}
\end{align}
Using IBVP~\eqref{L A1 Q1 in term of A U2} into the above expression and using the inclusions in Equation~\eqref{front and back face relation}, we obtain
\begin{align}\label{j22 bound used later}
    \begin{aligned}
      \lvert J_{2,2}\rvert &\leq  \dfrac{C \varrho^{-1}}{\sqrt{\epsilon \lambda}}\left(\|e^{-\phi}(2A\cdot\nabla_x \overrightarrow{u}^{(2)} + \widetilde{Q}\overrightarrow{u}^{(2)})\|_{\textbf{L}^2(\Omega_T)} +\sqrt{\lambda} \left\langle(\o\cdot \nu(x))e^{-\phi}\partial_\nu \overrightarrow{u},e^{-\phi}\partial_\nu\overrightarrow{u}\right\rangle_{(0,T)\times \widetilde{F}(\o_0)}^{1/2} \right) \\& \leq \dfrac{C \varrho^{-1}}{\sqrt{\epsilon \lambda}}\left(\|e^{-\phi}(2A\cdot\nabla_x \overrightarrow{u}^{(2)} + \widetilde{Q}\overrightarrow{u}^{(2)})\|_{\textbf{L}^2(\Omega_T)} +\sqrt{\lambda}  e^{\vartheta_3\lambda} \lVert \partial_{\nu}\overrightarrow{u}\rVert_{\textbf{L}^2((0,T)\times \widetilde{F}(\o_0))} \right),
    \end{aligned}
\end{align}
for some $\vartheta_3>0$ and $\lambda$ large enough. 
Next, we substitute $\overrightarrow{u}^{(2)}$ given by Equation~\eqref{go solution for u2 and v1} in the above expression and follow a similar analysis performed to obtain bounds in Equations~\eqref{bound for jij terms} and \eqref{j21 bound}. Thus, there exists $\vartheta_2>0$ such that 
\begin{align}\label{j22 bound}
    \begin{aligned}
         \lvert J_{2,2}\rvert  &\leq  \dfrac{C \varrho^{-1}}{\sqrt{\epsilon \lambda}} \left(
\lambda
\varrho^{-1} \langle\tau,\xi\rangle_*^4
+\lambda^{-1}
\varrho^{-1} \langle\tau,\xi\rangle_*^4+ \lambda^{5/2}\varrho^{-1} e^{\vartheta_3\lambda+\vartheta_2\lambda^2}  \lVert (\Lambda_1^{\sharp}-\Lambda_2^{\sharp})\rVert
         \right)\\&\leq C \varrho^{-2}\langle\tau,\xi\rangle_*^4\left(\sqrt{\lambda}+\lambda^2 e^{\vartheta_2\lambda^2} \lVert (\Lambda_1^{\sharp}-\Lambda_2^{\sharp})\rVert
         \right)
    \end{aligned}
\end{align}
holds for $\lambda$ large enough.
Using Equations \eqref{equation 54} and \eqref{j1 expression}-\eqref{j22 bound}, we have 
\begin{align}\label{j13 bound}
    \begin{aligned}
    \lvert J_{1,3}\rvert&=
\left\lvert\int_{\O_T}2\sum_{i=1}^{k} \left(\lambda\left(\o\cdot (\overrightarrow{A}_{(1)}^{i}- \overrightarrow{A}_{(2)}^{i}) \right)(T_u^{(2)})_i\right) \left(( {T_v}^{(1)})_i \right)\ dx dt \right\rvert\\&=
\left\lvert 
2 \lambda \int_{\O_T} \left((\o\cdot A)\cdot \overrightarrow{T_u}^{(2)} \right)\cdot \overrightarrow{T_v}^{(1)}\ dx dt
\right\rvert
\\&\leq C \varrho^{-2}\langle\tau,\xi\rangle_*^4\left(\sqrt{\lambda}+\lambda^2 e^{\vartheta_2\lambda^2} \lVert (\Lambda_1^{\sharp}-\Lambda_2^{\sharp})\rVert
         \right).
    \end{aligned}
\end{align}
After simplification, estimate \eqref{j13 bound} reduces to
\begin{align}\label{5.15}
    \begin{aligned}
        \left\lvert 
 \int_{\O_T} \left((\o\cdot A)\cdot \overrightarrow{T_u}^{(2)} \right)\cdot \overrightarrow{T_v}^{(1)}\ dx dt
\right\rvert &\leq  C \varrho^{-2}\langle\tau,\xi\rangle_*^4\left(\dfrac{1}{\sqrt{\lambda}}+\lambda e^{\vartheta_2\lambda^2}  \lVert (\Lambda_1^{\sharp}-\Lambda_2^{\sharp})\rVert
         \right)\\&\leq C \varrho^{-2}\langle\tau,\xi\rangle_*^4\left(\dfrac{1}{\sqrt{\lambda}}+e^{2\vartheta_2\lambda^2}  \lVert (\Lambda_1^{\sharp}-\Lambda_2^{\sharp})\rVert
         \right).
    \end{aligned}
\end{align}
Also, on dividing Equation~\eqref{j13 first value} by $2\lambda$, we obtain 
\begin{align}
    \begin{aligned}
         \int_{\O_T} \left((\o\cdot A)\cdot \overrightarrow{T_u}^{(2)} \right)\cdot \overrightarrow{T_v}^{(1)}\ dx dt= \mathrm{i} \sum_{i=1}^{k} \alpha_i \widetilde{\alpha}_i \lvert \xi\rvert  \widehat{\Xi^2_\varrho\o\cdot  \overrightarrow{A}^i}(\tau,\xi).
    \end{aligned}
\end{align}
 In particular, taking
\(
     (\alpha_1,\cdots,\alpha_k)=(\widetilde{\alpha}_1,\cdots, \widetilde{\alpha}_k)= \mathbf e_i,
\)
where $\mathbf e_i$ denotes the $i^{th}$ canonical basis vector of $\mathbb{R}^k$, estimate \eqref{5.15} becomes
\begin{align}\label{5.16} \begin{aligned} \left| \,|\xi|\, \widehat{\Xi_\varrho^{2}\,\omega\cdot\overrightarrow{A}^{\,i}} (\tau,\xi) \right| \le C\varrho^{-2} \langle\tau,\xi\rangle_{*}^{4} \left( \frac{1}{\sqrt{\lambda}} + e^{2\vartheta_{2}\lambda^{2}} \|\Lambda_{1}^{\sharp}-\Lambda_{2}^{\sharp}\| \right), \end{aligned} \end{align}
for every $ i\in \{1,\cdots,k\}$ and every $(\tau,\xi)\in\mathbb{R}\times\mathbb{R}^{n}$ satisfying $\o\cdot \xi= 0$.
Now we define some notations and results used in the forthcoming analysis: For a fixed $\o_0 \in \mathbb{S}^{n-1}$ and $\epsilon>0$, we define a spherical patch $\mathfrak{C}_{\o_0}$ around $\o_0$, given by 
\begin{align}
    \mathfrak{C}_{\o_0}:=\left\{ \o\in \mathbb{S}^{n-1}; \lvert \o-\o_0\rvert< \frac{\epsilon}{2}\right\}.
\end{align}
Also, for $\o\in \mathbb{S}^{n-1}$, we denote  the plane passing through origin and perpendicular to $\o$ by $\mathcal{P}_\o$ and   define $\mathcal{P}:=\displaystyle\cup_{\o\in \mathfrak{C}_{\o_0}} \mathcal{P}_\o$. Next, for $(\tau, \xi)\in  \mathbb{R} \times \mathcal{P}$ in estimate~\eqref{5.16}, we choose $\o \in \mathfrak{C}_{\o_0}$ (depending on $\xi)$ with $\o(\xi)\cdot\xi=0$ so that using the properties of Fourier transform we obtain
\begin{align}\label{5.17}
    \begin{aligned}
      \left\lvert  \widehat{\Xi^2_\varrho\partial_j \o(\xi)\cdot  \overrightarrow{A}^i}(\tau,\xi)\right\rvert \leq C \varrho^{-2}\langle\tau,\xi\rangle_*^4\left(\dfrac{1}{\sqrt{\lambda}}+e^{2\vartheta_2\lambda^2}  \lVert (\Lambda_1^{\sharp}-\Lambda_2^{\sharp})\rVert
         \right),
    \end{aligned}
\end{align}
for $j\in \{1,\cdots,n\}$. 
From the aforementioned expression, our objective is to establish the required stability estimate. To this end, we first derive a uniform bound for the Fourier transform of $\Xi^2_\varrho \partial_j\overrightarrow{A}^{i}$, and subsequently for $\Xi^2_\varrho \overrightarrow{A}^{i}$. 
\begin{lemma}\label{Lemma 5.1}
Let $\mathfrak C\subseteq \mathcal P$ be an open cone in $\mathbb R^n$.
Assume that
\(
    \nabla_x\cdot A=[0]_{k\times k}
 \text{ in } \Omega_T.
\)
Then, for every $(\tau,\xi)\in \mathbb R\times \mathfrak C$,
$i\in\{1,\ldots,k\}$, and $j\in\{1,\ldots,n\}$, there exist constants
$C>0$ and $\vartheta_2>0$ such that
\begin{align}\label{eq:Lemma_5_1_estimate}
\left|
\widehat{\Xi_\varrho^2\partial_j\overrightarrow A^{\,i}}(\tau,\xi)
\right|
\le
C\varrho^{-2}\langle\tau,\xi\rangle_*^4
\left(
\frac{1}{\sqrt{\lambda}}
+
e^{2\vartheta_2\lambda^2}
\|\Lambda_1^\sharp-\Lambda_2^\sharp\|
\right).
\end{align}
\end{lemma}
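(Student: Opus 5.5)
The plan is the standard argument of Senapati et al.: pass from the transversal information in \eqref{5.17} to the full Fourier transform of each derivative $\partial_j\overrightarrow A^{\,i}$ by means of the divergence-free condition.

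Fix $i$ and write $\overrightarrow B:=\Xi_\varrho^2\overrightarrow A^{\,i}$, extended by zero outside $\Omega_T$. This extension is legitimate because $A=0$ on $\partial\Omega_T$ and $\Xi_\varrho$ is compactly supported in $(0,T)$. For fixed $(\tau,\xi)\in\mathbb R\times\mathfrak C$ with $\xi\neq 0$, introduce the antisymmetric quantities
\[
\sigma_{jl}:=\partial_j B_l-\partial_l B_j,\qquad \widehat{\sigma_{jl}}(\tau,\xi)=\mathrm{i}\bigl(\xi_j\widehat{B_l}-\xi_l\widehat{B_j}\bigr)(\tau,\xi).
\]
Since $\xi\in\mathcal P$, there is an open set of directions $\omega\in\mathfrak C_{\omega_0}$ with $\omega\cdot\xi=0$. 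Estimate \eqref{5.16} then gives
\[
\bigl||\xi|\,\omega\cdot\widehat{\overrightarrow B}(\tau,\xi)\bigr|\le E:=C\varrho^{-2}\langle\tau,\xi\rangle_*^4\Bigl(\lambda^{-1/2}+e^{2\vartheta_2\lambda^2}\|\Lambda_1^\sharp-\Lambda_2^\sharp\|\Bigr)
\]
for all such $\omega$. Because $\mathfrak C$ is open, the admissible $\omega$ span $\xi^\perp$. In particular, I will choose, as in the scalar case, $\omega=(\xi_j e_l-\xi_l e_j)/|\xi_j e_l-\xi_l e_j|$ (rotated slightly into $\mathfrak C_{\omega_0}$ if necessary, using openness). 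This yields $|\widehat{\sigma_{jl}}(\tau,\xi)|\le C E$ for all $j,l$, with the normalization absorbed into the constant because the directions range over a compact subset of the cone.

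The second ingredient is the divergence-free condition $\nabla_x\cdot\overrightarrow A^{\,i}=0$. Since $\Xi_\varrho$ depends only on $t$, this gives $\xi\cdot\widehat{\overrightarrow B}(\tau,\xi)=0$. Using the identity
\[
\Delta_x B_j=\sum_l\partial_l\sigma_{lj}+\partial_j(\nabla_x\cdot\overrightarrow B)=\sum_l\partial_l\sigma_{lj},
\]
I obtain on the Fourier side $|\xi|^2\widehat{B_j}=-\mathrm{i}\sum_l\xi_l\widehat{\sigma_{lj}}$. Consequently $|\xi|\,|\widehat{B_j}|\le C\sum_l|\widehat{\sigma_{lj}}|\le CE$. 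Finally $\widehat{\partial_j B_l}=\mathrm{i}\xi_j\widehat{B_l}$, so $|\widehat{\partial_j B_l}|\le|\xi|\,|\widehat{B_l}|\le CE$, which is \eqref{eq:Lemma_5_1_estimate} componentwise. I interpret $\partial_j\overrightarrow A^{\,i}$ as the vector of derivatives of the components and sum over them.

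The delicate step is the angular one: making sure that, for every $\xi\in\mathfrak C$, enough directions $\omega\in\mathfrak C_{\omega_0}\cap\xi^\perp$ exist to recover every $\sigma_{jl}$ with a uniform constant. $\xi^\perp$ is $(n-1)$-dimensional, and its intersection with the patch is an open piece of a great sphere, so it spans $\xi^\perp$. The combination coefficients expressing $\xi_je_l-\xi_le_j$ through admissible $\omega$'s can, however, blow up near the boundary of the cone. I would handle this by shrinking $\mathfrak C$ to have compact closure in $\mathcal P$ (projectively), which keeps the constant uniform. A secondary point is that differentiating $\Xi_\varrho^2$ in $x$ produces nothing, so the estimate passes directly to $\Xi_\varrho^2\partial_j\overrightarrow A^{\,i}$.
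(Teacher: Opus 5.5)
Your proof is correct, and its engine is the same as the paper's. Admissible directions $\omega\in\mathfrak C_{\omega_0}\cap\xi^\perp$ span $\xi^\perp$, so \eqref{5.16} controls every transversal component of $\widehat{\Xi_\varrho^2\overrightarrow A^{\,i}}(\tau,\xi)$, and the divergence-free condition removes the component along $\xi$.

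The organisation differs. The paper inverts, at fixed $\xi$, the $n\times n$ matrix $D(\xi)$ with rows $\omega^1(\xi),\dots,\omega^{n-1}(\xi),\xi/|\xi|$, acting directly on $\widehat{\Xi_\varrho^2\partial_j\overrightarrow A^{\,i}}$. You first bound the curl $\widehat{\sigma_{jl}}$ and then recover $\widehat{B_j}$ from $\Delta_xB_j=\sum_l\partial_l\sigma_{lj}$.

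Strictly, the curl step is redundant. Once your spanning argument gives $|u\cdot\widehat{\overrightarrow B}|\le CE/|\xi|$ for all unit $u\in\xi^\perp$, the relation $\xi\cdot\widehat{\overrightarrow B}=0$ yields $|\widehat{\overrightarrow B}|\le\sqrt{n-1}\,\sup_u|u\cdot\widehat{\overrightarrow B}|$ at once. That is the paper's inversion in another guise.

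What your ordering buys is conceptual clarity. The bound on $\sigma_{jl}$ uses no divergence condition: it is the gauge-invariant part of the data, which is why the scalar problem is only determined up to gauge. The divergence-free hypothesis then enters only at the last step. You also make explicit the zero extension across $\partial\Omega_T$ that the paper's Fourier identities use tacitly.

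Your shrinking of $\mathfrak C$ to have compact projective closure in $\mathcal P$ matches what the paper does in effect. Its $\mathfrak C$ is the cone over a small neighbourhood of a fixed $\xi_0$, since near $\partial\mathcal P$ the admissible directions degenerate and no uniform constant is available.

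One correction: the parenthetical ``rotated slightly into $\mathfrak C_{\omega_0}$'' does not work. The direction $(\xi_je_l-\xi_le_j)/|\xi_je_l-\xi_le_j|$ is in general far from $\omega_0$, and rotating it would change the functional being estimated. What you need, and what your last paragraph supplies, is to write $\xi_je_l-\xi_le_j$ as a linear combination of admissible directions in $\xi^\perp$, with coefficients bounded by $C|\xi|$ uniformly on the shrunken cone.
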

\begin{proof}
Fix $i\in\{1,\ldots,k\}$, $j\in\{1,\ldots,n\}$, and
$\xi\in \mathcal P\setminus\{0\}$. For this fixed $\xi$, choose
$n-1$ linearly independent vectors
\begin{align*}
  \omega^\ell(\xi)
=
\bigl(\omega_1^\ell(\xi),\ldots,\omega_n^\ell(\xi)\bigr),
\qquad \ell=1,\ldots,n-1,  
\end{align*}
such that
\begin{align*}
  \omega^\ell(\xi)\in \mathfrak C_{\omega_0},
\qquad
\omega^\ell(\xi)\cdot \xi=0,
\qquad
\ell=1,\ldots,n-1.  
\end{align*}
These vectors span the hyperplane orthogonal to $\xi$. From the previously
established estimate for the projections of
$\widehat{\Xi_\varrho^2\partial_j\overrightarrow A^{\,i}}$ in the directions
$\omega^\ell(\xi)$, we have
\begin{align}\label{eq:omega_projection_estimate}
\left|
\omega^\ell(\xi)\cdot
\widehat{\Xi_\varrho^2\partial_j\overrightarrow A^{\,i}}(\tau,\xi)
\right|
\le
C\varrho^{-2}\langle\tau,\xi\rangle_*^4
\left(
\frac{1}{\sqrt{\lambda}}
+
e^{2\vartheta_2\lambda^2}
\|\Lambda_1^\sharp-\Lambda_2^\sharp\|
\right),
\end{align}
for every $\ell=1,\ldots,n-1$.
For simplicity, define
\begin{align*}
  B_\ell(\tau,\xi)
:=
\omega^\ell(\xi)\cdot
\widehat{\Xi_\varrho^2\partial_j\overrightarrow A^{\,i}}(\tau,\xi).  
\end{align*}
Equivalently, we have
\begin{align}\label{eq:B_ell_definition}
B_\ell(\tau,\xi)
=
\sum_{r=1}^{n}
\omega_r^\ell(\xi)
\widehat{\Xi_\varrho^2\partial_j A_r^{\,i}}(\tau,\xi),
\qquad
\ell=1,\ldots,n-1.
\end{align}
Thus estimate~\eqref{eq:omega_projection_estimate} gives an upper bound for each
$B_\ell$.
 We now use the divergence-free condition. Since
   $ \nabla_x\cdot \overrightarrow A^{\,i}=0$
in  $\Omega_T,$
and since $\Xi_\varrho=\Xi_\varrho(t)$ depends only on the time variable, we
also have
\(
   \nabla_x\cdot
\bigl(\Xi_\varrho^2(t)\overrightarrow A^{\,i}(t,x)\bigr)=0. 
\)
Applying $\partial_j$ to this identity gives
\(
    \nabla_x\cdot
\bigl(\Xi_\varrho^2(t)\partial_j\overrightarrow A^{\,i}(t,x)\bigr)=0.
\)
Taking the Fourier transform with respect to $(t,x)$, we obtain
\begin{align}\label{eq:fourier_divergence_identity}
\mathrm{i}\sum_{r=1}^{n}
\xi_r
\widehat{\Xi_\varrho^2\partial_j A_r^{\,i}}(\tau,\xi)
=0.
\end{align}
Since $\xi\neq0$, we may divide by $|\xi|$. Hence, we have
\begin{align}\label{eq:normalized_divergence_identity}
\sum_{r=1}^{n}
\frac{\xi_r}{|\xi|}
\widehat{\Xi_\varrho^2\partial_j A_r^{\,i}}(\tau,\xi)
=0.
\end{align}
Equations \eqref{eq:B_ell_definition} and
\eqref{eq:normalized_divergence_identity} form a linear system for the
$n$ unknown Fourier components
\begin{align*}
  \widehat{\Xi_\varrho^2\partial_j A_1^{\,i}}(\tau,\xi),
\ldots,
\widehat{\Xi_\varrho^2\partial_j A_n^{\,i}}(\tau,\xi).  
\end{align*}
More precisely, we can write this system as
\begin{align*}
    D(\xi)X(\tau,\xi)=B(\tau,\xi),
\end{align*}
where
\begin{align*}
  D(\xi)=
\begin{bmatrix}
\omega_1^1(\xi)&\omega_2^1(\xi)&\cdots&\omega_n^1(\xi)\\
\omega_1^2(\xi)&\omega_2^2(\xi)&\cdots&\omega_n^2(\xi)\\
\vdots&\vdots&\ddots&\vdots\\
\omega_1^{n-1}(\xi)&\omega_2^{n-1}(\xi)&\cdots&\omega_n^{n-1}(\xi)\\
\dfrac{\xi_1}{|\xi|}&\dfrac{\xi_2}{|\xi|}&\cdots&
\dfrac{\xi_n}{|\xi|}
\end{bmatrix},\quad  
   X(\tau,\xi)=
\begin{bmatrix}
\widehat{\Xi_\varrho^2\partial_j A_1^{\,i}}(\tau,\xi)\\
\widehat{\Xi_\varrho^2\partial_j A_2^{\,i}}(\tau,\xi)\\
\vdots\\
\widehat{\Xi_\varrho^2\partial_j A_n^{\,i}}(\tau,\xi)
\end{bmatrix},
\quad
B(\tau,\xi)=
\begin{bmatrix}
B_1(\tau,\xi)\\
B_2(\tau,\xi)\\
\vdots\\
B_{n-1}(\tau,\xi)\\
0
\end{bmatrix}. 
\end{align*}
The rows of $D(\xi)$ consist of the $n-1$ linearly independent vectors
$\omega^\ell(\xi)$, which are orthogonal to $\xi$, together with the
normal direction $\xi/|\xi|$. Hence, these $n$ vectors are linearly
independent in $\mathbb R^n$. Therefore, $D(\xi)$ is invertible.
Moreover, the matrix $D(\xi)$ is homogeneous of degree zero in $\xi$.
That is,
\(
  D(\rho \xi)=D(\xi),
\text{ for } \rho>0,  
\)
after restricting to the same angular direction. Thus, it is enough to study
$D(\xi)$ for $\xi\in \mathbb S^{n-1}\cap \mathcal P$.

\par Fix $\xi_0\in \mathbb S^{n-1}\cap \mathcal P$. Since $D(\xi_0)$ is
invertible, we have
   $\det D(\xi_0)\neq0$.
By continuity of the determinant map, there exists an open neighbourhood
$\widetilde{\mathfrak C}\subset \mathbb S^{n-1}\cap \mathcal P$ of
$\xi_0$ and a constant $c>0$ such that
\(
    |\det D(\xi)|\ge c,
\;
\xi\in \widetilde{\mathfrak C}.
\)
Consequently, $D(\xi)^{-1}$ is uniformly bounded on
$\widetilde{\mathfrak C}$. Hence there exists $C>0$ such that
\begin{align*}
    \|D(\xi)^{-1}\|\le C,
\qquad
\xi\in \widetilde{\mathfrak C}.
\end{align*}
Therefore, we have
\begin{align*}
   |X(\tau,\xi)|
\le
\|D(\xi)^{-1}\|\,|B(\tau,\xi)|
\le
C|B(\tau,\xi)|,
\qquad
\xi\in \widetilde{\mathfrak C}. 
\end{align*}
Using the bounds for $B_\ell$ from
estimate~\eqref{eq:omega_projection_estimate}, we get
\begin{align*}
    |B(\tau,\xi)|
\le
C\varrho^{-2}\langle\tau,\xi\rangle_*^4
\left(
\frac{1}{\sqrt{\lambda}}
+
e^{2\vartheta_2\lambda^2}
\|\Lambda_1^\sharp-\Lambda_2^\sharp\|
\right).
\end{align*}
Thus, we have
\begin{align*}
    |X(\tau,\xi)|
\le
C\varrho^{-2}\langle\tau,\xi\rangle_*^4
\left(
\frac{1}{\sqrt{\lambda}}
+
e^{2\vartheta_2\lambda^2}
\|\Lambda_1^\sharp-\Lambda_2^\sharp\|
\right).
\end{align*}
Since $X(\tau,\xi)$ is precisely the vector
$\widehat{\Xi_\varrho^2\partial_j\overrightarrow A^{\,i}}(\tau,\xi)$,
thus we have shown that
\begin{align}\label{eq:local_conic_estimate}
\left|
\widehat{\Xi_\varrho^2\partial_j\overrightarrow A^{\,i}}(\tau,\xi)
\right|
\le
C\varrho^{-2}\langle\tau,\xi\rangle_*^4
\left(
\frac{1}{\sqrt{\lambda}}
+
e^{2\vartheta_2\lambda^2}
\|\Lambda_1^\sharp-\Lambda_2^\sharp\|
\right), \quad \text{for all $\xi\in \widetilde{\mathfrak C}$}.
\end{align}
Finally, define the open cone generated by
$\widetilde{\mathfrak C}$ as
\(
    \mathfrak C
:=
\bigcup_{\rho>0}\rho\,\widetilde{\mathfrak C}.
\)
Since $D(\xi)$ is homogeneous of degree zero in $\xi$, the uniform
invertibility obtained on $\widetilde{\mathfrak C}$ extends to the whole
cone $\mathfrak C$. Hence estimate~\eqref{eq:local_conic_estimate} holds for every
$(\tau,\xi)\in \mathbb R\times \mathfrak C$.
This proves estimate~\eqref{eq:Lemma_5_1_estimate} and completes the proof.
\end{proof}
The following lemma from an interpolation argument provides an estimate for the Fourier transform of the localized coefficient $\overrightarrow{A}^{\,i}$ in terms of the difference of the corresponding boundary operators, which will play a crucial role in the proof of the stability result.
\begin{lemma}\label{estimate: interpolation}
    Let $B_R :=\{x\in \mathbb{R}^{1+n},\lvert x\rvert< R\}$ be a open ball of radius $R>0$. Then, for every $R\ge 1$ and $\varrho\in \left(0,\frac{T}{4}\right)$, there exist  constants $C>0$ and $\theta \in (0,1)$, independent of $R$, $\varrho$, and $\lambda$, such that the following estimate holds
    \begin{align}\label{estimate; interpolation}
        \lvert| |\xi|\widehat{\Xi_\varrho^2\overrightarrow{A}^i}\rvert|_{\textbf{L}^{\infty}(B_R)} \leq C e^{R(1-\theta)}\varrho^{-2\theta} R^{4\theta}\left(\dfrac{1}{\sqrt{\lambda}}+e^{2\vartheta_2\lambda^2}  \lVert (\Lambda_1^{\sharp}-\Lambda_2^{\sharp})\rVert
         \right)^{\theta}.
    \end{align}
\end{lemma}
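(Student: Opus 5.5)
This is an analytic-continuation/interpolation argument of Apostol–Vessella type, as used in \cite{Bellassoued_2021, senapati2021stability}. We transfer the estimate available on the open cone $\mathbb{R}\times\mathfrak C$ from Lemma~\ref{Lemma 5.1} to the whole ball $B_R$.

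\textbf{Step 1: the analytic function.} Set $F_i(\tau,\xi):=|\xi|\,\widehat{\Xi_\varrho^2\overrightarrow A^{\,i}}(\tau,\xi)$, or more conveniently the components $\xi_j\widehat{\Xi_\varrho^2 A^i_r}=-\mathrm{i}\,\widehat{\Xi_\varrho^2\partial_j A^i_r}$. Since $\Xi_\varrho^2\overrightarrow A^{\,i}$ is supported in the bounded set $\overline{\Omega_T}$ (extended by zero), Paley--Wiener shows these are entire functions on $\mathbb{C}^{1+n}$. For multi-indices $\gamma$ we have
\[
|\partial^\gamma_{(\tau,\xi)}\widehat{\Xi_\varrho^2\partial_jA^i_r}(\tau,\xi)|\le \int_{\Omega_T}|(t,x)^\gamma|\,|\Xi_\varrho^2\partial_jA^i_r|\,dx\,dt\le C\,\frac{|\gamma|!}{(T_0)^{-|\gamma|}}\,\|A\|_{H^1},
\]
with $T_0\simeq\operatorname{diam}\Omega_T$ (rescaled to $1$). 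Hence $G(z):=F(Rz)$ satisfies $|\partial^\gamma G(z)|\le C\,|\gamma|!\,e^{R}R^{|\gamma|}$ on the unit ball. Here $C$ depends only on $m$, since $\|\Xi_\varrho\|_\infty\le 1$ and $(A_{(i)},Q_{(i)})\in\mathcal A_m$.

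\textbf{Step 2: analytic continuation.} I will use the known result (Apostol/Vessella) in the following form. If $G$ is analytic in $B_{2}\subset\mathbb{R}^{1+n}$ with $|\partial^\gamma G|\le M|\gamma|!\,\eta^{-|\gamma|}$, and $E\subset B_1$ is an open set of positive measure, then
\[
\|G\|_{L^\infty(B_1)}\le C M^{1-\theta}\|G\|_{L^\infty(E)}^{\theta},
\]
with $\theta\in(0,1)$ and $C$ depending only on $\eta$, $|E|$ and $n$. The set $E:=B_1\cap(\mathbb{R}\times\mathfrak C)$ is invariant under dilation, so its measure does not depend on $R$; this is why $\theta$ and $C$ are independent of $R$, $\varrho$ and $\lambda$. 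Take $M=Ce^{R}$. By Lemma~\ref{Lemma 5.1}, on $R\cdot E$ we have $\langle\tau,\xi\rangle_*\le 2R$, so
\[
\|G\|_{L^\infty(E)}\le C\varrho^{-2}R^4\Big(\lambda^{-1/2}+e^{2\vartheta_2\lambda^2}\|\Lambda_1^\sharp-\Lambda_2^\sharp\|\Big).
\]
Combining the two bounds gives
\[
\|F\|_{L^\infty(B_R)}\le Ce^{R(1-\theta)}\varrho^{-2\theta}R^{4\theta}\Big(\lambda^{-1/2}+e^{2\vartheta_2\lambda^2}\|\Lambda_1^\sharp-\Lambda_2^\sharp\|\Big)^{\theta}.
\]
To pass from the components $\xi_j\widehat{\Xi^2A^i_r}$ to $|\xi|\widehat{\Xi^2\overrightarrow A^i}$, observe that $|\xi|\,|v|\le\sum_j|\xi_j v|$.

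\textbf{Main obstacle.} The delicate point is the precise constants in the derivative bounds: they must give $e^{R}$ (not $e^{cR}$, which is absorbed by rescaling $T_0$) uniformly in $\varrho$. The set $E$ must be a genuine fixed open set independent of $R$; this follows from the conic structure of $\mathfrak C$. I would first check that the homogeneity argument in Lemma~\ref{Lemma 5.1} indeed yields a cone of fixed aperture.
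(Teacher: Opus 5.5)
Your proposal is correct and follows the paper's proof essentially step for step. You rescale $\widehat{\Xi_\varrho^2\partial_j\overrightarrow A^{\,i}}$ by $R$, bound its derivatives by $Ce^{R}|\gamma|!\,T_0^{|\gamma|}$, apply Proposition~\ref{real_analytic_function_proposition} on the dilation-invariant set $B_1\cap(\mathbb R\times\mathfrak C)$ with $M\sim e^{R}$, insert Lemma~\ref{Lemma 5.1} using $\langle\tau,\xi\rangle_*\lesssim R$ there, and pass from $\xi_j\widehat{\Xi_\varrho^2 A^i_r}$ to $|\xi|\,\widehat{\Xi_\varrho^2\overrightarrow A^{\,i}}$. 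One small fix is needed in Step~1: the moment bound gives $|\partial^\gamma F|\le C\,T_0^{|\gamma|}$ with no factorial, and both $|\gamma|!$ and $e^{R}$ arise only from $R^{|\gamma|}\le|\gamma|!\,e^{R}$, so the correct estimate is $|\partial^\gamma G|\le Ce^{R}|\gamma|!\,T_0^{|\gamma|}$ (as your choice $M=Ce^{R}$ implicitly assumes), whereas the displayed $C|\gamma|!\,e^{R}R^{|\gamma|}$ would give an $R$-dependent radius of analyticity and hence an $R$-dependent $\theta$.
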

Before proving the previous lemma, we recall a stability estimate for analytic continuation established in \cite[Lemma 3.4]{BellassouedBenAicha2017}.

\begin{proposition}\label{real_analytic_function_proposition}
Let $B_r:=\{x\in \mathbb{R}^{d},\lvert x\rvert< r\}$, and $\gamma \in (\mathbb{N}\cup\{0\})^{d},~d\geq 2$ be a multi-index of length $\lvert \gamma\rvert$. Also, let $g$ be a real analytic function in $B_2$. Assume that there exist constants $M,\rho>0$ such that
\begin{align*}
    \|\partial^\gamma g\|_{L^\infty(B(0,2))}
\leq
\frac{M\,|\gamma|!}{(2\rho)^{|\gamma|}},
\qquad
\gamma\in (\mathbb{N}\cup\{0\})^{d}.
\end{align*}
Then, for every non-empty open set $U\subset B_1$, there exist constants $N=N(\rho)>0$ and $\theta\in(0,1)$, depending only on $d$, $\rho$, and $|U|$, such that
\begin{align*}
  \|g\|_{L^\infty(B(0,1))}
\leq
N\,M^{1-\theta}
\|g\|_{L^\infty(U)}^{\theta}.  
\end{align*}
\end{proposition}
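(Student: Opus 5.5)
The plan is to reduce the statement to a one-variable estimate for holomorphic functions, using the two-constants theorem (harmonic measure), and to obtain the constant $\theta$ uniformly in $x\in B_1$ by compactness. Set $\varepsilon:=\|g\|_{L^\infty(U)}$. Replacing $g$ by $g/M$, we may assume $M=1$; the claim then reads $\|g\|_{L^\infty(B_1)}\le N\varepsilon^{\theta}$, and undoing the normalisation gives exactly the factor $M^{1-\theta}$. Since $U$ is a non-empty open subset of $B_1$, fix a ball $B(x_0,r)\subset U$. The proposition says the constants depend on $|U|$; in practice they depend on $r$ (and on $d$ and $\rho$), so this is where that dependence enters.

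\emph{Step 1 (holomorphic extension).} The derivative bounds $\|\partial^\gamma g\|_{L^\infty(B_2)}\le |\gamma|!\,(2\rho)^{-|\gamma|}$ give convergence of the Taylor series of $g$ at every $y\in B_2$ in the complex polydisc of radius $c\rho$, with $c=c(d)>0$. It is bounded there by a constant $K=K(d,\rho)$: summing $\sum_\gamma |\gamma|!\,|z|^{|\gamma|}/(\gamma!\,(2\rho)^{|\gamma|})$ converges for $|z|_1<2\rho$. Hence $g$ extends holomorphically to the complex neighbourhood
\[
\Sigma:=\{z\in\mathbb{C}^d:\ \operatorname{dist}(z,\overline{B_{3/2}})<c\rho\},
\]
with $|g|\le K$ on $\Sigma$.

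\emph{Step 2 (reduction to one variable).} Fix $x\in B_1$ with $x\neq x_0$, and put $e:=(x-x_0)/|x-x_0|$. Define
\[
h(\zeta):=g(x_0+\zeta e)
\]
for $\zeta$ in the planar domain $D:=\{\zeta\in\mathbb{C}:\ \operatorname{dist}(\zeta,[-r,2])<c'\rho\}$, where $c'=c'(d)>0$ is small enough that $x_0+\zeta e\in\Sigma$ for all $\zeta\in D$. This is possible because $x_0+te\in B_{3/2}$ for $t\in[-r,2]$ near the relevant range; if necessary, shrink the real interval to $[-r,|x-x_0|+\delta]$ while keeping it inside $B_{3/2}$. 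Then $h$ is holomorphic on $D$, $|h|\le K$ on $D$, and $|h|\le\varepsilon$ on the real segment $E:=[0,r]\subset D$, because $x_0+te\in U$ there. The two-constants theorem (subharmonicity of $\log|h|$ compared with the harmonic measure $\omega_{D\setminus E}(\cdot,E)$) yields
\[
|g(x)|=\bigl|h(|x-x_0|)\bigr|\le K^{1-\omega}\varepsilon^{\omega},\qquad \omega:=\omega_{D\setminus E}\bigl(|x-x_0|,E\bigr).
\]
The argument is to be run in the regime $\varepsilon\le K$; otherwise the estimate is trivial with $N=K$, so $\varepsilon^{\omega}\le K^{-\theta}\dots$ is handled by monotonicity below.

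\emph{Step 3 (uniform lower bound for $\omega$ — the main obstacle).} One needs $\theta>0$ with $\omega\ge\theta$ for all $x\in B_1$. Then, in the relevant regime $\varepsilon\le K$, we get $K^{1-\omega}\varepsilon^{\omega}\le K^{1-\theta}\varepsilon^{\theta}$, and the proposition follows with $N:=K^{1-\theta}$. The domain $D$ and the segment $E$ depend only on $(r,\rho,d)$ and not on $x$, because after the rotation $e$ only the one-dimensional geometry remains. The evaluation point $s=|x-x_0|$ ranges over the compact interval $[0,2]\subset D$.
\begin{itemize}
\item The harmonic measure $\omega_{D\setminus E}(\cdot,E)$ is positive and continuous on $D\setminus E$, and it tends to $1$ at $E$, since a segment is a regular compact set.
\item It therefore attains a positive minimum on $[0,2]$. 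This minimum is the desired $\theta=\theta(d,\rho,r)\in(0,1)$.
\end{itemize}
If one prefers to avoid harmonic measure, the fallback is an explicit Hadamard three-circle iteration along a chain of discs of radius $\sim c'\rho$ covering $[0,2]$. Each step costs a fixed exponent, and the number of steps is $O(1/(\rho r))$, which again gives an explicit $\theta$. The case $x=x_0$ is trivial, since then $x\in U$ and $|g(x)|\le\varepsilon$.
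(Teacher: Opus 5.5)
The paper does not prove this proposition; it quotes it from \cite[Lemma 3.4]{BellassouedBenAicha2017}, a result going back to Vessella, so your argument has to stand on its own. Its architecture is reasonable: holomorphic extension to a fixed-width complex neighbourhood from the derivative bounds, restriction to complex lines, and the two-constants theorem. But it proves a weaker statement than the one claimed.

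You fix a ball $B(x_0,r)\subset U$ and obtain $\theta=\theta(d,\rho,r)$, and you explicitly reinterpret ``depending on $|U|$'' as ``depending on $r$''. The proposition asserts constants depending only on $d$, $\rho$ and the Lebesgue measure $|U|$. The inradius of an open set is not controlled by its measure. For example, $U_m:=B_{1/2}\setminus\bigcup_{k\in\mathbb Z}\{x_1=k/m\}$ is open with $|U_m|=|B_{1/2}|$ for every $m$, yet contains no ball of radius larger than $1/(2m)$. So your argument yields no exponent uniform in $m$. This uniformity is exactly the non-trivial content of the quoted lemma. It needs an ingredient absent from your proof: a Remez-type inequality for analytic functions, i.e.\ propagation of smallness from a subset of a segment known only to have positive length.

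You can repair this inside your framework as follows. For $x\in B_1$, write $|U|$ in polar coordinates centred at $x$; only radii $t<2$ occur since $U\subset B_1$. This gives a direction $\sigma$ with $|\{t\in(0,2):\ x+t\sigma\in U\}|\ge |U|/(2^{d-1}|\mathbb S^{d-1}|)$. Restrict $g$ to the chord of $\overline{B_1}$ through $x$ in direction $\sigma$. Pick a piece of that chord of length $\sim\rho$ on which this set has density bounded below, and apply a one-variable Remez-type bound there (in the spirit of Nadirashvili or Kovrijkine). Then reach $x$ by a chain of three-circle steps whose length depends only on $d$ and $\rho$.

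Working on chords of $\overline{B_1}$ also removes a secondary inconsistency in your Steps 2--3. The planar domain $D=\{\operatorname{dist}(\zeta,[-r,2])<c'\rho\}$ is not mapped into $\Sigma$ by $\zeta\mapsto x_0+\zeta e$: for $e=x_0/|x_0|$ one has $|x_0+2e|=|x_0|+2>2$, outside the region where $g$ is even defined. After you shrink the interval to $[-r,|x-x_0|+\delta]$, the domain depends on $x$. The compactness argument for $\min\omega>0$ then no longer applies and must be replaced by a uniform lower bound over this family of domains, e.g.\ domain monotonicity of harmonic measure plus a Harnack chain.

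For the paper's own application, $U=(\mathbb R\times\mathfrak C)\cap B_1$ contains a ball of fixed radius, so your inradius version would suffice there. It is not, however, the proposition as stated.
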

\begin{proof}[Proof of Lemma \ref{estimate: interpolation}]
Fix $j\in \{1,\ldots,n\}$. For $R>0$, define
\begin{align*}
    \overrightarrow{f}^{i}_{R,j}(t,x)
:=
\widehat{\Xi_\varrho^2\partial_j\overrightarrow{A}^{\,i}}
(Rt,Rx),
\qquad (t,x)\in \mathbb{R}^{1+n}.
\end{align*}
Since $\Xi_\varrho^2\partial_j\overrightarrow{A}^{\,i}$ is compactly supported, its Fourier transform is an entire function. Hence, $\overrightarrow{f}^{i}_{R,j}$ is real analytic in $\mathbb{R}^{1+n}$.

Let $\gamma\in (\mathbb N\cup\{0\})^{1+n}$ be an arbitrary multi-index. Differentiating under the integral sign, we obtain
\begin{align*}
\partial^\gamma_{(t,x)}\overrightarrow{f}^{i}_{R,j}(t,x)
&=
\int_{\mathbb R^{1+n}}
e^{-\mathrm{i}R(s,y)\cdot (t,x)}
(-\mathrm{i}R)^{|\gamma|}
(s,y)^\gamma
\bigl(\Xi_\varrho^2\partial_j\overrightarrow{A}^{\,i}\bigr)(s,y)
\,ds\,dy.
\end{align*}
Therefore, we have
\begin{align*}
\bigl|\partial^\gamma_{(t,x)}\overrightarrow{f}^{i}_{R,j}(t,x)\bigr|
&\le
R^{|\gamma|}
\int_{\mathbb R^{1+n}}
|(s,y)|^{|\gamma|}
\bigl|
\bigl(\Xi_\varrho^2\partial_j\overrightarrow{A}^{\,i}\bigr)(s,y)
\bigr|
\,ds\,dy.
\end{align*}
Since the support of
$\Xi_\varrho^2\partial_j\overrightarrow{A}^{\,i}$
is contained in $(0,T)\times \Omega$ and
$\operatorname{diam}(\Omega)<T$, we have
$s^2+|y|^2\le 2T^2$
on the support of the integrand. Consequently, we have
\begin{align*}
\bigl|\partial^\gamma_{(t,x)}\overrightarrow{f}^{i}_{R,j}(t,x)\bigr|
&\le
(2T^2)^{\frac{|\gamma|}{2}}
R^{|\gamma|}
\int_{\mathbb R^{1+n}}
\bigl|
\bigl(\Xi_\varrho^2\partial_j\overrightarrow{A}^{\,i}\bigr)(s,y)
\bigr|
\,ds\,dy.
\end{align*}
Using the a priori bound on $\overrightarrow{A}^{\,i}$, there exists a constant
$C_\ast>0$, independent of $R$ and $\gamma$, such that
\begin{align*}
    \bigl|\partial^\gamma_{(t,x)}\overrightarrow{f}^{i}_{R,j}(t,x)\bigr|
\le
C_\ast (\sqrt{2}T)^{|\gamma|}R^{|\gamma|}.
\end{align*}
Since
\(
    \frac{R^{|\gamma|}}{|\gamma|!}\le e^R
\) for \(R>0,
\)
it follows that
\begin{align}\label{eq:estimate_f}
\bigl|\partial^\gamma_{(t,x)}\overrightarrow{f}^{i}_{R,j}(t,x)\bigr|
\le
C_\ast e^R
\frac{|\gamma|!}{(T^{-1})^{|\gamma|}},
\qquad
(t,x)\in \mathbb R^{1+n}.
\end{align}
Thus, $\overrightarrow{f}^{i}_{R,j}$ satisfies the assumptions of Proposition~\ref{real_analytic_function_proposition}. Applying Proposition~\ref{real_analytic_function_proposition} with
\(
   U=(\mathbb R\times\mathfrak C)\cap B_1, 
\)
we deduce that there exists $\theta\in(0,1)$ such that
\begin{align}\label{eq:estimate_f_B1}
\|\overrightarrow{f}^{i}_{R,j}\|_{\textbf{L}^\infty(B_1)}
\le
C e^{R(1-\theta)}
\|\overrightarrow{f}^{i}_{R,j}\|_{\textbf{L}^\infty((\mathbb R\times\mathfrak C)\cap B_1)}^\theta .
\end{align}
By the definition of $\overrightarrow{f}^{i}_{R,j}$, we have
\begin{align*}
    \|\overrightarrow{f}^{i}_{R,j}\|_{\textbf{L}^\infty(B_1)}
=
\|\widehat{\Xi_\varrho^2\partial_j\overrightarrow{A}^{\,i}}\|_{\textbf{L}^\infty(B_R)}.
\end{align*}
Combining inequality~\eqref{eq:estimate_f_B1} with Lemma~\ref{Lemma 5.1}, we obtain
\begin{align*}
    \|\widehat{\Xi_\varrho^2\partial_j\overrightarrow{A}^{\,i}}\|_{\textbf{L}^\infty(B_R)}
\le
C e^{R(1-\theta)}
\varrho^{-2\theta}
(1+R^2)^{2\theta}
\left(
\frac{1}{\sqrt{\lambda}}
+
e^{2\vartheta_2\lambda^2}
\|\Lambda_1^\sharp-\Lambda_2^\sharp\|
\right)^\theta .
\end{align*}
Since $R\ge 1$, we have
\(
    (1+R^2)^{2\theta}
\le C R^{4\theta}.
\)
Therefore, we have
\begin{align*}
    \bigl\||\xi|
\widehat{\Xi_\varrho^2\overrightarrow{A}^{\,i}}
\bigr\|_{\textbf{L}^\infty(B_R)}
\le
C e^{R(1-\theta)}
\varrho^{-2\theta}
R^{4\theta}
\left(
\frac{1}{\sqrt{\lambda}}
+
e^{2\vartheta_2\lambda^2}
\|\Lambda_1^\sharp-\Lambda_2^\sharp\|
\right)^\theta .
\end{align*}
This completes the proof.
\end{proof}
We now combine the interpolation estimate \eqref{estimate; interpolation}
with the a priori assumptions on the coefficients in order to derive a
stability estimate for the first-order coefficient
$\overrightarrow A^{\,i}$ in terms of the partial DN map.
The main idea is to split the Fourier space into low and high frequencies and
then choose the auxiliary parameters $R$, $\lambda$, and $\varrho$ in a
balanced way.

\noindent
Let
\begin{align}\label{mathcal E in term of dn differnce}
    \mathcal E:=\|\Lambda_1^\sharp-\Lambda_2^\sharp\|.
\end{align}
By Plancherel's theorem, we have
\begin{align}\label{estimate:A_fourier_split}
\|\Xi_\varrho^2\overrightarrow A^{\,i}\|_{L^2(\Omega_T)}^{\frac{2}{\theta}}
&=
\left(
\int_{\mathbb R^{1+n}}
\left|
\widehat{\Xi_\varrho^2\overrightarrow A^{\,i}}(\tau,\xi)
\right|^2
\,d\tau\,d\xi
\right)^{\frac1\theta}
\nonumber\\
&=
\left(
\int_{B_R}
\left|
\widehat{\Xi_\varrho^2\overrightarrow A^{\,i}}(\tau,\xi)
\right|^2
\,d\tau\,d\xi
+
\int_{B_R^c}
\left|
\widehat{\Xi_\varrho^2\overrightarrow A^{\,i}}(\tau,\xi)
\right|^2
\,d\tau\,d\xi
\right)^{\frac1\theta}.
\end{align}
Since $0<\theta<1$, we have $1/\theta>1$. Thus, using the convexity of the function
$x\mapsto x^{1/\theta}$ yields
\begin{align}\label{estimate:I1_I2}
\|\Xi_\varrho^2\overrightarrow A^{\,i}\|_{L^2(\Omega_T)}^{\frac{2}{\theta}}
\le
C\left(I_1^{\frac1\theta}+I_2^{\frac1\theta}\right),
\end{align}
where
\begin{align*}
   I_1:=
\int_{B_R}
\left|
\widehat{\Xi_\varrho^2\overrightarrow A^{\,i}}(\tau,\xi)
\right|^2
\,d\tau\,d\xi,
\qquad
I_2:=
\int_{B_R^c}
\left|
\widehat{\Xi_\varrho^2\overrightarrow A^{\,i}}(\tau,\xi)
\right|^2
\,d\tau\,d\xi. 
\end{align*}
We begin with the estimate of the high-frequency contribution $I_2$.
Since $\langle \tau,\xi\rangle\ge R$ on $B_R^c$, we obtain
\begin{align}\label{estimate:I2}
\begin{aligned}
I_2
&\le
\frac1{R^2}
\int_{\mathbb R^{1+n}}
\langle\tau,\xi\rangle^2
\left|
\widehat{\Xi_\varrho^2\overrightarrow A^{\,i}}(\tau,\xi)
\right|^2
\,d\tau\,d\xi
\le
\frac{C}{R^2}
\|\Xi_\varrho^2\overrightarrow A^{\,i}\|_{H^1(\Omega_T)}^2.
\end{aligned}
\end{align}
Using the a priori regularity of $\overrightarrow A^{\,i}$ and the fact
that differentiation of the cut-off $\Xi_\varrho$ produces factors of
order $\varrho^{-1}$ (see Equation \eqref{Xi estimate}), we deduce that
\(
  \|\Xi_\varrho^2\overrightarrow A^{\,i}\|_{H^1(\Omega_T)}
\le
C\varrho^{-1}.  
\)
Consequently, we have
\begin{align}\label{estimate:I2_final}
I_2
\le
\frac{C}{\varrho^2R^2}.
\end{align}
Next, we estimate the low-frequency part $I_1$. We decompose
\begin{align*}
   I_1=I_{11}+I_{12},
\end{align*}
where
\begin{align*}
   I_{11}
=
\int_{B_R\cap\{|\xi|\le R^{-3/n}\}}
\left|
\widehat{\Xi_\varrho^2\overrightarrow A^{\,i}}(\tau,\xi)
\right|^2
\,d\tau\,d\xi, \quad 
   I_{12}
=
\int_{B_R\cap\{|\xi|>R^{-3/n}\}}
\left|
\widehat{\Xi_\varrho^2\overrightarrow A^{\,i}}(\tau,\xi)
\right|^2
\,d\tau\,d\xi. 
\end{align*}
To estimate $I_{11}$, we use the elementary Fourier bound
\begin{align*}
   \left\|
\widehat{\Xi_\varrho^2\overrightarrow A^{\,i}}
\right\|_{L^\infty(\mathbb R^{1+n})}
\le
\|\Xi_\varrho^2\overrightarrow A^{\,i}\|_{L^1(\Omega_T)}
\le C. 
\end{align*}
Hence, we get
\begin{align}\label{estimate:I11}
\begin{aligned}
I_{11}
&\le
\left\|
\widehat{\Xi_\varrho^2\overrightarrow A^{\,i}}
\right\|_{L^\infty(\mathbb R^{1+n})}^{2}
\int_{-R}^{R}
\int_{|\xi|\le R^{-3/n}}
d\xi\,d\tau
\le
CR\left(R^{-3/n}\right)^n
=
CR^{-2}.
\end{aligned}
\end{align}
For the estimate of $I_{12}$, we use the interpolation estimate \eqref{estimate; interpolation} obtained
earlier. Since $|\xi|>R^{-3/n}$, we have
\begin{align*}
 \left|
\widehat{\Xi_\varrho^2\overrightarrow A^{\,i}}(\tau,\xi)
\right|
\le
R^{3/n}
\left|
|\xi|
\widehat{\Xi_\varrho^2\overrightarrow A^{\,i}}(\tau,\xi)
\right|.   
\end{align*}
Therefore, we have
\begin{align}\label{estimate:I12_start}
I_{12}
&\le
R^{6/n}
|B_R|
\left\|
|\xi|
\widehat{\Xi_\varrho^2\overrightarrow A^{\,i}}
\right\|_{L^\infty(B_R)}^2
\le
CR^{n+1+\frac6n}
\left\|
|\xi|
\widehat{\Xi_\varrho^2\overrightarrow A^{\,i}}
\right\|_{L^\infty(B_R)}^2.
\end{align}
Applying Lemma~\ref{estimate: interpolation}, we conclude that
\begin{align}\label{estimate:I12}
I_{12}
&\le
Ce^{2R(1-\theta)}
\varrho^{-4\theta}
R^{8\theta+n+1+\frac6n}
\left(
\frac1{\sqrt{\lambda}}
+
e^{2\vartheta_2\lambda^2}\mathcal E
\right)^{2\theta}.
\end{align}
Since $0<\theta<1$, the above inequality, after adjusting the constant, can be written
\begin{align}\label{estimate:I12_simplified}
I_{12}^{1/\theta}
&\le
C
e^{\frac{2R(1-\theta)}{\theta}}
\varrho^{-4}
R^{8+\frac{n+1+\frac6n}{\theta}}
\left(
\frac1{\lambda}
+
e^{4\vartheta_2\lambda^2}\mathcal E^2
\right).
\end{align}
Define
\begin{align}\label{eq:alpha_definition}
\kappa
:=
8+\frac{n^2+n+6}{n\theta}.
\end{align}
Then \eqref{estimate:I12_simplified} becomes
\begin{align}\label{estimate:I12_alpha}
I_{12}^{1/\theta}
\le
C
\frac{R^\kappa}{\varrho^4}
e^{\frac{2R(1-\theta)}{\theta}}
\left(
\frac1{\lambda}
+
e^{4\vartheta_2\lambda^2}\mathcal E^2
\right).
\end{align}
Combining Equations \eqref{estimate:I1_I2},
\eqref{estimate:I2_final},
\eqref{estimate:I11},
and \eqref{estimate:I12_alpha}, we arrive at
\begin{align}\label{estimate:A_data_before_choice}
\|\Xi_\varrho^2\overrightarrow A^{\,i}\|_{L^2(\Omega_T)}^{\frac{2}{\theta}}
\le
C
\left[
\frac{R^\kappa}{\lambda\varrho^4}
e^{\frac{2R(1-\theta)}{\theta}}
+
\frac{R^\kappa}{\varrho^4}
e^{\frac{2R(1-\theta)}{\theta}}
e^{4\vartheta_2\lambda^2}\mathcal E^2
+
\frac1{\varrho^{\frac2\theta}R^{\frac2\theta}}
+
R^{-\frac2\theta}
\right].
\end{align}
Since $\Xi_\varrho=1$ away from a $\varrho$-neighborhood of the temporal
endpoints and $\overrightarrow A^{\,i}$ is uniformly bounded a priori, we have
\begin{align}\label{estimate:cutoff_removal}
\|\overrightarrow A^{\,i}\|_{L^2(\Omega_T)}^2
\le
\|\Xi_\varrho^2\overrightarrow A^{\,i}\|_{L^2(\Omega_T)}^2
+
C\varrho.
\end{align}
Raising this estimate to the power $1/\theta$ and using again
$(a+b)^{1/\theta}\le C(a^{1/\theta}+b^{1/\theta})$, we obtain
\begin{align}\label{estimate:A_before_balance}
\|\overrightarrow A^{\,i}\|_{L^2(\Omega_T)}^{\frac2\theta}
\le
C
\left[
\frac{R^\kappa}{\lambda\varrho^4}
e^{\frac{2R(1-\theta)}{\theta}}
+
\frac{R^\kappa}{\varrho^4}
e^{\frac{2R(1-\theta)}{\theta}}
e^{4\vartheta_2\lambda^2}\mathcal E^2
+
\frac1{\varrho^{\frac2\theta}R^{\frac2\theta}}
+
\varrho^{\frac1\theta}
\right].
\end{align}
We now choose the parameters so that the first, third, and fourth terms are
balanced. Take
\begin{align}\label{eq:rho_lambda_choice}
\varrho
=
R^{-2/3},
\qquad
\lambda
=
R^{\kappa+\frac83+\frac{2}{3\theta}}
e^{\frac{2R(1-\theta)}{\theta}}.
\end{align}
With this choice,
\begin{align*}
  \frac{R^\kappa}{\lambda\varrho^4}
e^{\frac{2R(1-\theta)}{\theta}}
=
R^{-\frac{2}{3\theta}}  
\quad \text{
and }\quad 
\frac1{\varrho^{\frac2\theta}R^{\frac2\theta}}
=
\varrho^{\frac1\theta}
=
R^{-\frac{2}{3\theta}}. 
\end{align*}
Therefore, the first, third and fourth terms in
estimate~\eqref{estimate:A_before_balance} are all bounded by
$CR^{-2/(3\theta)}$.

For the remaining term, using Equation~\eqref{eq:rho_lambda_choice}, we can find a
constant $\mu>0$, independent of $R$, such that for all sufficiently
large $R$, such that
\(
   \dfrac{R^\kappa}{\varrho^4}
e^{\frac{2R(1-\theta)}{\theta}}
e^{4\vartheta_2\lambda^2}
\le
e^{e^{\mu R}}. 
\)
Consequently, we have
\begin{align}\label{estimate:A_with_R}
\|\overrightarrow A^{\,i}\|_{L^2(\Omega_T)}^{\frac2\theta}
\le
C
\left(
e^{e^{\mu R}}\mathcal E^2
+
R^{-\frac{2}{3\theta}}
\right).
\end{align}
Assume first that $\mathcal E$ is sufficiently small. We choose
\(
R
=
\frac1\mu
\log |\log \mathcal E|.
\)
Then, we have
\(
 e^{e^{\mu R}}\mathcal E^2
=
e^{|\log \mathcal E|}
\mathcal E^2
=
\mathcal E.   
\)
Substituting this into \eqref{estimate:A_with_R} yields
\begin{align}\label{final norm of A with power theta}
 \|\overrightarrow A^{\,i}\|_{L^2(\Omega_T)}^{\frac2\theta}
\le
C
\left(
\mathcal E
+
|\log|\log \mathcal E||^{-\frac{2}{3\theta}}
\right).   
\end{align}
Subsequently, summing over all indices $1 \leq i\leq k$, and substituting \eqref{mathcal E in term of dn differnce} in above inequality, we achieve
\begin{align}\label{in comparing A, at last, 1 form}
 \| A\|_{\textbf{L}^2(\Omega_T)}
\le
C
\left(
\|\Lambda_1^\sharp-\Lambda_2^\sharp\|^{a_1}
+
|\log|\log \|\Lambda_1^\sharp-\Lambda_2^\sharp\|||^{-a_2}
\right) 
\end{align}
for some $C,a_1$ and $a_2>0$. Moreover, if consider the case when $\mathcal E\ge \mathcal E_0$, where
$\mathcal E_0>0$ is fixed. By the a priori assumptions on the admissible
set, there exists a constant $C(m)>0$ such that
\begin{align*}
 \|\overrightarrow A^{\,i}\|_{L^2(\Omega_T)}^{\frac2\theta}
\le C \leq \dfrac{C}{\mathcal{E}_0} \mathcal{E}_0 \leq \dfrac{C}{\mathcal{E}_0} \mathcal{E},
\end{align*}
that further leads to
\begin{align}\label{in comparing A, at last, 2 form}
    \begin{aligned}
        \| A\|_{\textbf{L}^2(\Omega_T)}\leq \dfrac{C}{\mathcal{E}_0} \|\Lambda_1^\sharp-\Lambda_2^\sharp\|^{\theta/2}.
    \end{aligned}
\end{align}
Following bounds \eqref{in comparing A, at last, 1 form} and \eqref{in comparing A, at last, 2 form}, we conclude that 
\begin{align}
 \| A\|_{\mathbf{L}^2(\Omega_T)}
\le
C
\left(
\|\Lambda_1^\sharp-\Lambda_2^\sharp\|^{a_1}
+
|\log|\log \|\Lambda_1^\sharp-\Lambda_2^\sharp\|||^{-a_2}
\right) 
\end{align}
for some $C,a_1$ and $a_2>0$.
Consequently, we obtain the desired stability result for the zeroth-order coefficient.
\subsection{Stability estimate for matrix-valued potential}
To obtain the stability estimate for the potential term, we  
 choose the GO solutions $\overrightarrow{u}^{(2)}$  and $\overrightarrow{v}^{(1)}$ for the operator $\mathcal{L}_{A_{(2)},Q_{(2)}}$ and $\mathcal{L}^*_{A_{(1)},Q_{(1)}}$ respectively,  of the  form
\begin{align}\label{go solution for u2 and v1 in stability of Q}
\begin{aligned}
      \overrightarrow{u}^{(2)} = e^{\phi(t,x)}(\overrightarrow{T_u}^{(2)}(t,x) + \overrightarrow{R_u}^{(2)}(t,x;\lambda)) \quad\text{ and }\quad
         \overrightarrow{v}^{(1)} = e^{-\phi(t,x)}(\overrightarrow{T_v}^{(1)}(t,x) + \overrightarrow{R_v}^{(1)}(t,x;\lambda)),
         \end{aligned}
\end{align}
where 
\begin{align}\label{tu and tv in stability of Q}
    \begin{aligned}
        \overrightarrow{T_u}^{(2)}&=\begin{bmatrix}
            \alpha_1\Xi_\varrho(t)e^{-\mathrm{i}(t\tau+x\cdot\xi)}e^{\int_{0}^{\infty} \o\cdot \overrightarrow{A}^1_{(2)}(t,x+s\o)ds}\\\vdots   \\ \alpha_k\Xi_\varrho(t)e^{-\mathrm{i}(t\tau+x\cdot\xi)}e^{\int_{0}^{\infty} \o\cdot \overrightarrow{A}^k_{(2)}(t,x+s\o)ds}
        \end{bmatrix}\ \text{ and } \ \overrightarrow{T_v}^{(1)}&= \begin{bmatrix}
    \widetilde{\alpha}_1\Xi_\varrho(t)e^{-\int_{0}^{\infty} \o\cdot \overrightarrow{A}_{(1)}^1(t,x+s\o)ds}\\ \vdots  \\ \widetilde{\alpha}_k\Xi_\varrho(t)e^{-\int_{0}^{\infty} \o\cdot \overrightarrow{A}_{(1)}^k(t,x+s\o)ds}
        \end{bmatrix}
    \end{aligned}
\end{align}
with $\alpha_i,\widetilde{\alpha}_i \in \mathbb{R}$ for $1\leq i\leq k$.
 Also, the correction terms $\overrightarrow{R_u}^{(2)}(t,x;\lambda)$ and
$\overrightarrow{R_v}^{(1)}(t,x;\lambda)$ satisfy
\begin{align} \label{ru and rv bounds for Q}
\begin{aligned}
\|\overrightarrow{R_u}^{(2)}\|_{L^2(0,T;\mathbf{H}^{k}(\O))}&\leq  C\lambda^{-1+k}\varrho^{-1}\langle \tau,\xi\rangle_*^3\quad\text{ and }\quad \|\overrightarrow{R_v}^{(1)}\|_{L^2(0,T;\mathbf{H}^k(\Omega))}&\leq C\lambda^{-1+k}\varrho^{-1},
\end{aligned}
\end{align}
for $ k\in \{0,1,2\}$ and $\langle \tau, \xi\rangle_*:=\sqrt{1+\tau^2+|\xi|^2}$. Next, we substitute the aforementioned GO solutions in integral identity \eqref{equation 54}. We observe that
\begin{align}
    \begin{aligned}
        \lvert J_{1,1}\rvert&=\big\lvert
    2\int_{\O_T}\sum_{i=1}^{k}  \left(\overrightarrow{A}^{i} 
    \cdot\left(\nabla_x {T_u}^{(2)}\right)_i\right)\left(( {T_v}^{(1)})_i +\overline{( R_v^{(1)})_i}\right)\ dx dt\big\rvert
     \leq C \lVert A\rVert_{\textbf{L}^2(\O_T)} \langle \tau, \xi\rangle_*(1+\varrho^{-1}\lambda^{-1}),\\ \lvert J_{1,2}\rvert &=\big\lvert 2\int_{\O_T}\sum_{i=1}^{k} \left(\overrightarrow{A}^{i} \cdot \left(\nabla_x R_u^{(2)}\right)_i\right) \left(\left( {T_v}^{(1)}\right)_i +\overline{(R_v^{(1)})_i}\right)~   dx dt\big\rvert\leq C \lVert A\rVert_{\textbf{L}^2(\O_T)} \varrho^{-1} \langle \tau, \xi\rangle_*^3(1+\varrho^{-1}\lambda^{-1}),\\\lvert J_{1,3}\rvert&=\big\lvert
2\int_{\O_T}\sum_{i=1}^{k} \left(\lambda\o\cdot \overrightarrow{A}^{i} \right)(T_u^{(2)})_i ({T_v}^{(1)})_i \ dx dt\big\rvert \leq C\lambda \lVert A\rVert_{\textbf{L}^2(\O_T)},\\\lvert J_{1,4}\rvert&=\big\lvert
        2\lambda\int_{\O_T}\sum_{i=1}^{k} \left(\left(\o\cdot \overrightarrow{A}^{i} \right)\left(T_u^{(2)}\right)_i\right) \overline{\left( R_v^{(1)}\right)_i}\ dx dt\big\rvert\leq C \lVert A\rVert_{\textbf{L}^2(\O_T)} \varrho^{-1},\\\lvert J_{1,5}\rvert&=\big\lvert2\lambda\int_{\O_T}\sum_{i=1}^{k} \left(\o\cdot \overrightarrow{A}^{i} \right)\left(R_u^{(2)}\right)_i\left(\left( {T_v}^{(1)}\right)_i +\overline{\left(R_v^{(1)}\right)_i}\right)\ dx dt\big\rvert\leq C \lVert A\rVert_{\textbf{L}^2(\O_T)} \varrho^{-1} \langle \tau, \xi\rangle_*^3(1+\varrho^{-1}\lambda^{-1}),
    \end{aligned}
\end{align}
for $\lambda$ large enough. Also, we have
\begin{align}\label{J16 in Q}
    \begin{aligned}
 J_{1,6} &= 
\int_{\Omega_T}\sum_{i,j=1}^{k}\widetilde{q}_{ji} 
    \left(T_u^{(2)}\right)_i \left({T_v}^{(1)}\right)_j\ dx dt+     \int_{\Omega_T}\sum_{i,j=1}^{k} \widetilde{q}_{ji} 
    \left(R_u^{(2)}\right)_i \left(\left( {T_v}^{(1)}\right)_j +\overline{\left( R_v^{(1)}\right)_j}\right)\ dx dt \\&\qquad+    \int_{\Omega_T}\sum_{i,j=1}^{k} \widetilde{q}_{ji} 
    \left(T_u^{(2)}\right)_i \overline{\left( R_v^{(1)}\right)_j}\ dx dt\\ :&\triangleq J_{1,6,1}+J_{1,6,2}+J_{1,6,3}.
\end{aligned}
\end{align} 
From the above expression, we observe that 
\begin{align}\label{j161 in Q}
    \begin{aligned}
J_{1,6,1}&=\int_{\Omega_T}\sum_{i,j=1}^{k}\widetilde{q}_{ji} 
    \left(T_u^{(2)}\right)_i \left({T_v}^{(1)}\right)_j\ dx dt\\&=\sum_{i,j=1}^{k}\int_{\Omega_T}\widetilde{q}_{ji} \alpha_i \widetilde{\alpha}_j
    \Xi^2_\varrho(t)e^{-\mathrm{i}(t\tau+x\cdot\xi)}e^{-\int_{0}^{\infty} \o\cdot \left(\overrightarrow{A}^j_{(1)}-\overrightarrow{A}^i_{(2)}\right)(t,x+s\o)ds} \ dxdt.
    \end{aligned}
\end{align}
Next, we derive suitable upper bounds for the last two terms in Equation~\eqref{J16 in Q}. Using \eqref{tu and tv in stability of Q} and \eqref{ru and rv bounds for Q}, we arrive at 
\begin{align}\label{j161 and j162 in Q}
    \begin{aligned}
        \lvert J_{1,6,2}\rvert = \big\lvert \int_{\Omega_T}\sum_{i,j=1}^{k} \widetilde{q}_{ji} 
    \left(R_u^{(2)}\right)_i \left(\left( {T_v}^{(1)}\right)_j +\overline{\left( R_v^{(1)}\right)_j}\right)\ dx dt \big\rvert\leq C \lambda^{-1}\varrho^{-1}\langle \tau,\xi\rangle_*^3(1+\lambda^{-1}\varrho^{-1}), \text{ and } \\
\lvert J_{1,6,3}\rvert = \big\lvert \int_{\Omega_T}\sum_{i,j=1}^{k} \widetilde{q}_{ji} 
    \left(T_u^{(2)}\right)_i \overline{\left( R_v^{(1)}\right)_j}\ dx dt \big\rvert \leq C \lambda^{-1}\varrho^{-1}.
    \end{aligned}
\end{align}
Also, by looking at the right-hand side of integral identity~\eqref{equation 54}, we have
\begin{align}
    \begin{aligned}
    J_2&=  \int_{(0,T)\times \widetilde{F}(\o_0)} \partial_{\nu}\overrightarrow{u}(t,x) \cdot \overline{\overrightarrow{v}^{(1)}}(t,x)\ dS_x dt +\int_{(0,T)\times \widetilde{B}(\o_0)} \partial_{\nu}\overrightarrow{u}(t,x) \cdot \overline{\overrightarrow{v}^{(1)}}(t,x)\ dS_x dt \\:&\triangleq J_{2,1}+J_{2,2}.
    \end{aligned}
\end{align}
Now, to derive an upper bound for $J_{2,1}$ and $J_{2,2}$, we use a similar analysis to that used to derive the bounds \eqref{j21 bound} and \eqref{j22 bound}. Thus, we have 
\begin{align}\label{j21 in Q}
    \begin{aligned}
        \lvert J_{2,1} \rvert&\leq \lVert\overrightarrow{v}^{(1)}\rVert_{\textbf{L}^2(\partial\O_T)} \lVert \partial_{\nu}\overrightarrow{u}\rVert_{\textbf{L}^2((0,T)\times \widetilde{F}(\o_0))}
       \\&
        \leq C\varrho^{-1} e^{\vartheta_1\lambda}  \mathcal E \lVert \overrightarrow{F} \rVert_{\mathbf{H}^{\frac{3}{2}, \frac{3}{4}}(\partial\O_T)}\\& \leq C\varrho^{-1} e^{\vartheta_1\lambda}  \mathcal E \lVert \overrightarrow{u}^{(2)}\rVert_{\mathbf{H}^{2,1}(\O_T)}\\&\leq C\lambda^2 \varrho^{-2} e^{\vartheta_1\lambda+\vartheta_2\lambda^2} \langle \tau,\xi\rangle_*^3 \mathcal E  \\& \leq C \lambda^2\varrho^{-2}  \langle \tau,\xi\rangle_*^3 e^{\vartheta_2\lambda^2}  \mathcal E,
    \end{aligned}
\end{align}
 and using 
 Carleman estimate \eqref{boundary carleman estimate1} and IBVP~\eqref{L A1 Q1 in term of A U2}, we obtained (see \eqref{j22 bound used later})
\begin{align}\label{j22 bound in Q}
    \begin{aligned}
        \lvert J_{2,2}\rvert &\leq \dfrac{C \varrho^{-1}}{\sqrt{\epsilon \lambda}}\left(\|e^{-\phi}(2A\cdot\nabla_x \overrightarrow{u}^{(2)} + \widetilde{Q}\overrightarrow{u}^{(2)})\|_{\textbf{L}^2(\Omega_T)} +\sqrt{\lambda}  e^{\vartheta_3\lambda} \lVert \partial_{\nu}\overrightarrow{u}\rVert_{\textbf{L}^2((0,T)\times \widetilde{F}(\o_0))} \right)\\& \leq C \varrho^{-2} \langle \tau, \xi\rangle_*^3 \left(\sqrt{\lambda}
         \lVert A\rVert_{\textbf{L}^2(\O_T)}+\dfrac{1}{\sqrt{\lambda}}+ e^{\vartheta_2\lambda^2} \mathcal E 
        \right)
    \end{aligned}
\end{align}
for some $\vartheta_1,\vartheta_2,\vartheta_3>0$ and $\lambda$ large enough. 
From Equations \eqref{J16 in Q} to \eqref{j22 bound in Q}, we have
\begin{align}\label{j161 bound in Q}
    \begin{aligned}
        \lvert J_{1,6,1}\rvert  \leq C \varrho^{-2} \langle \tau, \xi\rangle_*^3 \left(\sqrt{\lambda}
         \lVert A\rVert_{\textbf{L}^2(\O_T)}+\dfrac{1}{\sqrt{\lambda}}+ e^{\vartheta_2\lambda^2} \mathcal E 
        \right).
    \end{aligned}
\end{align}
Now choose  $\alpha=\mathbf e_i $ and $\widetilde{\alpha}= \mathbf e_j$, then using 
\eqref{j161 in Q} and \eqref{j161 bound in Q}, we obtain 
\begin{align}\label{requied in uniqueness}
    \begin{aligned}
     &\left \lvert   \int_{\Omega_T}\widetilde{q}_{ji} 
    \Xi^2_\varrho(t)e^{-\mathrm{i}(t\tau+x\cdot\xi)}e^{-\int_{0}^{\infty} \o\cdot \left(\overrightarrow{A}^j_{(1)}-\overrightarrow{A}^i_{(2)}\right)(t,x+s\o)ds}\ dx dt \right\rvert \\& \qquad \qquad\leq C \varrho^{-2} \langle \tau, \xi\rangle_*^3 \left(\sqrt{\lambda}
         \lVert A\rVert_{\textbf{L}^2(\O_T)}+\dfrac{1}{\sqrt{\lambda}}+ e^{\vartheta_2\lambda^2} \mathcal E 
        \right),\quad \text{for $(\tau,\xi)\in \mathbb{R}\times \mathcal{P}$}.
    \end{aligned}
\end{align}
Moreover, for every $(\tau,\xi)\in \mathbb{R}\times \mathcal{P}$, we observe that
\begin{align}\label{condition on matrix A required for estimating Q}
    \begin{aligned}
 \lvert\mathcal{F}(\Xi_\varrho^2 \widetilde{q}_{ji})(\tau,\xi)\rvert&= \left\lvert \int_{\Omega_T}\widetilde{q}_{ji} 
    \Xi^2_\varrho(t)e^{-\mathrm{i}(t\tau+x\cdot\xi)}\ dx dt\right\rvert\\&\leq    \left\lvert   \int_{\Omega_T}\widetilde{q}_{ji} 
    \Xi^2_\varrho(t)e^{-\mathrm{i}(t\tau+x\cdot\xi)}e^{-\int_{0}^{\infty} \o\cdot \left(\overrightarrow{A}^j_{(1)}-\overrightarrow{A}^i_{(2)}\right)(t,x+s\o)ds}\ dx dt \right\rvert\\&\qquad+ \left\lvert   \int_{\Omega_T}\widetilde{q}_{ji} 
    \Xi^2_\varrho(t)e^{-\mathrm{i}(t\tau+x\cdot\xi)}\left(e^{-\int_{0}^{\infty} \o\cdot \left(\overrightarrow{A}^j_{(1)}-\overrightarrow{A}^i_{(2)}\right)(t,x+s\o)ds}-1\right)\ dx dt\right\rvert\\&\leq C \varrho^{-2} \langle \tau, \xi\rangle_*^3 \left(\sqrt{\lambda}
         \lVert A\rVert_{\textbf{L}^2(\O_T)}+\dfrac{1}{\sqrt{\lambda}}+ e^{\vartheta_2\lambda^2} \mathcal E 
        \right)\\&\qquad +C \int_{\O_T} \int_{0}^\infty 
        \lvert \left(\overrightarrow{A}^j_{(1)}-\overrightarrow{A}^i_{(2)} \pm 
        \overrightarrow{A}^j_{(2)}
        \right)\rvert(t,x+s\o)\ ds dxdt
        \\&\leq C \varrho^{-2} \langle \tau, \xi\rangle_*^3 \left(\sqrt{\lambda}
         \lVert A\rVert_{\textbf{L}^2(\O_T)}+\dfrac{1}{\sqrt{\lambda}}+ e^{\vartheta_2\lambda^2} \mathcal E 
        \right)+ C\lVert A \rVert_{\textbf{L}^2(\O_T)}\\&\leq C \varrho^{-2} \langle \tau, \xi\rangle_*^3 \left(\sqrt{\lambda}
         \lVert A\rVert_{\textbf{L}^2(\O_T)}+\dfrac{1}{\sqrt{\lambda}}+ e^{\vartheta_2\lambda^2} \mathcal E 
        \right).
    \end{aligned}
\end{align}
Now if we 
implement Lemma \ref{estimate; interpolation}
to the function $ \mathcal{F}(\Xi_\varrho^2 \widetilde{q}_{ji})$, then for $R\geq 1$ and $\varrho\in(0,T/4)$, there exists $\theta \in(0,1)$ such that
\begin{align}\label{estimate; qji l infinity}
    \begin{aligned}
        \left\lVert \mathcal{F}(\Xi_\varrho^2 \widetilde{q}_{ji})\right\rVert_{L^{\infty}(B_R)} \leq Ce^{R(1-\theta)}\varrho^{-2\theta} R^{3\theta} \left(\sqrt{\lambda}
         \lVert A\rVert_{\textbf{L}^2(\O_T)}+\dfrac{1}{\sqrt{\lambda}}+ e^{\vartheta_2\lambda^2} \mathcal E 
        \right)^{\theta}.
    \end{aligned}
\end{align}
To derive the stability estimate for the matrix-valued potential, we proceed in a manner analogous to the proof for the convection coefficient. By Plancherel's theorem, we obtain
\begin{align}\label{estimate:A_fourier_split in Q}
\|\Xi_\varrho^2 \widetilde{q}_{ji}\|_{L^2(\Omega_T)}^{\frac{2}{\theta}}
&=
\left(
\int_{\mathbb R^{1+n}}
\left|
\mathcal{F} \left(\Xi_\varrho^2 \widetilde{q}_{ji}\right) (\tau,\xi)
\right|^2
\,d\tau\,d\xi
\right)^{\frac1\theta}
\nonumber\\
&=
\left(
\int_{B_R}
\left|
\mathcal{F} \left(\Xi_\varrho^2 \widetilde{q}_{ji}\right) (\tau,\xi)
\right|^2
\,d\tau\,d\xi
+
\int_{B_R^c}
\left|
\mathcal{F} \left(\Xi_\varrho^2 \widetilde{q}_{ji}\right) (\tau,\xi)
\right|^2
\,d\tau\,d\xi
\right)^{\frac1\theta}\\&\leq  C\left(
\left(\int_{B_R}
\left|
\mathcal{F} \left(\Xi_\varrho^2 \widetilde{q}_{ji}\right)(\tau,\xi)
\right|^2
\,d\tau\,d\xi\right)^{\frac{1}{\theta}}+\left(\int_{B_R^c}
\left|
\mathcal{F} \left(\Xi_\varrho^2 \widetilde{q}_{ji}\right)(\tau,\xi)
\right|^2
\,d\tau\,d\xi
\right)^{\frac{1}{\theta}}\right).
\end{align}
Using the same argument as in the derivation of \eqref{estimate:I2}, we obtain
\begin{align}\label{estimate:I2_final in Q}
\int_{B_R^c}
\left|
\mathcal{F} \left(\Xi_\varrho^2 \widetilde{q}_{ji}\right)(\tau,\xi)
\right|^2
\,d\tau\,d\xi
\le
\frac{C}{\varrho^2R^2}.
\end{align}
Also, using estimate~\eqref{estimate; qji l infinity}, we have 
\begin{align}
\int_{B_R}
\left|
\mathcal{F} \left(\Xi_\varrho^2 \widetilde{q}_{ji}\right)(\tau,\xi)
\right|^2
\,d\tau\,d\xi
&\le
Ce^{2R(1-\theta)}\varrho^{-4\theta} R^{1+n+6\theta} \left(\sqrt{\lambda}
         \lVert A\rVert_{\textbf{L}^2(\O_T)}+\dfrac{1}{\sqrt{\lambda}}+ e^{\vartheta_2\lambda^2} \mathcal E 
        \right)^{2\theta}.
\end{align}
Using the fact that $0<\theta<1$ and adjusting the constant if necessary, the above estimate becomes
\begin{align}\label{estimate:I12_simplified in Q}
\begin{aligned}
\left(\int_{B_R}
\left|
\mathcal{F} \left(\Xi_\varrho^2 \widetilde{q}_{ji}\right)(\tau,\xi)
\right|^2
\,d\tau\,d\xi\right)^{1/\theta}
&\le
C
\left(e^{\frac{2R(1-\theta)}{\theta}}
\varrho^{-4}
R^{6+\frac{n+1}{\theta}}\right)
\left(
\lambda \lVert A\rVert_{\textbf{L}^2(\O_T)}^2+\dfrac{1}{\lambda}
+
e^{2\vartheta_2\lambda^2}\mathcal E^2
\right).
\end{aligned}
\end{align}
Now, if we denote
\(
\kappa
:=
6+\frac{n+1}{\theta},
\)
then inequality \eqref{estimate:I12_simplified in Q} becomes
\begin{align}\label{estimate:I12_alpha in Q}
\left(\int_{B_R}
\left|
\mathcal{F} \left(\Xi_\varrho^2 \widetilde{q}_{ji}\right)(\tau,\xi)
\right|^2
\,d\tau\,d\xi\right)^{1/\theta}
\le
C
\frac{R^\kappa}{\varrho^4}
e^{\frac{2R(1-\theta)}{\theta}}
\left(
\lambda \lVert A\rVert_{\textbf{L}^2(\O_T)}^2+\dfrac{1}{\lambda}
+
e^{2\vartheta_2\lambda^2}\mathcal E^2
\right).
\end{align}
Combining Equations~\eqref{estimate:A_fourier_split in Q}, \eqref{estimate:I2_final in Q},
and \eqref{estimate:I12_alpha in Q}, we conclude that
\begin{align}\label{estimate:A_data_before_choice in Q}
\|\Xi_\varrho^2\widetilde{q}_{ji}\|_{L^2(\Omega_T)}^{\frac{2}{\theta}}
\le
C
\left[
\frac{\lambda R^\kappa}{\varrho^4}
e^{\frac{2R(1-\theta)}{\theta}}\lVert A\rVert_{\textbf{L}^2(\O_T)}^2
+
\frac{R^\kappa}{\varrho^4}
e^{\frac{2R(1-\theta)}{\theta}}
e^{2\vartheta_2\lambda^2}\mathcal E^2+\frac{R^\kappa}{\lambda\varrho^4}
e^{\frac{2R(1-\theta)}{\theta}}
+
\frac1{\varrho^{\frac2\theta}R^{\frac2\theta}}
\right].
\end{align}
Thus, a upper bound for $\widetilde{q}_{ji}$ in $L^2$-norm is given by 
\begin{align}\label{estimate:A_before_balance in Q}
\hspace{-.7cm}\|\widetilde{q}_{ji} \|_{L^2(\Omega_T)}^{\frac2\theta}
\le
C
\left[
\frac{\lambda R^\kappa}{\varrho^4}
e^{\frac{2R(1-\theta)}{\theta}}\lVert A\rVert_{\textbf{L}^2(\O_T)}^2
+
\frac{R^\kappa}{ \varrho^4}
e^{\frac{2R(1-\theta)}{\theta}}
e^{2\vartheta_2\lambda^2}\mathcal E^2+\frac{R^\kappa}{\lambda\varrho^4}
e^{\frac{2R(1-\theta)}{\theta}}
+
\frac1{\varrho^{\frac2\theta}R^{\frac2\theta}}+\varrho^{\frac{1}{\theta}}
\right].
\end{align}
We now choose the parameters so that the last three terms in the above inequality are comparable. Take
\(
\varrho
=
R^{-2/3}
\) and \(
\lambda
=
R^{\kappa+\frac83+\frac{2}{3\theta}}
e^{\frac{2R(1-\theta)}{\theta}}.
\)
With this choice, we have
\begin{align*}
  \frac{R^\kappa}{\lambda\varrho^4}
e^{\frac{2R(1-\theta)}{\theta}}
=
R^{-\frac{2}{3\theta}}  
  \text{ and } \frac1{\varrho^{\frac2\theta}R^{\frac2\theta}}
=
\varrho^{\frac1\theta}
=
R^{-\frac{2}{3\theta}}. 
\end{align*}
Therefore, the last three terms in
estimate~\eqref{estimate:A_before_balance in Q} are bounded by
$CR^{-2/(3\theta)}$. Now, on combining the above expression with Equation \eqref{final norm of A with power theta}, we obtain
\begin{align}
\|\widetilde{q}_{ji} \|_{L^2(\Omega_T)}^{\frac2\theta}
\le
C
\left(e ^{\mu_2 R}\mathcal E^{\theta}
+
e ^{\mu_2 R}|\log|\log \mathcal E||^{-\frac{2}{3}}+e^{e^{\mu_2 R}}\mathcal{E}^{2}+R^{-\frac{2}{3 \theta}}
\right),
\end{align}
for $\mathcal{E}$ small enough and for some constant $\mu_2>0$. Now, if we choose $R= \frac{1}{3\mu_2}\log \log \lvert \log \mathcal{E}\rvert$, the foregoing expression reduces to
\begin{align}
        \|\widetilde{q}_{ji} \|_{L^2(\Omega_T)}^{\frac2\theta}
&\le
C
\left(\mathcal E^{\theta} (\log \lvert \log \mathcal{E}\rvert)^{\frac{1}{3}} 
+
(\log|\log \mathcal E|)^{-\frac{1}{3}}+\mathcal{E}^{2}( \lvert \log \mathcal{E}\rvert)^{\frac{1}{3}} +(3 \mu_2)^{\frac{2}{3 \theta}}(\log \log \lvert \log \mathcal{E}\rvert)^{-\frac{2}{3 \theta}}
\right)\nonumber\\&\leq C
\left(\mathcal E^{\theta} (\log \lvert \log \mathcal{E}\rvert)^{\frac{1}{3}} 
+
(\log|\log \mathcal E|)^{-\frac{1}{3}}+\mathcal{E}^{2} \lvert \log \mathcal{E}\rvert +(\log \log \lvert \log \mathcal{E}\rvert)^{-\frac{2}{3 \theta}}
\right),\label{5.48}
\end{align}
 for $\mathcal E$ small enough. Now, to reduce the right-hand side of the above expression, we further impose more smallness on $\mathcal{E}$. In particular, we choose $\mathcal{E}$ small enough such that following holds: 
 \begin{itemize}
 \item[(a)] $\mathcal E^{\theta} \leq C (\log \lvert \log \mathcal{E}\rvert)^{-2/3}$ ,
  \item[(b)] $(\log \lvert \log \mathcal{E}\rvert)^{-1/3} \leq  (\log \log \lvert \log \mathcal{E}\rvert)^{-1/3} $,
     \item[(c)] $\mathcal{E} \lvert \log \mathcal{E}\rvert \leq C$.
 \end{itemize}
The use of above assumptions in Equation \eqref{5.48} yields
\begin{align}
    \begin{aligned}
            \|\widetilde{q}_{ji} \|_{L^2(\Omega_T)}^{\frac2\theta}
&\le
C 
\left( (\log \log \lvert \log \mathcal{E}\rvert)^{-\frac{1}{3}} 
+\mathcal{E} +(\log \log \lvert \log \mathcal{E}\rvert)^{-\frac{2}{3 \theta}}
\right).
    \end{aligned}
\end{align}
After simplification, we get
\begin{align}\label{final norm for q tilde}
    \begin{aligned}
            \|\widetilde{q}_{ji} \|_{L^2(\Omega_T)}
&\le
C 
\left( \mathcal{E}^{\frac{\theta}{2}}+(\log \log \lvert \log \mathcal{E}\rvert)^{-\frac{\theta}{6}} 
\right)
    \end{aligned}
\end{align}
for all $1\leq i,j\leq k$.
Now, if we denote $Q=Q_{(2)}-Q_{(1)}$, then from the notations given by \eqref{simplified notation of all vectors}, we have
\begin{align*}
    Q=\widetilde{Q}+A^2_{(1)}-A^2_{(2)}+\nabla_x\cdot A.
\end{align*}
	The component-wise comparison will give us 
    \begin{align}
        q_{ij}= \left\{
	\begin{array}{ r l  }
\widetilde{q}_{ij},& \text{ if }  \ i\neq j,\\
    \widetilde{q}_{ii}+\nabla_x\cdot \overrightarrow{A}^{i}+ (\overrightarrow{A}^i_{(1)})^2-(\overrightarrow{A}^i_{(2)})^2,& \text{ if } \ i=j.
\end{array}
\right.
    \end{align}
    Thus, we have 
    \begin{align}\label{qji norm in term of qji tilde and A}
        \begin{aligned}
            \lVert q_{ij}\rVert_{L^2(\O_T)}\leq    \lVert \widetilde{q}_{ij}\rVert_{L^2(\O_T)}+C\lVert \overrightarrow{A}^i\rVert_{L^2(0,T;H^1(\O))}.
        \end{aligned}
    \end{align}
    The higher regularity corresponds to the first order perturbation along with the logarithmic convexity for Sobolev norm leads to the following: There exist a constant $C>0$ and $\sigma \in(0,1)$ such that 
    \begin{align*}
        \lVert\overrightarrow{A}^i\rVert_{H^1(\O_T)}\leq C \lVert\overrightarrow{A}^i\rVert^\sigma_{L^2(\O_T)}.
    \end{align*}
  By combining Equations \eqref{final norm of A with power theta}, \eqref{final norm for q tilde}, and \eqref{qji norm in term of qji tilde and A} with the preceding inequality, and subsequently summing over all indices $1 \leq i,j \leq k$, we obtain
  \begin{align}
      \begin{aligned}
           \lVert Q\rVert_{\textbf{L}^2(\O_T)}\leq C\left(
\mathcal{E}^{a_3}+\lvert \log \lvert\log\lvert \log \mathcal{E}\rvert\rvert\rvert^{-a_4}
           \right),
      \end{aligned}
  \end{align}
  for some $C,a_3$ and $a_4>0$. Thus, we conclude that  \begin{align}
      \begin{aligned}
           \lVert Q\rVert_{\textbf{L}^2(\O_T)}\leq C\left(
\lVert\Lambda_1^\sharp-\Lambda_2^\sharp\rVert^{a_3}+\lvert \log \lvert\log\lvert \log \lVert\Lambda_1^\sharp-\Lambda_2^\sharp\rVert\rvert\rvert\rvert^{-a_4}
           \right),
      \end{aligned}
  \end{align}
  for some $C,a_3$ and $a_4>0$. This completes the proof.
  \begin{proof}[Proof of Corollary \ref{Uniqueness for main PDE}]
   Through this corollary, we claim that if we drop the additional assumption \eqref{extra condition imposed on A} from the hypotheses of Theorem \ref{main theorem}, even then the uniqueness is obtained. 
To see this, observe first that the condition \eqref{extra condition imposed on A} was not used in the derivation of the stability estimate for $A:=A_{(1)}-A_{(2)}$. Therefore, inserting $\Lambda^\sharp_1=\Lambda^\sharp_2$ into Equation \eqref{in comparing A, at last, 1 form} yields $A_{(1)}=A_{(2)}$. The condition \eqref{extra condition imposed on A} is invoked for the first time only in Equation \eqref{condition on matrix A required for estimating Q}. Hence, prior to that, in \eqref{requied in uniqueness} we may substitute $A_{(1)}=A_{(2)}$ together with $\Lambda^\sharp_1=\Lambda^\sharp_2$, which gives
\begin{align*}
  \left \lvert   \int_{\Omega_T}\widetilde{q}_{ji} 
    \Xi^2_\varrho(t)e^{-\mathrm{i}(t\tau+x\cdot\xi)}e^{-\int_{0}^{\infty} \o\cdot \left(\overrightarrow{A}^j_{(1)}-\overrightarrow{A}^i_{(2)}\right)(t,x+s\o)ds}\, dx\, dt \right\rvert  \leq C \varrho^{-2} \langle \tau, \xi\rangle_*^3 \left(\dfrac{1}{\sqrt{\lambda}}
        \right).  
\end{align*}
Letting $\lambda\to\infty$ we deduce that
\begin{align*}
    \mathcal{F}\!\left(
q_{ji}(t,x) 
    \Xi^2_\varrho(t)e^{-\int_{0}^{\infty} \o\cdot \left(\overrightarrow{A}^j_{(1)}-\overrightarrow{A}^i_{(2)}\right)(t,x+s\o)ds}
      \right)=0
\quad\text{for all }\tau \in \mathbb{R},\ \xi\in \o^\perp,
\end{align*}
where $\o\in \mathbb{S}^{n-1}$ satisfies $\lvert \o -\o_0 \rvert\leq\dfrac{\epsilon}{2}$. Here
\begin{align*}
    q_{ji}(t,x) 
    \Xi^2_\varrho(t)e^{-\int_{0}^{\infty} \o\cdot \left(\overrightarrow{A}^j_{(1)}-\overrightarrow{A}^i_{(2)}\right)(t,x+s\o)ds}
    \in L^{\infty}(\O_T),
\end{align*}
and this function is extended by zero outside $\O_T$. By the Paley-Wiener theorem, it follows that
\begin{align*}
   q_{ji}(t,x) 
    \Xi^2_\varrho(t)e^{-\int_{0}^{\infty} \o\cdot \left(\overrightarrow{A}^j_{(1)}-\overrightarrow{A}^i_{(2)}\right)(t,x+s\o)ds}=0, 
\end{align*}
for $(t,x)\in \O_T$. 
 This holds for every choice of $\Xi(t)\in C_c^{\infty}(0,T)$, and the exponential factor is non-vanishing; hence we conclude that $q_{ji}=0$ in $\O_T$. Since this argument is valid for all $1\leq i,j\leq k$, we obtain $q_{ji}=0$ for all $1\leq i,j\leq k$, and therefore $Q_{(1)}=Q_{(2)}$ in $\O_T$.
  \end{proof}

\section*{Acknowledgments}\label{sec:acknowledgements}
Parveen Kumar acknowledges the financial support provided by the Council of Scientific and Industrial Research (CSIR), India, through the Senior Research Fellowship (SRF), Grant No. 09/1005(19269)/2024-EMR-I. The authors acknowledge the Department of Mathematics, Indian Institute of Technology Ropar, for providing research facilities supported under the DST-FIST programme (Reference No. SR/FST/MS-I/2018/22(C)).  The authors express their sincere gratitude to Dr.~Manmohan Vashisth for his numerous insightful discussions and constructive suggestions, which have substantially enhanced the overall quality of this work.

\bibliography{references}
\bibliographystyle{alpha}

\end{document}